\numberwithin{equation}{section}
\theoremstyle{plain}
\newtheorem{theorem}[equation]{Theorem}  
\newtheorem{lemma}[equation]{Lemma}       
\newtheorem{proposition}[equation]{Proposition}      
\newtheorem{corollary}[equation]{Corollary}
\newtheorem{sublemma}[equation]{Sublemma}
\theoremstyle{remark}
\newtheorem*{remark*}{Remarks}  
\newtheorem{example}[equation]{Example}
\newtheorem*{example*}{Example}
\theoremstyle{definition}
\newtheorem{definition}[equation]{Definition}
\newcommand{\al}{\alpha}
\newcommand{\cont}{\operatorname{cont}}
\newcommand{\C}{{\mathcal C}}
\newcommand{\Confdim}{\mathop{\mathrm{Confdim}}\nolimits}
\newcommand{\D}{\partial}
\newcommand{\De}{\Delta}
\newcommand{\diam}{\mathop{\mathrm{diam}}\nolimits}
\newcommand{\dist}{\mathop{\mathrm{dist}}\nolimits}
\newcommand{\inte}{\mathop{\mathrm{int}}\nolimits}
\newcommand{\eps}{\epsilon}
\newcommand{\Ga}{\Gamma}
\newcommand{\ga}{\gamma}
\newcommand{\hr}{\mathbb{R}}
\newcommand{\hh}{\mathbb{H}}
\newcommand{\hn}{\mathbb{N}}
\newcommand{\Mod}{\mathop{\mathrm{Mod}}\nolimits}
\newcommand{\ra}{\rightarrow}
\newcommand{\R}{\mathbb{R}}
\newcommand{\restr}{\mbox{\Large \(|\)\normalsize}}
\newcommand{\sep}{\operatorname{sep}}
\renewcommand{\S}{\mathbb{S}}
\newcommand{\T}{\mathcal{T}}
\newcommand{\val}{\mathop{\mathrm{val}}\nolimits}
\newcommand{\Z}{\mathbb{Z}}
\newcommand{\la}{\lambda}
\newcommand{\spt}{\operatorname{Spt}}
\newcommand{\Lip}{\operatorname{Lip}}
\begin{document}

\title[Some applications of $\ell_p$-cohomology]{Some applications of $\ell_p$-cohomology to boundaries of Gromov
hyperbolic spaces}

\author{ Marc Bourdon and Bruce Kleiner} 
\maketitle

\begin{abstract}
We study quasi-isometry invariants of Gromov hyperbolic spaces, focussing  on the 
$\ell_p$-cohomology and closely related invariants such as the conformal dimension,
combinatorial modulus, and the Combinatorial Loewner Property.    We give  new
constructions of continuous $\ell_p$-cohomology, thereby obtaining
information about the $\ell_p$-equivalence relation, as well
as critical exponents associated
with $\ell_p$-cohomology.   As an application, we provide a flexible construction
of hyperbolic groups which do not have the Combinatorial Loewner Property, 
extending \cite{B2} and
complementing the examples from \cite{BourK}.  Another consequence is the existence of
hyperbolic groups with  Sierpinski carpet boundary which have conformal dimension 
arbitrarily close to $1$.  In particular, we
answer questions of Mario Bonk,  Juha Heinonen and John Mackay.
\end{abstract}


\setlength{\parskip}{\smallskipamount}

\tableofcontents

\setlength{\parskip}{\medskipamount}

\section{Introduction}
\subsection*{Background}
In this paper we will be interested in quasi-isometry invariant structure in Gromov
hyperbolic spaces, primarily  structure which is reflected in the boundary.   For some
hyperbolic groups $\Gamma$, the topological
structure of the boundary $\D \Gamma$ alone contains substantial information:
witness the JSJ decomposition encoded in the local cut point structure of the boundary
\cite{bowditchlocalcutpoints}, and many  situations where one can detect 
boundaries of certain
subgroups $H\subset \Gamma$ by means of topological criteria. 
However, in many cases, for instance for generic hyperbolic groups, the topology 
reveals little of the structure of the group and is completely inadequate for 
addressing rigidity questions, since the homeomorphism group
of the boundary is highly transitive.    In these cases it is necessary to use the 
finer quasi-M\"obius structure of the boundary and analytical invariants attached to it,
such as  modulus (Pansu, metric-measure, or combinatorial), $\ell_p$-cohomology,
and closely related quantities such as the conformal dimension.  The seminal
work of Heinonen-Koskela \cite{HeK},  followed by 
\cite{cheeger,khshanmuga,Ty,keith} indicates that when 
$\D \Gamma$ is quasi-M\"obius homeomorphic to a Loewner space
(an Ahlfors $Q$-regular $Q$-Loewner space in the sense of \cite{HeK}),
there should be a rigidity theory resembling that of lattices in rank
$1$ Lie groups.   This possibility is illustrated by \cite{BP1,xie}. 

It remains unclear which hyperbolic groups $\Gamma$ have Loewner boundary
in the above sense.
Conjecturally, $\D \Gamma$ is Loewner 
if and only if it satisfies the Combinatorial Loewner
Property \cite{K}.   To provide some evidence of the abundance of such groups
(modulo the conjecture),
in \cite{BourK} we gave a variety of examples with the Combinatorial Loewner Property.
On the other hand, it had already been shown in \cite{BP2,B2} that there are
groups $\Gamma$ whose boundary is not Loewner, which can still be 
effectively studied using  $\ell_p$-cohomology and its cousins.   
Our main purpose in this paper  is to advance the understanding of this 
complementary situation  by providing new constructions of $\ell_p$-cohomology
classes, and giving a number of applications.

\subsection*{Setup}
We will restrict our attention (at least in the introduction) to proper
Gromov hyperbolic spaces which satisfy the following two additional conditions:
\begin{itemize}
\item (Bounded geometry) \; For every $0<r \le R$, there is 
a $N(r,R) \in \mathbb N$ such that every
$R$-ball can be covered by at most $N=N(r,R)$ balls
of radius $r$.
\item (Nondegeneracy) \;There is a $C\in [0,\infty)$ such that every point $x$ lies within 
distance at most $C$ from  all three sides of some ideal geodesic triangle $\De_x$.
\end{itemize}
The visual boundary $\partial X$ of such a space $X$
is a compact, doubling, uniformly perfect metric space, which is
determined 
up to  quasi-M\"obius homeomorphism by the quasi-isometry class of $X$.  
Conversely, every  compact, doubling, uniformly perfect metric space 
is the visual boundary of  a unique hyperbolic metric space as above, 
up to quasi-isometry (see Section \ref{preliminaries}).  

To simplify the discussion of homological properties,   
we will impose (without
loss of generality) the additional standing assumption
 that $X$ is a simply connected metric simplicial complex with links
of uniformly bounded complexity, and with all simplices isometric 
to regular Euclidean simplices with unit length edges.

\subsection*{Quasi-isometry invariant function spaces}
Let $X$ be a Gromov hyperbolic simplicial complex as above, with 
boundary $\D X$.   

We recall  (see Section \ref{equivalence} and \cite{Pa1,G,E,B2}) that 
for $p\in (1,\infty)$,  the  continuous (first) 
$\ell_p$-cohomology $\ell_pH^1_{\cont}(X)$  
is canonically isomorphic 
to the space  $A_p(\D X)/ \mathbb R$, 
where $A_p(\D X)$ is the space of 
continuous functions 
$u:\D X\ra \R$ that have a continuous extension
$f:X^{(0)}\cup \D X\ra \R$ with  $p$-summable
coboundary:
$$
\|d f\|^p_{\ell_p}=\sum_{[vw]\in X^{(1)}} |f(v)-f(w)|^p\; <\infty\,,
$$
and where $\mathbb R$ denotes the subspace of constant functions.
Associated with the continuous $\ell_p$-cohomology are several other
quasi-isometry invariants:
\begin{enumerate}
\item The $\ell_p$-equivalence relation $\sim_p$ on $\D X$, where $z_1\sim_p z_2$
iff $u(z_1)=u(z_2)$ for every $u\in A_p(\D X)$.    
\item The infimal  $p$ such that $\ell_pH^1_{\cont}(X)\simeq A_p(\D X)/ \mathbb R$
is nontrivial.   We will denote this by $p_{\neq 0}(X)$.   Equivalently $p_{\neq 0}(X)$ is the infimal $p$
such that $\sim_p$ has more than one coset.
\item  The infimum $p_{\sep}(X)$ of the $p$ such that 
$A_p(\D X)$ separates points in $\D X$, or equivalently, $p_{\sep}(X)$ is
the infimal $p$ such that all
cosets of $\sim_p$ are points.
\end{enumerate}
These invariants were exploited in 
\cite{BP2,B2,BourK} due to their connection with conformal dimension
and the Combinatorial Loewner Property.  Specifically, when 
$\D X$ is approximately self-similar (e.g.\! if $\D X$
is the visual boundary of a hyperbolic group)  then  $p_{\sep}(X)$
coincides with the Ahlfors regular conformal dimension 
of $\D X$; and if $\D X$ has the Combinatorial Loewner Property then the two 
critical exponents  $p_{\neq 0}(X)$ and $p_{\sep}(X)$ coincide, i.e.\! for every $p\in (1,\infty)$, the function space
$A_p(\D X)$ separates points iff it is nontrivial  (we refer the reader
to Sections \ref{preliminaries}--\ref{cohoboundary} for the precise statements and
the relevant terminology).

\subsection*{Construction of nontrivial continuous $\ell_p$-cohomology}
The key results in this paper are constructions of nontrivial elements in the $\ell_p$-cohomology.
The general approach for the construction is inspired by 
\cite{B2}, and may be described as follows.  
Inside the Gromov hyperbolic complex
$X$, we identify a subcomplex $Y$ such that the relative
cohomology of the pair $(X, X\setminus Y)$ reduces -- essentially by excision -- to 
the cohomology of $Y$ relative to its frontier in $X$.    Then we prove that the latter
contains an abundance of nontrivial classes.   This yields nontrivial classes in 
$\ell_pH^1_{\cont}(X)$ with additional control, allowing us to make deductions about
the cosets of the $\ell_p$-equivalence relation.

Let $Y$  be a Gromov hyperbolic space satisfying our standing assumptions.
We recall \cite{E} that if  $Y$
has Ahlfors $Q$-regular visual boundary $\D Y$, then $A_p(\D Y)$ contains the 
Lipschitz functions on $\D Y$ for any $p>Q$, and in particular it separates points.   
Our first result says that if $W\subset Y$ is a subcomplex with well-separated connected 
components, then for $p$ slightly larger than $Q$,
the relative cohomology $\ell_pH^1_{\cont}(Y,W)$ is highly 
nontrivial.   In other words, at the price of increasing the exponent slightly, one
can arrange for the representing functions $f:Y^{(0)}\ra \R$ to be constant
on the ($0$-skeleton of the) connected components of $W$, provided the 
components are far apart. 

\begin{theorem}[Corollary \ref{corqualitative}]\label{thm-qualitative_hyperbolic}
Let $Y$ be hyperbolic simply connected metric simplicial complex satisfying the  assumptions above,
and let $\Confdim (\partial Y)$
denote the Ahlfors regular conformal dimension of $\partial Y$.

For every $\alpha \in (0,1)$, $C \ge 0$, there is a $D \ge 1$
with the following properties.
Suppose $W\subset Y$ is a subcomplex of $Y$
such that
\begin{enumerate}
\item Every connected component of $W$ is  $C$-quasiconvex in $Y$.
\item For every $y \in W$ there is a
complete geodesic $\ga\subset W$ lying in the same connected component of 
$W$, such that $\dist(y, \ga) \le C$.
\item The distance between distinct components of $W$ is at least $D$.
\end{enumerate} 
Then for  $p > \frac{1}{\alpha} \Confdim (\partial Y)$,  the relative cohomology 
$\ell_pH^1_{\cont}(Y,W)$ separates every pair of distinct points $z_1, z_2 \in \partial Y$,
such that $\{z_1, z_2 \} \nsubseteq \partial H$ for every connected component $H$ of $W$.
\end{theorem}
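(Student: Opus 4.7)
The plan is to produce, for each admissible pair $(z_1,z_2)$, a continuous function $\tilde u : \partial Y \to \R$ which lies in $A_p(\partial Y)$, is constant on $\partial H$ for every connected component $H$ of $W$, and satisfies $\tilde u(z_1)\neq \tilde u(z_2)$; a continuous extension to $Y^{(0)}\cup\partial Y$ that is constant on each such $H$ then represents the desired class in $\ell_p H^1_{\cont}(Y,W)$.

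First I would fix an Ahlfors $Q$-regular visual metric $d$ on $\partial Y$ with $\Confdim(\partial Y) < Q < \alpha p$, which is possible since $\alpha p>\Confdim(\partial Y)$. Each $C$-quasiconvex component $H$ has a well-defined compact visual boundary $\partial H\subset \partial Y$, and the geometric hypotheses together with bounded geometry should imply that, for $D$ sufficiently large in terms of $\alpha,C$ and the constants of $Y$, the sets $\partial H$ corresponding to distinct components are uniformly $d$-separated.  Next I would form the quotient $Z=\partial Y/\sim$, collapsing each $\partial H$ to a point, and equip it with the pseudometric
$$\rho(\pi(z),\pi(z'))\; =\; \inf\sum_{i=1}^n d(y_i,y_i')^\alpha,$$
where the infimum runs over chains $z=y_0,\; y_0'\sim y_1,\ldots, y_n'=z'$. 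The separation statement should force $\rho$ to be a genuine metric with $\rho(\pi(z),\pi(z'))\asymp d(z,z')^\alpha$ whenever $z\not\sim z'$. Since $\{z_1,z_2\}\nsubseteq\partial H$ for every $H$, the images $\pi(z_1),\pi(z_2)$ are distinct. Taking $u_0:Z\to\R$ to be an elementary $1$-Lipschitz cutoff in $(Z,\rho)$ separating $\pi(z_1),\pi(z_2)$ (for instance a truncated $\rho$-distance to $\pi(z_1)$) and setting $\tilde u=u_0\circ\pi$ yields a continuous function on $\partial Y$ that is constant on each $\partial H$, separates $z_1$ from $z_2$, and satisfies $|\tilde u(z)-\tilde u(z')|\lesssim d(z,z')^\alpha$ whenever $z\not\sim z'$.

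To verify $\tilde u\in A_p(\partial Y)$ I would appeal to the Besov-type characterization of $A_p$ for Ahlfors $Q$-regular boundaries, according to which $\tilde u\in A_p(\partial Y)$ as soon as
$$\int_{\partial Y}\int_{\partial Y}\frac{|\tilde u(z)-\tilde u(z')|^p}{d(z,z')^{2Q}}\,dz\,dz'\;<\;\infty.$$
The H\"older bound reduces this to the integrability of $d(z,z')^{\alpha p-2Q}$ against the $Q$-regular product measure, which holds precisely because $\alpha p>Q$. A standard averaging extension of $\tilde u$ to $Y^{(0)}$ can then, using hypothesis (2) to control the interior of $W$, be taken constant on each component $H$, producing the desired separating class in $\ell_p H^1_{\cont}(Y,W)$.

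The principal obstacle is the geometric lemma behind the first step: one must convert the abstract separation hypothesis (3) into a \emph{quantitative} metric separation of the boundaries $\partial H$ in $(\partial Y,d)$, uniformly in the configuration of $W$, so that both the equivalence $\rho\asymp d^\alpha$ off the diagonal and the H\"older estimate on $\tilde u$ survive the collapsing with constants independent of how many components are present. Quasiconvexity (1), the uniform geodesic filling (2), and bounded geometry all enter the calibration of $D=D(\alpha,C)$.
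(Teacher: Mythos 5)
Your overall scheme — pass to an Ahlfors $Q$-regular metric $\delta$ with $Q<\alpha p$, produce a function on $\partial Y$ which is $\alpha$-H\"older with respect to $\delta$ and constant on every $\partial H$ separating $z_1$ from $z_2$, then extend and invoke Elek-type (or equivalently Besov-type) embedding — matches the paper's strategy in its broad contours. Where you diverge is in how you manufacture the H\"older function: the paper reduces the statement to a self-contained theorem about metric spaces (Theorem \ref{thm-holderfunction}), proved by an inductive "correction" construction $u=\sum_j v_j$ with $v_k$ adjusting the partial sum to be constant on the scale-$2^{-k}$ collection $\C_k$, with the separation hypothesis entering through a lemma controlling how often corrections can interact (Lemma \ref{lem-supportseparation}); you instead propose to collapse each $\partial H$ to a point, endow the quotient with the $\alpha$-snowflaked chain pseudometric $\rho$, and take a truncated $\rho$-distance function.

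The difficulty is that the entire technical content of the theorem is hiding in your unsupported assertion that the chain pseudometric satisfies $\rho(\pi(z),\pi(z'))\asymp d(z,z')^\alpha$ whenever $z\not\sim z'$. That $\rho\lesssim d^\alpha$ is automatic (use the trivial one-step chain); the reverse inequality is exactly the point. It is a priori completely unclear why chains which hop through an infinite, densely nested family of collapsed sets cannot collapse $\rho$ to something much smaller than $d^\alpha$ — indeed the paper's own Remark after Theorem \ref{thm-holderfunction} notes that $\cup\C$ can have \emph{full measure}, and that no Lipschitz ($\alpha=1$) function can then separate anything, which shows the inequality genuinely depends on $\alpha<1$ and on the relative-distance gap $D$. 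Such a lower bound on $\rho$ is logically equivalent to a uniform, quantitative version of Theorem \ref{thm-holderfunction} (a family of $\alpha$-H\"older functions with \emph{uniform constant} whose differences pinch $d^\alpha$ from below), so you have not simplified or bypassed the paper's main lemma — you have restated it as a "should" and left it unproved. To repair the gap you would essentially have to reproduce the inductive argument of Section 4, carefully tracking that the total correction $\sum_{k\ge 0}\|v_k\|_{C^0}$ is small compared to an initial separation of size $d(z_1,z_2)$; this is doable but is where the work lives.

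Two secondary imprecisions. First, you write that the sets $\partial H$ are "uniformly $d$-separated" for $D$ large; this is false as stated in an absolute sense, since distinct components of $W$ (and hence their limit sets) can accumulate at a boundary point and have arbitrarily small diameters. What Lemma \ref{lemmaqualitative} actually provides, and what both the paper's Theorem \ref{thm-holderfunction} and your chain construction genuinely require, is a lower bound on the \emph{relative} distance $\Delta(\partial H_1,\partial H_2)=\dist(\partial H_1,\partial H_2)/\min(\diam\partial H_1,\diam\partial H_2)$. Second, the extension to $Y^{(0)}$ that is constant on each $H$ is not a generic averaging extension; it uses hypothesis (2) to guarantee that every vertex $y\in W$ is within uniformly bounded distance of a geodesic ray hitting a prescribed value, and it is this which makes the extended $f$ both well defined on $W$ and continuous up to the boundary. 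These points are easily fixed but should be made explicit.
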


We prove Theorem \ref{thm-qualitative_hyperbolic} by translating it to 
an existence theorem for functions on $\D Y$.   It then reduces to the 
following
theorem about H\"older functions, which is of  independent interest:

\begin{theorem}[Theorem \ref{thm-holderfunction}]
\label{thm-intro_holderfunction}
For every $\al\in (0,1)$, there is a $D \ge 1$ with the following property.  
Suppose
$Z$ is a  bounded metric space, 
and $\C$ is a countable collection of closed positive diameter subsets of $Z$   
such that
the pairwise relative distance satisfies 
$$
\Delta(C_1,C_2) =\frac{d(C_1,C_2)}{\min(\diam(C_1),\diam(C_2))}\ge D
$$
for all $C_1,C_2\in \C$, $C_1\neq C_2$.   Then for every pair of distinct
points $z_1, z_2 \in Z$, 
either  $\{z_1, z_2 \} \subseteq C$ for some $C \in \mathcal C$, or
there is a H\"older function $u\in C^\al(Z)$, 
such that: 
\begin{enumerate}
\item  $u\restr_C$ is 
constant for every $C\in \C$.
\item If $C_1,C_2\in\C$ and  $u(C_1)= u(C_2)$, then $C_1=C_2$.
\item $u(z_1) \neq u(z_2)$. 
\end{enumerate}
\end{theorem}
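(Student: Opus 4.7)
The plan is to reduce the problem to a question on a quotient of $Z$. By the separation hypothesis the members of $\mathcal{C}$ are pairwise disjoint, so the equivalence relation $\sim$ on $Z$ defined by $z \sim z'$ iff $z = z'$ or $\{z,z'\} \subseteq C$ for some $C \in \mathcal{C}$ has as classes the $C \in \mathcal{C}$ together with the singletons $\{z\}$ for $z \notin \bigcup \mathcal{C}$. Let $\pi : Z \to \tilde{Z} := Z/{\sim}$ be the quotient and define the chain pseudo-metric
\[
\tilde\rho(\pi(z),\pi(z')) \;:=\; \inf \sum_{i=0}^{k} d(x_i, y_i)^\alpha,
\]
where the infimum ranges over all chains $z = x_0, y_0, x_1, y_1, \ldots, x_k, y_k = z'$ with $y_i \sim x_{i+1}$ for every $i < k$. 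Taking $k = 0$ gives $\tilde\rho(\pi(z),\pi(z')) \le d(z,z')^\alpha$, so every $1$-Lipschitz function on $(\tilde{Z},\tilde\rho)$ pulls back via $\pi$ to an $\alpha$-H\"older function on $Z$, of H\"older constant at most $1$, that is automatically constant on each $C \in \mathcal{C}$.

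The crux, and the place where the hypothesis on $D$ is used, is the following.

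\textbf{Key Lemma.} \emph{For $D = D(\alpha)$ sufficiently large, $\tilde\rho(\pi(z),\pi(z')) > 0$ whenever $\pi(z) \ne \pi(z')$.}

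I expect this to be the main obstacle. The intended argument: given a chain of cost at most $\epsilon$, the subadditivity inequality $(\sum_i \ell_i)^\alpha \le \sum_i \ell_i^\alpha$ (valid for nonnegative reals since $\alpha < 1$) gives $\sum_i \ell_i \le \epsilon^{1/\alpha}$, so the triangle inequality forces $\sum_j r_j \ge d(z,z') - \epsilon^{1/\alpha}$, where $r_j$ is the diameter of the $j$th visited member of $\mathcal{C}$ and $\ell_i$ is the $i$th walk length. The separation hypothesis in turn gives $\ell_{j+1} \ge D\min(r_j, r_{j+1})$ for every interior walk. Bookkeeping the alternation of "large" and "small" diameters in the visited sequence (a large $r_j$ must be flanked by small ones, and the small ones together contribute at most $\epsilon^{1/\alpha}/D$ to $\sum r$), one shows that for $D$ sufficiently large relative to $\alpha$ these constraints force $\epsilon$ to exceed a definite positive threshold. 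The delicate case is when an endpoint lies in the closure of $\bigcup\mathcal{C}$, where the chain may begin or end with many arbitrarily short walks; a dyadic, scale-by-scale grouping of the visited components handles this.

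Granting the Key Lemma, the function $u_1(z) := \tilde\rho(\pi(z),\pi(z_1))$ is $\alpha$-H\"older of constant $\le 1$, constant on every $C \in \mathcal{C}$, and satisfies $u_1(z_1) = 0 < u_1(z_2)$, since the hypothesis $\{z_1,z_2\} \not\subseteq C$ for every $C$ means $\pi(z_1) \ne \pi(z_2)$. To achieve property (2) simultaneously, enumerate $\mathcal{C} = \{C_n\}$, pick a basepoint $c_n \in C_n$ for each $n$, and set
\[
u(z) \;:=\; u_1(z) + \sum_n \eta_n\, \tilde\rho(\pi(z), \pi(c_n))
\]
with $(\eta_n) \in \ell^1$. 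Each summand is $1$-Lipschitz on $(\tilde{Z},\tilde\rho)$, so the series converges in $\alpha$-H\"older norm with constant at most $1 + \sum_n |\eta_n|$. The affine functional $\eta \mapsto u(C_m) - u(C_{m'})$ has $\eta_m$-coefficient $-\tilde\rho(\pi(C_m),\pi(C_{m'})) \ne 0$ (Key Lemma), so its zero set is a closed, nowhere-dense hyperplane in $\ell^1$; the zero set of $\eta \mapsto u(z_1) - u(z_2)$ is likewise either empty or nowhere dense. A Baire category argument in $\ell^1$ then produces an $(\eta_n)$ for which $u$ simultaneously satisfies (1), (2), and (3).
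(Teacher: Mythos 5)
Your approach is genuinely different from the paper's. The paper constructs $u$ directly as a telescoping series $u=\sum_{k\ge -1} v_k$, where each $v_k$ is a Lipschitz ``correction'' at scale $r_k=2^{-k}$ supported in $N_{\Lambda r_k}\bigl(\bigcup_{C\in\C_k}C\bigr)$ and adjusting the partial sum to be constant on the sets of diameter $\sim r_k$; the separation hypothesis is used to show that scales with overlapping supports differ in index by at least $\sim\log_2 D$, which keeps the Lipschitz constants (and hence the eventual H\"older constant) under control. You instead pass to a quotient endowed with the chain pseudo-metric $\tilde\rho$, so that $\alpha$-H\"older functions constant on each $C$ are exactly pullbacks of Lipschitz functions on $(\tilde Z,\tilde\rho)$, and the whole theorem reduces to your Key Lemma, i.e.\ to nondegeneracy of $\tilde\rho$. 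This reformulation is clean and, if completed, would be arguably slicker; the Baire category step for property (2) is fine. But you should note that the Key Lemma is essentially \emph{equivalent} to the separation part of the theorem (any $u$ as in the conclusion witnesses $\tilde\rho(\pi(z),\pi(z'))\ge |u(z)-u(z')|/\|u\|_{C^\alpha}>0$), so nothing has been gained until it is actually proved.

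And there is a real gap in the sketched proof of the Key Lemma. You only invoke the separation between \emph{consecutive} visited sets, $\ell_{j+1}\ge D\min(r_j,r_{j+1})$, together with the bound $\sum_j\ell_j\le\eps^{1/\alpha}$. Those two constraints alone do not force $\eps$ to be bounded below. Concretely, if the visited diameters were allowed to alternate $R,\eta,R,\eta,\dots,R$ with $\eta\ll R$ and each interior walk of length exactly $D\eta$, then to cover a distance $d$ one needs roughly $M\approx d/R$ blocks, for a total cost of order $M(D\eta)^\alpha\to 0$ as $\eta\to 0$ while all the consecutive-pair constraints stay satisfied. What kills this scenario is the separation hypothesis applied to the \emph{non-consecutive} pair $(C^{(j)},C^{(j+2)})$: two ``large'' sets of diameter $R$ must be at distance $\ge DR$, so the walks \emph{past} a small intermediate set are forced to have total length $\gtrsim DR$, not $D\eta$. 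Your bookkeeping never uses this, so as stated it does not close the argument. In addition the ``delicate case'' where an endpoint lies in $\overline{\bigcup\C}$ is acknowledged but explicitly deferred; since the same nondegeneracy must be established there, that is another genuine omission. In short: the route is attractive and correct in outline, but the heart of the theorem --- the Key Lemma --- is not proved, and the sketch as written is missing the crucial use of separation between non-adjacent elements of $\C$.
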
 

We remark that one can construct an example $Z$, $\C$ as in the
theorem, where $Z=S^k$ and $\cup_{C\in\C}\,C$ has full 
Lebesgue measure.
In this case, if $u:Z\ra \R$ is a  
Lipschitz function that is constant on every $C\in\C$, then  almost every point $z\in Z$
will be a point of differentiability of $u$ and a point of density of some element $C\in \C$.
Hence $Du=0$ almost everywhere, and $u$ is constant.
This shows that it is necessary to take $\al<1$.

\bigskip
Our second construction of $\ell_p$-cohomology classes pertains to a special class of 
$2$-complexes.

\begin{definition} An \emph{elementary polygonal complex} 
is a connected,
simply connected, $2$-dimensional cell
complex $Y$ whose edges are colored black or white,  that enjoys the following properties:
\begin{itemize}
\item $Y$ is a union of $2$-cells that intersect pairwise in at most a single vertex or edge.
\item Every $2$-cell is combinatorially isomorphic to a polygon with even perimeter 
at least $6$.
\item The edges on the boundary 
of every $2$-cell are alternately black and white. 
\item Every white edge has thickness one, and every black
edge has thickness at least $2$.   Here the thickness of an edge is the number
of $2$-cells containing it.
\end{itemize}
The union of the white edges in $Y$ is the {\em frontier} of $Y$, and is denoted $\T$;
its connected components are trees.
\end{definition}

A simple example of an elementary polygonal complex is the orbit $\Ga(P)$ of a right-angled
hexagon $P$ in the hyperbolic plane, under the group $\Ga $ generated by
reflections in $3$ alternate sides $e_1,e_2,e_3\subset \D P$.

Elementary polygonal complexes are relevant for us because
they turn up naturally as embedded subcomplexes $Y\subset X$, where $X$ is a  generic polygonal 
$2$-complex, such that the pair $(Y,\T)$ is the result of applying excision to 
the pair $(X,X\setminus Y)$.  

For every   elementary polygonal $2$-complex $Y$, we define two invariants:
\begin{itemize}
\item $p _{\neq 0}(Y,\T)$ is  the infimum of the set of $p\in [1,\infty)$ such that 
the relative cohomology $\ell_{p}H^1_{\cont}(Y,\T)$
is nontrivial.
\item $p _{\textrm{sep}} (Y,\T)$ is the infimum of the set of  $p\in [1,\infty)$, such that
for every pair of distinct points $z_1, z_2 \in \partial Y$,
either $\{z_1, z_2 \} \subseteq \partial T$ for some connected component $T$ of $\T$,
or the relative cohomology $\ell_{p}H^1_{\cont}(Y,\T)$ separates 
$\{z_1,z_2\}$.
\end{itemize}
Our main result about elementary polygonal complexes
is the following pair of estimates on these invariants:

\begin{theorem}[Corollary \ref{corpq}] \label{thm-pq_estimate} Let $Y$ be an elementary polygonal complex. 
Assume that the perimeter of every $2$-cell of $Y$ lies in $[2m_1, 2m_2]$
and that the thickness of every black edge lies in $[k_1, k_2]$,
with $m_1 \ge 3$ and $k_1 \ge 2$. Then: \;
$$1 + \frac{\log (k_1-1)}{\log (m_2-1)}\le p_{\neq 0}(Y, \mathcal T) \le p_{\textrm{sep}}(Y, \mathcal T) 
\le 1 + \frac{\log (k_2-1)}{\log (m_1-1)}.$$
\end{theorem}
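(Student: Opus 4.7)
The middle inequality $p_{\neq 0}(Y,\mathcal T)\le p_{\textrm{sep}}(Y,\mathcal T)$ is immediate from the definitions, since any relative cohomology class that separates a pair of points is in particular nontrivial; so the work lies entirely in the two outer bounds.

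For the upper bound on $p_{\textrm{sep}}$, the plan is to construct explicit relative $\ell_p$-cocycles that separate prescribed boundary points. Each black edge $e$ of $Y$ naturally partitions the complement of $e$ into ``halfspaces'', one for each $2$-cell meeting $e$; the indicator function of any union of these halfspaces is locally constant away from $e$ and thus defines a cocycle supported in a small neighbourhood of $e$. Given a pair $z_1,z_2\in\partial Y$ not both lying on the boundary of a single component of $\mathcal T$, there is a combinatorial geodesic from $z_1$ to $z_2$ crossing a sequence of black edges $e_0,e_1,\dots$; summing the associated elementary cocycles with weights decaying geometrically produces a candidate separating function $f$. The $\ell_p$-norm of $df$ decomposes as a sum over the $2$-cells that are crossed; the key combinatorial estimate is that entering a cell of perimeter $\ge 2m_1$ through an edge of thickness $\le k_2$ contributes an expansion ratio of roughly $(k_2-1)/(m_1-1)^{p-1}$ per step, so the series converges in $\ell_p$ exactly when $p>1+\log(k_2-1)/\log(m_1-1)$. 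Since each white edge is contained in a unique $2$-cell, the construction automatically yields a function that is constant on the $0$-skeleton of every tree component of $\mathcal T$.

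For the lower bound on $p_{\neq 0}$, the goal is to show that for $p<1+\log(k_1-1)/\log(m_2-1)$, every function $f\colon Y^{(0)}\to\R$ with $\|df\|_{\ell_p}<\infty$ that is constant on the $0$-skeleton of each component of $\mathcal T$ must be constant on all of $Y^{(0)}$. The natural tool is combinatorial $p$-modulus. Each $2$-cell of perimeter $\le 2m_2$ admits at least $m_2-1$ disjoint combinatorial ``parallel crossings'' joining opposite sides, and each black edge is shared by at least $k_1$ cells, so crossing a black edge offers at least $k_1-1$ alternative continuations. Iterating this self-similar branching produces path families of combinatorial depth $n$ between the shadows of any two disjoint nondegenerate subcomplexes of $\partial Y$, whose $p$-modulus is bounded below by a geometric expression with ratio $(k_1-1)/(m_2-1)^{p-1}$; this ratio exceeds $1$ precisely in the subcritical range, forcing the modulus to be infinite. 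The standard duality between combinatorial $p$-modulus and the $\ell_p$-norm of coboundaries then forces $f$ to take equal values at any pair of vertices joined by such a family, hence to be constant.

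The central difficulty is the upper bound. One must simultaneously (i) obtain cocycles that vanish on the $0$-skeleton of $\mathcal T$, a rigid boundary condition imposed by the white edges of thickness one, (ii) arrange that their superposition has $\ell_p$-summable coboundary with the optimal exponent, and (iii) achieve separation of any prescribed pair of boundary points. Balancing these three requirements via a telescoping estimate across the tree-like nesting of the black-edge shadows at infinity is the combinatorial heart of the proof, and is precisely what pins down the critical exponent $1+\log(k_2-1)/\log(m_1-1)$.
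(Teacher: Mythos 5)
Your proposal follows a genuinely different route from the paper's, and the lower-bound half has a gap. The paper reduces everything to the homogeneous model $Y_{m,k}$ (Theorem \ref{theopq} establishes the sharp equality $p_{m,k}=q_{m,k}=1+\log(k-1)/\log(m-1)$) and then obtains the inhomogeneous bounds by padding $Y$ to constant thickness and embedding decomposing copies of $Y_{m_1,k_2}$, via Proposition \ref{propcut} and Corollary \ref{corcut1}; you skip this reduction. For the upper bound, your superposition of black-edge cocycles with geometrically decaying weights is in the same spirit as the paper's construction (which collapses $Y_{m,k}\to Y_{m,2}$, maps the latter onto an ideal $m$-gon tessellation of $\mathbb H^2$ with frontier geodesics going to points of $S^1$, and pulls back Lipschitz functions), and both hinge on the same count of $\approx (m-1)^n(k-1)^n$ frontier edges at depth $n$ carrying weights $\approx (m-1)^{-n}$, so your critical ratio $(k_2-1)(m_1-1)^{1-p}$ is right. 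This half I believe can be made to work, though you still have to verify the boundary extension is continuous and constant on each frontier tree.

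The lower bound is where the proposal fails. You assert that because the per-step ratio $(k_1-1)/(m_2-1)^{p-1}$ exceeds $1$ for subcritical $p$, the $p$-modulus of depth-$n$ path families diverges. This inference is false: branching alone does not force modulus blow-up, because the paths overlap near their shared origin. Concretely, in a $b$-ary rooted tree of depth $n$, the $p$-modulus of all root-to-leaf paths equals $\bigl(\sum_{k=1}^{n} b^{-k/(p-1)}\bigr)^{1-p}$, which converges to a finite positive constant as $n\to\infty$ for every $b>1$ and every $p>1$. To get divergence you need families of sufficiently independent paths at all scales, and establishing that is precisely the hard half of the Combinatorial Loewner Property for the ambient Fuchsian building $\Delta_{m,k,\ell}$ --- which is exactly what the paper invokes from \cite{BP1, BourK}, together with the closed-form $\Confdim(\partial\Delta_{m,k,\ell})$ from \cite{B1}, after observing that $Y_{m,k}$ decomposes $\Delta_{m,k,\ell}$ so $p_{\neq 0}(\Delta_{m,k,\ell})\le p_{\neq 0}(Y_{m,k},\mathcal T_{m,k})$ and letting $\ell\to\infty$. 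There is also a secondary issue with your ``standard duality'' step: $\partial Y$ is a Cantor set, so there are no curves in the boundary, and the translation from divergent edge-path modulus in $Y$ to triviality of continuous relative cohomology classes needs a careful statement rather than a one-line appeal. As sketched, the lower bound is not established.
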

Note that the estimate becomes sharp when the thickness and perimeter are
constant.

\subsection*{Applications to spaces and groups}
We now discuss some of applications given in the paper.

We first apply Theorem \ref{thm-qualitative_hyperbolic}
to amalgams, generalizing the construction of
\cite{B2}.

\begin{theorem}[Corollary \ref{cor2amalgam}]\label{thm-intro_amalgam}
Suppose $A,B$ are hyperbolic groups, and we are given
malnormal quasiconvex embeddings $C\hookrightarrow A$, $C\hookrightarrow B$.  Suppose 
that there is a decreasing sequence 
$\{A_n\}_{n \in \mathbb N}$ of finite index subgroups of $A$ such that 
$\cap _{n \in \mathbb N} A_n = C$, and
set $\Gamma \!_n := A_n \star _C B$.
\begin{itemize}\item [(1)]  If $p_{\textrm{sep}} (A) < p_{\textrm{sep}} (B)$ then, 
for all $p \in (p_{\textrm{sep}} (A), p_{\textrm{sep}} (B)]$
and every $n$ large enough, the $\ell _p$-equivalence relation on $\partial \Gamma \!_n$ 
possesses a coset different from a point
and the whole $\partial \Gamma$. In particular for $n$ large enough, 
$\partial \Gamma \!_n$ does not have the CLP.
\item [(2)] If $p_{\textrm{sep}} (A) < p_{\neq 0}(B)$ then, for $p \in (p_{\textrm{sep}}(A), p_{\neq 0}(B))$
and every $n$ large enough, the cosets of the $\ell_p$-equivalence relation on $\partial \Gamma \!_n$
are single points and the boundaries of cosets $gB$, for $g \in \Gamma \!_n$.
In particular, for large $n$, any quasi-isometry of $\Gamma \!_n$ permutes the cosets $gB$, for
$g\in \Gamma \!_n$. 
\end{itemize}
\end{theorem}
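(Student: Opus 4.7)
The strategy combines Theorem \ref{thm-qualitative_hyperbolic} with the tree-of-spaces geometry of $\Gamma_n = A_n *_C B$. Let $T_n$ denote the Bass--Serre tree, with vertex stabilizers the $\Gamma_n$-conjugates of $A_n$ and $B$ and edge stabilizers the conjugates of $C$. By the Bestvina--Feighn combination theorem applied to the quasi-convex malnormal inclusions $C\hookrightarrow A, B$, the group $\Gamma_n$ is hyperbolic, the subgroups $A_n, B$ remain quasi-convex in $\Gamma_n$, and distinct translates $g_1 B$, $g_2 B$ (resp.\ of $A_n$) have pairwise disjoint limit sets in $\partial \Gamma_n$. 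A key geometric input is the following asymptotic: since $\bigcap_n A_n = C$, the minimum $A$-length of elements of $A_n \setminus C$ tends to $\infty$ with $n$, whence the minimum distance in $\Gamma_n$ between two distinct translates of $B$ (or of $A_n$) tends to $\infty$ as $n \to \infty$.

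I would then bound the $\sim_p$-cosets from above by applying Theorem \ref{thm-qualitative_hyperbolic} inside a single $A_n$-vertex space. Take $Y_0$ to be (the Cayley complex of) the base vertex $A_n$, so $\partial Y_0 = \partial A$ and $\Confdim(\partial Y_0) = p_{\textrm{sep}}(A)$, and let $W \subset Y_0$ be the disjoint union of the edge spaces $aC$ adjacent to $v_0$, indexed by $a \in A_n / C$. The components are quasi-convex, carry nearby bi-infinite geodesics, and by the asymptotic above their pairwise distances exceed the threshold $D$ of Theorem \ref{thm-qualitative_hyperbolic} once $n$ is large. Hence for every $\alpha \in (0,1)$ and every $p > \frac{1}{\alpha} p_{\textrm{sep}}(A)$, the theorem supplies, for $n$ large, classes in $\ell_pH^1_{\cont}(Y_0, W)$ separating every pair in $\partial A$ not lying in a common $\partial(aC)$. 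The central technical step --- which I expect to be the main obstacle --- is to promote such classes to classes in $A_p(\partial\Gamma_n)$. I propose to do this by tree-radial extension: given a representing cochain $f$ on $Y_0^{(0)} \cup \partial Y_0$ with $\|df\|_{\ell_p}<\infty$ and constant on each component $aC$ of $W$, define $\tilde f$ on the Cayley complex of $\Gamma_n$ to equal $f$ on the base vertex space $V_{v_0}$ and, on every other vertex space $V_w$, to equal the constant value of $f$ on the edge space corresponding to the first edge of the $T_n$-geodesic from $v_0$ to $w$. A case analysis of the edge types of the Cayley complex of $\Gamma_n$ (separating edges lying in vertex spaces from those lying in edge spaces, and using that for any edge $e$ in the shared edge space between $V_{w_1}$ and $V_{w_2}$ the $T_n$-geodesic from $v_0$ to one of $w_1, w_2$ passes through the other) shows that $d\tilde f$ is supported in $Y_0^{(1)}$, hence $\|d\tilde f\|_{\ell_p} = \|df\|_{\ell_p}<\infty$ and $\tilde u \in A_p(\partial \Gamma_n)$. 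Averaging over the $\Gamma_n$-orbit of base vertices then yields a family of classes separating every pair $z_1, z_2 \in \partial \Gamma_n$ not simultaneously contained in a single $\partial(gB)$.

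The matching lower bound on cosets is easier, via the restriction map $A_p(\partial \Gamma_n) \to A_p(\partial B)$ provided by quasi-convexity of $B$: restricting a $p$-summable extension from the $0$-skeleton of $\Gamma_n$ to that of $B$ yields again a $p$-summable coboundary, up to a bounded multiplicative factor. For part (1), the hypothesis $p \le p_{\textrm{sep}}(B)$ furnishes $z_1 \ne z_2 \in \partial B$ not separated by $A_p(\partial B)$, hence not by $A_p(\partial \Gamma_n)$; combined with the upper bound this produces a $\sim_p$-coset strictly between $\{z_1\}$ and $\partial \Gamma_n$ (it sits inside $\partial B \subsetneq \partial \Gamma_n$), and the failure of the CLP then follows from the quoted equivalence $p_{\neq 0}=p_{\textrm{sep}}$ on CLP-boundaries. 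For part (2), the hypothesis $p < p_{\neq 0}(B)$ forces $A_p(\partial B) = \mathbb R$, so every $u \in A_p(\partial \Gamma_n)$ is constant on each $\partial(gB)$; combined with the upper bound, each $\sim_p$-coset is either a singleton or a full $\partial(gB)$. Finally, since $\sim_p$ is a quasi-isometry invariant, any self quasi-isometry of $\Gamma_n$ permutes the non-singleton cosets $\partial(gB)$, and since each coset $gB$ is determined up to finite Hausdorff distance by its limit set $\partial(gB)$, this upgrades to a permutation of the cosets $gB$ themselves.
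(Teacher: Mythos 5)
Your proposal follows essentially the same route as the paper's proof. The ``tree-radial extension'' you describe is precisely the inverse of the excision isomorphism $\ell_pH^1_{\cont}(\tilde K^n, \overline{\tilde K^n\setminus Y_0})\cong\ell_pH^1_{\cont}(Y_0,W)$ that the paper packages as Proposition \ref{propcut}(1): you extend $f$ by the constant value of $f$ on $E\cap Y_0$ over each component $E$ of $\overline{\tilde K^n\setminus Y_0}$, and these components are exactly the subtrees of $T_n$ hanging off edges adjacent to your base vertex. The ingredients match one for one: apply Theorem \ref{thm-qualitative_hyperbolic} (via Corollary \ref{corqualitative}) inside a single $A_n$-vertex space using the fact that $\cap_n A_n=C$ forces the edge-coset separation to tend to infinity, then push forward by excision, then use the easy restriction map into $A_p(\partial B)$ to pin down the cosets. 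So this is not a different proof.

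Two steps that you state as routine are the ones the paper treats carefully, and you should not wave them off. First, after establishing that $d\tilde f$ is supported in $Y_0^{(1)}$ and hence $p$-summable, you still need $\tilde f$ to extend continuously to $\partial\Gamma_n$ in order to conclude $\tilde u\in A_p(\partial\Gamma_n)$; this is not automatic and is the genuine content of Proposition \ref{propcut}(1), which requires a three-case argument distinguishing boundary points in $\partial Y_0$, in a single $\partial E$, and limits of infinitely many distinct $E$'s (the last case uses that distinct components of $\overline{\tilde K^n\setminus Y_0}$ subconverge to singletons, condition (2) in Definition \ref{defcut}). Second, your phrase ``averaging over the $\Gamma_n$-orbit of base vertices'' hides the projection step: given $z_1,z_2\in\partial\Gamma_n$ with $\{z_1,z_2\}\nsubseteq\partial(gB)$ for all $g$, you must exhibit an $A_n$-vertex space $V_w$ so that $z_1,z_2$ project to distinct points of $\partial V_w$ lying in different frontier components (or already lie there). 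This needs disjointness of limit sets of distinct edge cosets (malnormality) and a separate argument when some $z_i$ lies in a $\partial E$ with $\partial E\cap\partial V_w=\emptyset$; the paper handles this in the proof of Proposition \ref{propcut}(2) and Proposition \ref{propamalgam}(2) (choosing a geodesic between the two $B$-cosets and locating an $A_n$-vertex space it crosses). With these two points supplied, your outline reproduces the paper's argument.
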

We now sketch the proof of the theorem.

Let $K_A$, $K_B$, and $K_C$ be finite $2$-complexes with
respective fundamental groups $A$, $B$, and $C$, such that there are simplicial
embeddings $K_C\hookrightarrow K_A$, $K_C\hookrightarrow K_B$ inducing the
given embeddings of fundamental groups.    For every $n$, let $K^n_A\ra K_A$ be the 
finite covering corresponding to the inclusion 
$A_n\subset A$, and fix a lift $K_C\hookrightarrow K^n_A$
of the embedding $K_C\hookrightarrow K_A$.     We let $K^n$ be the result of 
gluing $K^n_A$ to $K_B$ along the copies of $K_C$, so 
$\pi_1(K^n)\simeq A_n\star_C B=\Gamma_{\!n}$.   The universal cover $\tilde K^n$
is a union of copies of the universal covers
$\tilde K^n_A=\tilde  K_A$ and $\tilde K_B$, whose incidence
graph is the Bass-Serre tree of the decomposition $\Gamma_{\!n}=A_n\star_C B$.
If we choose a copy $Y\subset \tilde K^n$ of $\tilde K_A$, then the frontier $W_n$ of $Y$
in $\tilde K^n$ breaks up into connected components which are stabilized by  conjugates
of $C$,
where the minimal pairwise separation between distinct
components of $W_n$ tends to infinity as $n\ra \infty$.   
Theorem \ref{thm-qualitative_hyperbolic} then applies,
yielding
nontrivial functions in  $\ell_pH^1_{\cont}(Y,W_n)$
for every $p>\Confdim(A)$ and every $n$ sufficiently large; 
by excision these give functions in  $\ell_pH^1_{\cont}(\tilde K^n)\simeq 
A_p(\D \Gamma\! _n)/ \mathbb R$ which are constant
on $\D Z\subset\D \Gamma\!_n$ for every copy $Z\subset \tilde K^n$
of $\tilde K_B$ in $\tilde K^n$.   These functions provide enough information
about the $\ell_p$-equivalence relation $\sim_p$  to deduce (1) and (2).

As a corollary, we obtain examples of hyperbolic groups with Sierpinski carpet boundary
which do not have the Combinatorial Loewner Property, and which are quasi-isometrically
rigid.  See Example \ref{ex-marios_question}.  These examples answer a question of Mario Bonk.

Using our estimates for the critical exponents for elementary polygonal complexes, we
obtain  upper bounds for the Ahlfors regular conformal dimension of boundaries of 
a large class of $2$-complexes.
\begin{theorem}[Proposition \ref{propappli}]\label{thm-intro_polygonal_complex}
Let $X$ be a simply connected hyperbolic $2$-complex whose
boundary is connected and approximately self-similar.
Assume that $X$ is a union of $2$-cells,
where  $2$-cells intersect pairwise in at most a vertex or edge.
\begin{enumerate}
\item If the perimeter of every $2$-cell is at least $n \ge 5$,
the thickness of every edge lies is at most $k \ge 2$, and the link of every vertex contains
no circuit of  length $3$, then 
$$\Confdim (\partial X) \le 1 + \frac{\log (k-1)}{\log (n-3)}.$$
\item If the perimeter of every $2$-cell is at least $n \ge 7$ and
the thickness of every edge is at most $k \ge 2$, then
$$\Confdim (\partial X) \le 1 + \frac{\log (k-1)}{\log (n-5)}.$$
\end{enumerate} 
\end{theorem}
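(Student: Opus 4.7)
The strategy is to combine Theorem~\ref{thm-pq_estimate} with the identification
$\Confdim(\partial X) = p_{\textrm{sep}}(X)$, which is valid for approximately
self-similar boundaries (as recalled in the introduction). It thus suffices to
show that for every $p$ strictly larger than the claimed upper bound, and for
every pair $z_1\neq z_2\in\partial X$, there is a function in
$A_p(\partial X)\simeq \ell_pH^1_{\cont}(X)\oplus\R$ separating $z_1$ and $z_2$.

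The main step is to exhibit, for any such pair, an elementary polygonal
subcomplex $Y\subset X$, with frontier system $\T$ equal to its collection of
white edges, such that
\begin{enumerate}
\item the ideal boundary $\partial\T$ separates $z_1$ from $z_2$ in $\partial X$;
\item each black edge of $Y$ has thickness at most $k$;
\item in case (1), every $2$-cell of $Y$ has perimeter at least $2(n-2)$, and in case (2), at least $2(n-4)$.
\end{enumerate}
Once such a $Y$ is available, Theorem~\ref{thm-pq_estimate} with $k_2=k$ and
$m_1=n-2$ (respectively $m_1=n-4$) yields
$p_{\textrm{sep}}(Y,\T)\le 1+\log(k-1)/\log(n-3)$ (respectively
$\le 1+\log(k-1)/\log(n-5)$). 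For $p$ above this threshold one obtains a
class in $\ell_pH^1_{\cont}(Y,\T)$ represented by a function $f$ that is
constant on each connected component of $\T$ and whose boundary values
separate $z_1,z_2$. By excision, extending $f$ by the appropriate constants on
every connected component of $X\setminus Y$ produces a class in
$\ell_pH^1_{\cont}(X)\simeq A_p(\partial X)/\R$ whose boundary function still
separates $z_1$ and $z_2$.

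The principal obstacle is the combinatorial construction of $Y$. I would build
it as a \emph{wall} running along a bi-infinite geodesic in $X^{(1)}$ that
separates $z_1$ from $z_2$: each $2$-cell of $Y$ is a polygon whose black
edges form a subarc of the boundary of some $2$-cell $P$ of $X$ crossed by the
wall, and whose white edges are the two \emph{entry} edges where neighbouring
cells of $X\setminus Y$ are attached. For the black/white alternation to hold,
the subarc must have even combinatorial length, and the two entry edges must
be distinct in $X$; avoiding local identifications is precisely where the
girth hypothesis on links intervenes. In case (2), one must skip two edges of
$P$ at each end of the arc in order to rule out backtracks generated by
length-two paths in the vertex links, giving perimeter at least $2(n-4)$; in
case (1), the absence of $3$-circuits in links permits skipping only one edge
at each end, reducing the loss to $2$ per cell and improving the perimeter
bound to $2(n-2)$. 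Verifying that these cells assemble globally into a bona
fide elementary polygonal complex --- that distinct cells meet only in a
vertex or edge, that the black/white alternation is globally consistent, and
that the frontier $\T$ is a disjoint union of trees --- is the technical heart
of the argument.
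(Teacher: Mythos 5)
Your high-level strategy matches the paper's: produce elementary polygonal subcomplexes of (a subdivision of) $X$, invoke Theorem~\ref{thm-pq_estimate} (Corollary~\ref{corpq}) for $p_{\textrm{sep}}(Y,\T)$, excise to land in $\ell_pH^1_{\cont}(X)$, and conclude via $\Confdim(\partial X)=p_{\textrm{sep}}(X)$ (Theorem~\ref{Confdim}(1)). The perimeter targets $2(n-2)$ and $2(n-4)$ are also the right ones. However, the proposed construction of $Y$ is not correct, and the error is not just a technicality.

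You build $Y$ as a ``wall'' along a single bi-infinite geodesic in $X^{(1)}$, and you say each $2$-cell of $Y$ is a polygon whose black edges form a subarc of $\partial P$ with the ``two entry edges'' as white edges. That object is not an elementary polygonal complex. First, its edges do not alternate black/white: a consecutive run of black edges bookended by two white edges violates the alternation required by the definition. Second, if you instead make the wall cross each $2$-cell $P$ once (entering through one edge, leaving through one edge), the corresponding $2$-cell of $Y$ has two black edges and two white edges, i.e.\ perimeter $4$, below the required minimum of $6$ and far from $2(n-2)$. Third, and most fatally, a strip along a single geodesic has $\partial Y$ equal to only two ideal points and each frontier tree reduces to (roughly) a single side of the strip; there is then essentially no room for nontrivial $\ell_pH^1_{\cont}(Y,\T)$, so the whole mechanism of Theorem~\ref{thm-pq_estimate} has nothing to bite on. Separating families of H\"older functions on $\partial Y$ exist only because $\partial Y$ is a Cantor set coming from a \emph{branching} structure.

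The missing idea is that the wall must branch inside every $2$-cell it enters. In the paper's construction (Proposition~\ref{propappli}), for an entry edge $e$ of a $2$-cell $c$ one exits simultaneously through \emph{all} edges of $c$ that are non-adjacent to $e$ (case~(1)) or at combinatorial distance $\ge 2$ from $e$ (case~(2)). The underlying object is a rooted \emph{tree} $T_e$ (midpoint of $e$ $\to$ center of $c$ $\to$ midpoints of $n-3$ other edges $\to$ continue), not a geodesic, and $Y_e$ is a small neighborhood of $T_e$. That is precisely what produces $2$-cells of $Y_e$ with $n-2$ black edges (the crossed edges of $c$) alternating with $n-2$ white edges (cuts across the interior of $c$), giving perimeter $2(n-2)$, and a frontier $\T_e$ which is a disjoint union of genuine trees with Cantor-set limit sets. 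In case~(2) the only change is that two more edges near $e$ are skipped, hence $2(n-4)$. One then observes that the family $\{Y_e\}_{e\in X^{(1)}}$ fully decomposes $X$ (for any pair $z_1,z_2$, take a $2$-cell $c$ crossed transversally by the geodesic $(z_1,z_2)$ and a side $e$ of $c$ avoiding the crossing edges), and applies Corollary~\ref{corcut1}(2). Your two ``skip one edge / skip two edges'' heuristics are in the right spirit, but they have to be applied symmetrically at every exit of a branching tree, not just at the two ends of a one-dimensional wall.
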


To prove Theorem \ref{thm-intro_polygonal_complex}(1), by a 
straightforward consequence of \cite{KeK, Car},
it suffices to  show that the function space $A_p(\D X)$ separates points in $\D X$ for $p>
1 + \frac{\log (k-1)}{\log (n-3)}$.  To do this, we find that (a subdivision of)
$X$ contains many elementary polygonal complexes $Y$ with thickness at most $ k$
and perimeter at least $2(n-2)$, such that excision applied to $(X,X\setminus Y)$
yields the pair $(Y,\T)$.     Theorem \ref{thm-pq_estimate}
then produces enough elements in $A_p(\D X)$ to separate points.

As an application 
of  Theorem \ref{thm-intro_polygonal_complex} we obtain, for every $\eps>0$,
a hyperbolic  group $\Gamma$
with Sierpinski carpet boundary 
with the Combinatorial Loewner Property,
such that $\Confdim(\D \Gamma)<1+\eps$.   See Example \ref{ex-johns_question}.
This answers a question of  Juha Heinonen  and John
Mackay.

\subsection*{Organization of the paper}
Section \ref{preliminaries} is a brief presentation of some useful topics in geometric analysis.
Section \ref{equivalence} discusses first $\ell_p$-cohomology and related invariants.
We present our new construction of continuous $\ell_p$-cohomology, and discuss several 
connections between $\ell_p$-cohomology and the geometry of the boundary.
In Section \ref{qualitative} we establish Theorems \ref{thm-qualitative_hyperbolic} and 
\ref{thm-intro_holderfunction} .
Applications to amalgamated products are given in Section \ref{amalgam}.
Section \ref{elementary} focusses on elementary polygonal complexes. 
Applications to polygonal complexes and
Coxeter groups are given in Sections \ref{polygonal} and \ref{sectioncox}.

\subsection*{Acknowledgements} We are grateful to John Mackay for his comments on an earlier
version of the paper.

\section{Preliminaries} 
\label{preliminaries}
This section is a brief presentation of some topics in geometric analysis
that will be useful in the sequel.
This includes the Ahlfors regular conformal dimension, the Combinatorial Loewner Property,
approximately self-similar spaces, and a few aspects of Gromov hyperbolic spaces.

\subsection*{Ahlfors regular conformal dimension} 
We refer to \cite{MT} for a detailed treatment of the conformal dimension and related subjects.

Recall that a metric space $Z$ is called a \emph{doubling metric space} if 
there is a constant $n\in\mathbb{N}$ such that every ball $B(z,r) \subset Z$ can be covered by
at most $n$ balls of radius $\frac{r}{2}$. 

A space $Z$  is \emph{uniformly perfect },
if there exists a constant
$0< \lambda <1$ such that for every ball $B(z, r)$ in $Z$ with $0 < r \le \diam Z$ one has
$ B(z,r) \setminus B(z, \lambda r) \neq \emptyset$. 

A space $Z$ is  \emph{Ahlfors $Q$-regular} (for some $Q \in (0, +\infty)$) if there
is a measure $\mu$ on $Z$ such that for every ball $B \subset Z$ of radius 
$0<r \le \diam(Z)$ one has $\mu(B) \asymp r^Q$. 

Every compact, doubling, uniformly perfect metric space is quasi-M\"obius homeomorphic
to a Ahlfors regular metric space (see \cite{He}). 
This justifies the following definition.

\begin{definition} Let $Z$ be a compact, doubling, uniformly perfect metric space.
The \emph{Ahlfors regular conformal dimension} of $Z$ 
is the infimum of the Hausdorff dimensions of the Ahlfors regular
metric spaces which are quasi-M\"obius homeomorphic to $Z$. We shall denote it
by $\Confdim (Z)$.
\end{definition}

\subsection*{The Combinatorial Loewner Property (CLP)}
The Combinatorial Loewner Property was introduced in \cite{K, BourK}.
We start with some basic related notions. 

Let $Z$ be a compact metric space, let $k \in \hn$, and let 
$\kappa \geq 1$. A finite graph
$G_k$ is called a \emph{$\kappa$-approximation of $Z$ on scale $k$},
 if it is
the incidence graph of a
covering of $Z$,  
such that for every $ v \in G_k ^0$ there
exists $z_v \in Z$ with
$$ B(z_v, \kappa ^{-1} 2^{-k}) \subset v \subset B(z_v, \kappa 2^{-k}), $$
and for $v, w \in G_k ^0$ with  $v \neq w $:
$$  B(z_v, \kappa ^{-1} 2^{-k}) \cap B(z_w, \kappa ^{-1} 2^{-k}) = 
\emptyset. $$
Note that we identify every vertex $v$ of $G_k$ with the 
corresponding subset in $Z$.
A collection of graphs $\{ G_k \} _{k \in \hn}$ is called a
\emph{$\kappa$-approximation} of $Z$, if for each $k \in \hn$
the graph $G_k$ is a
$\kappa$-approximation of $Z$ on scale $k$.

Let $\gamma \subset Z$ be a  curve and let $\rho :
G_k ^0 \to \hr_+ $ be any function. The \emph{$\rho$-length} of $\gamma$ is
$$ L_{\rho} (\gamma) = \sum_{v \cap \gamma \neq \emptyset} \rho (v).$$
For $p \geq 1$ the \emph{$p$-mass} of $\rho$ is
$$ M_p (\rho) = \sum_{v \in G_k ^0} \rho (v)^p.$$
Let $\mathcal{F}$ be a non-void family of  curves in $Z$.   We define
the \emph{$G_k$-combinatorial $p$-modulus} by
$$\Mod _p (\mathcal{F}, G_k) = \inf _{\rho} M_p (\rho),$$
where the infimum is over all \emph{$\mathcal{F}$-admissible functions}, 
i.e.\! functions  $\rho : G_k ^0 \to \hr_+ $ which satisfy 
$ L_{\rho} (\gamma) \geq 1$ for every $\gamma \in \mathcal{F}$.

We denote  by $\mathcal F (A, B)$ the family of curves joining 
two subsets $A$ and
$B$ of $Z$ and by $\Mod _p (A, B, G_k)$ its  $G_k$-combinatorial
$p$-modulus.

Suppose now that $Z$ is a compact arcwise connected doubling metric
space. Let 
$\{ G_k \} _{k \in \hn}$ be a $\kappa$-approximation of $Z$.
In the following statement $\Delta (A,B)$ denotes the \emph{relative distance} between two disjoint 
non degenerate continua $A, B \subset Z$ i.e.\!
\begin{equation}\label{reldist}
\Delta (A, B) = \frac {\dist (A, B) } {\min \{\diam A, \diam B \}}.
\end{equation}

\begin{definition}
\label{def-clp}
Suppose  $p >1$.   Then $Z$ satisfies the \emph{Combinatorial
$p$-Loewner Property} if there exist 
two positive increasing functions  $\phi , \psi $ on $(0, +\infty)$ 
with $\lim _{t \to 0} \psi (t) = 0$, such that for 
all disjoint non-degenerate continua $A, B \subset Z$
and for all $k$ with $2^{-k} \le \min \{\diam A, \diam B \} $ 
one has:
$$ \phi (\Delta (A, B) ^{-1}) \le \Mod _p (A, B, G_k)
\le \psi (\Delta (A, B) ^{-1}) .$$
We say that $Z$ satisfies the \emph{Combinatorial
Loewner Property} if it satisfies the Combinatorial
$p$-Loewner Property for some $p>1$.
\end{definition}

The CLP is invariant under quasi-M\"obius homeomorphisms. 
A compact Ahlfors $p$-regular, $p$-Loewner metric space
satisfies the Combinatorial $p$-Loewner Property (see \cite{BourK} Th.2.6 for these results). 
It is conjectured in \cite{K} that if $Z$ satisfies the CLP and is approximately self-similar
(see below for the definition), then $Z$ is quasi-M\"obius homeomorphic to a regular Loewner 
space. 

\subsection*{Approximately self-similar spaces}

The following definition appears in \cite{K}, \cite{BourK}.

\begin{definition} \label {homothety} A compact metric space $(Z, d)$ is called \emph{approximately
self-similar} if there is a constant $L_0 \ge 1$ such that if $B(z,r) \subset Z$ is a ball 
of radius $0 < r \le \diam (Z)$, then there is an open subset $U \subset Z$ which is $L_0$-bi-Lipschitz
homeomorphic to the rescaled ball $(B(z,r), \frac{1}{r} d)$.
\end{definition}

Observe that approximately
self-similar metric spaces are doubling and uniformly perfect. 
Examples include some classical fractal spaces like the square Sierpinski
carpet and the cubical  Menger sponge. Other examples are the visual boundaries of
the hyperbolic spaces
which admit an isometric properly discontinuous and cocompact group action
\cite{BourK}.
A further source of examples comes from expanding Thurston  maps,
\cite{bonkmeyer}, \cite{H3}. 

The following result is due to S. Keith and the second (named) author \cite{KeK}.
A proof is written in \cite{Car}.

\begin{theorem}\label{modconf} Suppose $Z$ is an arcwise connected,
approximately self-similar metric space.  Let $\{ G_k \} _{k \in \hn}$ 
be a $\kappa$-approximation of $Z$. Pick a positive constant $d_0$ that is small compared to 
the diameter of $Z$  and to the constant $L_0$ of Definition \ref{homothety}. 
Denote by $\mathcal F _0$ the family of curves $\gamma \subset Z$ with $\diam
(\gamma) \ge d_0$. 
Then 
$$\Confdim (Z) = \inf \{p \in [1, +\infty) \;|
\lim _{k \to +\infty} \Mod _p (\mathcal{F} _0 , G_k) = 0 \}.$$
\end{theorem}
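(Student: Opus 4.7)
Write $p_\infty := \inf\{p \in [1, +\infty) : \lim_{k \to \infty} \Mod_p(\mathcal{F}_0, G_k) = 0\}$; the plan is to prove separately that $p_\infty \le \Confdim(Z)$ and $\Confdim(Z) \le p_\infty$.

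For the upper bound I would first observe that the condition defining $p_\infty$ is qualitatively invariant under quasi-Möbius changes of metric on $Z$, since a quasi-Möbius homeomorphism between doubling uniformly perfect spaces carries a $\kappa$-approximation to a $\kappa'$-approximation (up to a bounded reindexing of scales) and distorts the diameters and relative distances of curves in $\mathcal{F}_0$ in a controlled way. It therefore suffices to prove $\lim_k \Mod_p(\mathcal{F}_0, G_k) = 0$ for every $p > Q$ whenever $Z$ is Ahlfors $Q$-regular. In that case, take the constant weight $\rho_k \equiv c\, 2^{-k}$, with $c$ chosen so that every $\gamma \in \mathcal{F}_0$ meets at least $c^{-1}\, 2^k$ vertices of $G_k$ (possible because each $v \in G_k^0$ has diameter $\asymp 2^{-k}$ and $\diam \gamma \ge d_0$); this $\rho_k$ is $\mathcal{F}_0$-admissible, while Ahlfors regularity gives $|G_k^0| \lesssim 2^{kQ}$, so $M_p(\rho_k) \lesssim 2^{k(Q-p)} \to 0$.

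For the lower bound, fix $p > p_\infty$, so $\Mod_p(\mathcal{F}_0, G_k) \to 0$. The first step is to upgrade this to a geometric decay $\Mod_p(\mathcal{F}_0, G_k) \le C\theta^k$ with $\theta \in (0,1)$, which is where approximate self-similarity enters. For each ball $B(z, 2^{-\ell}) \subset Z$, Definition \ref{homothety} produces a bi-Lipschitz rescaling onto an open subset of $Z$; pulling a near-optimal admissible weight on $G_k$ back through this rescaling yields an admissible weight at scale $k+\ell$ for curves of macroscopic diameter inside $B(z, 2^{-\ell})$. Covering $Z$ by boundedly many such balls and combining the local weights (with a connectedness argument handling curves that cross from one ball to another) produces a submultiplicative-type inequality for $\Mod_p(\mathcal{F}_0, G_{k+\ell})$ in terms of $\Mod_p(\mathcal{F}_0, G_k)$ and $\Mod_p(\mathcal{F}_0, G_\ell)$. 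Iterating this from a scale at which the modulus is already small forces the geometric decay.

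The second step is to use the decaying near-optimal weights $\rho_k$ to deform $d$ into a new metric $d'$ that is quasi-Möbius equivalent to $d$ and Ahlfors $p'$-regular for every $p'$ arbitrarily close to $p$; this yields $\Confdim(Z) \le p$. Roughly speaking, $d'$ is built by using $\rho_k(v)$ as a ``new diameter'' for each vertex $v \in G_k^0$ and defining distances as infima of $\rho$-lengths of chains, glued coherently across scales through the self-similar structure. The main obstacle is precisely this construction: one must check that the locally defined weighted diameters assemble into a genuine metric satisfying the triangle inequality uniformly across all scales, that the total mass is controlled to give Ahlfors $p'$-regularity, and that the identity $(Z,d) \to (Z,d')$ is quasi-Möbius rather than merely continuous. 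These three points all hinge on the geometric decay from the previous step, combined with approximate self-similarity to propagate the deformation consistently to arbitrarily small scales; this is the Keith--Kleiner construction, whose detailed execution is carried out by Carrasco.
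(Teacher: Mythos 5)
Note first that the paper gives no proof of Theorem~\ref{modconf}: the text merely attributes the result to Keith--Kleiner (\cite{KeK}) and refers to \cite{Car} for a written proof, so there is no ``paper proof'' to compare your attempt against. Your outline is nonetheless consistent with the strategy of \cite{Car}: the inequality $p_\infty\leq\Confdim(Z)$ is obtained from quasi-M\"obius invariance of the combinatorial-modulus vanishing condition together with a constant-weight computation on an Ahlfors-regular representative, and the reverse inequality $\Confdim(Z)\leq p_\infty$ is obtained by first using approximate self-similarity to promote $\Mod_p(\mathcal F_0,G_k)\to 0$ to geometric decay, and then building a quasi-M\"obius deformation of the metric whose Hausdorff dimension can be pushed close to $p$.

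That said, what you have written is an outline, not a proof, and you say so yourself. The metric-deformation step --- which is where essentially all the work lives --- is deferred wholesale to Keith--Kleiner/Carrasco. Several of the intermediate claims are also asserted without argument: the quasi-M\"obius invariance of $\lim_k\Mod_p(\mathcal F_0,G_k)=0$ is genuinely delicate, since a quasi-M\"obius homeomorphism distorts scales non-uniformly across $Z$, so a $\kappa$-approximation at scale $k$ does not simply map to a $\kappa'$-approximation at a fixed shifted scale; the submultiplicativity step requires a real argument for gluing the pulled-back admissible weights over a cover of rescaled balls, especially for macroscopic curves that migrate between balls; and one must actually verify that the weighted chain-lengths assemble into a metric satisfying a uniform triangle inequality, that the mass bounds give Ahlfors $p'$-regularity, and that the identity map is quasi-M\"obius. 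These are the substance of the theorem, not corollaries of its skeleton. So your proposal correctly identifies the structure of the cited proof but leaves the body to the references --- which, to be fair, is exactly what the paper does.
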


\subsection*{Hyperbolic spaces} 
Let  $X$ be a hyperbolic proper
geodesic metric space. We denote the distance between any pair of points 
$x, x' \in X$ by $\vert x - x' \vert$.
%
Let $\partial X$ be the boundary at infinity of $X$. It carries a \emph{visual metric},
i.e.\! a metric $d$ for which there is a  constant $a >1$ such that
for all $z, z' \in \partial X$, one has
\begin{equation} \label{visual}
d(z,z') \asymp a ^{- L},
\end{equation}
where $L$ denotes the distance from $x_0$ (an origin in $X$) to 
a geodesic $(z, z') \subset X$.
Moreover $X \cup \partial X$ is naturally a metric compactification of $X$.
 There is a metric $d$ on $X \cup \partial X$ that enjoys the following property: for all
$x, x' \in X \cup \partial X$, one has
\begin{equation} \label{visualbis}
d(x, x') \asymp a^{-L} \min \{1, \vert x - x'
\vert \},
\end{equation}
where $a$ is the exponential parameter in (\ref{visual}), and where $L$ denotes the distance
in $X$ from the origin $x_0$ to a geodesic whose endpoints are $x$ and $x'$.
See e.g.\! \cite{GO, BHK} for more details. For a subset $E \subset X$
we denote by $\partial E$ its \emph{limit set} in $\partial X$, i.e.\!
$\partial E = \overline{E}^{X \cup \partial X} \cap \partial X$.

If $X$ satisfies the bounded geometry and nondegeneracy conditions of the introduction,
then $\partial X$ is a doubling uniformly perfect metric space. 
Conversely, every  compact, doubling, uniformly perfect metric space 
is the visual boundary of  a unique hyperbolic metric space as above, 
up to quasi-isometry ; this follows e.g.\! from a contruction of G. Elek
\cite{E} (see also \cite{BP3} for more details).

We also notice that -- thanks to Rips' construction (see \cite{GO, BH, KapB}) -- 
every proper bounded geometry hyperbolic space is 
quasi-isometric to a contractible simplicial metric complex,
with links of uniformly bounded complexity, and all simplices isometric to regular
Euclidean simplices with unit length edges.

\section{$\ell_p$-cohomology} \label{equivalence}
This section presents aspects of the $\ell_p$-cohomology that will be useful in the 
sequel. Only the first $\ell_p$-cohomology will play a role. 
 Therefore, instead of considering contractible simplicial complexes,
we will content ourselves with simply connected ones.

We consider in this section 
a connected, simply connected, metric simplicial complex $X$, with links of uniformly
bounded complexity, and all simplices isometric to regular
Euclidean simplices with unit length edges.  We suppose that 
it is a hyperbolic metric space, and we
denote its visual boundary by $\partial X$.

\subsection*{First $\ell_p$-cohomology}

For $k \in \mathbb N$ denote by $X^{(k)}$ the set of the $k$-simplices of $X$.
For a countable set $E$ and for $p\in [1,\infty)$, let $\ell_p(E)$
be the Banach space of $p$-summable real functions on $E$. The
\emph{$k$-th space of 
$\ell_p$-cochains} is $C_p ^{(k)} (X) := \ell_p(X^{(k)})$. The standard coboundary
operator 
$$d^{(k)} : C_p ^{(k)} (X) \to C_p ^{(k+1)} (X)$$
is bounded because of the bounded geometry assumption on $X$. 
When $k=0$, the operator
$d^{(0)}$ is simply the restriction to $\ell_p(X^{(0)})$ of the differential 
operator $d$
defined for every $f : X^{(0)} \to \hr$ by
$$\forall a=(a_- , a_+) \in X^{(1)}, \; df(a) = f(a_+) - f(a_-).$$  
The \emph{$k$-th $\ell_p $-cohomology group} of $X$ is 
$$\ell_p H^k(X) = \ker d^{(k)}/\textrm{Im}\ d^{(k-1)}.$$
Since $X$ is simply connected, every $1$-cocycle on $X$ is the differential
of a unique function $f: X^{(0)} \to \hr$ 
up to an additive constant. Therefore in degree $1$ we get a canonical isomorphism 
$$\ell_p H^1(X ) \simeq
\{f: X^{(0)} \to \hr \; | \; df\in \ell_p(X^{(1)})\}/ \ell_p(X^{(0)} ) + \hr ,$$
where $\hr$ denotes the set of constant functions on $X^{(0)} $.
In the sequel we shall always represent $\ell_p H^1(X )$  via this isomorphism. 

Equipped with the semi-norm induced by the $\ell_p$-norm of $df$
the topological vector space $\ell_pH^1(X )$ is a quasi-isometric
invariant of $X$.   Moreover if $X$ satisfies a linear isoperimetric inequality,
then $\ell_p H^1(X )$ is a Banach space, and 
$\ell_p H^1(X)$ injects 
in  $\ell_q H^1(X )$ for $1 < p \le q < +\infty $.
See \cite{Pa1}, \cite{G}, \cite{B4} for a proof of these results. 
 
The \emph{continuous first $\ell_p $-cohomology group} of $X$ is
$$\ell_p  H^1 _{\textrm{cont}} (X) := \{[f] \in \ell_p H^1 (X)  \; | \; f  \; \textrm{extends~continuously~
to}\;X^{(0)} \cup \partial X\},$$
where  $X^{(0)} \cup \partial X$ is the metric compactification of $X^{(0)}$ 
(see Section \ref{preliminaries} for the definition).  

Following P. Pansu \cite{Pa1} we introduce the following quasi-isometric numerical invariant of $X$:
$$p _{\neq 0}(X) = \inf \{p \ge 1 ~ | ~ \ell_p H^1 _{\textrm{cont}}(X) \neq 0\}.$$  

\subsection*{$\ell_p$-Equivalence relations} 
For $[f] \in \ell_p H^1 _{\textrm{cont}}(X)$ denote by $f_\infty : \partial X \to \mathbb R$ its 
boundary extension.
Following  M. Gromov (\cite[p.259]{G}, see also \cite{E}, \cite{B2})
we set
$$A_p (\partial X) :=
\{u : \partial X \to \hr  \;  | \;
 u = f_\infty \; \textrm{with}  \;
[f] \in \ell_p H^1 _{\textrm{cont}}(X) \},$$
and we define the \emph{$\ell_p $-equivalence relation} on $\partial X$  by ~:
$$z_1 \sim_p z_2 \Longleftrightarrow \forall u \in A_p(\partial X),
\ u(z_1) = u(z_2).$$
This is a closed equivalence relation on $\partial X$ 
which is invariant by the boundary extensions of the quasi-isometries
of $X$. Its cosets are called the \emph{$\ell _p$-cosets}. We define:
$$p_{\textrm{sep}} (X) = \inf \{p \ge 1 ~ | ~ A_p(\partial X) \; \textrm{separates~points~in}\;\partial X \}.$$
Equivalently $p_{\textrm{sep}} (X)$ is the infimal $p$ such that all $\ell_p$-cosets are points. 

Recall (from the introduction) that $X$ is \emph{nondegenerate}, if 
every $x \in X$ lies within uniformly bounded distance from all three sides of 
some ideal geodesic triangle. For nondegenerate spaces $X$, every class
$[f] \in \ell_p H^1 _{\textrm{cont}}(X)$ is fully determined by its boundary value $f_\infty$  
(\cite{St}, see also \cite{Pa1}, \cite{BP2} Th.3.1). 
More precisely $[f] = 0$ if and only if $f_\infty$
is constant. In particular we get:

\begin{proposition}\label{critique-p} Suppose $X$ is nondegenerate. Then
$$p_{\neq 0}(X)= \inf \{p \ge 1 ~ | ~ (\partial X/\! \sim _p )  ~\mathrm{is~not~a~singleton}\}.$$
\end{proposition}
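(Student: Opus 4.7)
The plan is to prove the equality by a two-sided inequality, using as the single key input the statement quoted just before the proposition: for nondegenerate $X$, a class $[f]\in \ell_p H^1_{\mathrm{cont}}(X)$ vanishes if and only if its boundary extension $f_\infty$ is constant. Write $p^\ast := \inf\{p\ge 1 \mid (\partial X/\!\sim_p)\ \text{is not a singleton}\}$; the goal is to verify $p_{\neq 0}(X) = p^\ast$.

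For the inequality $p_{\neq 0}(X)\le p^\ast$, I would fix $p>p^\ast$ so that $\partial X/\!\sim_p$ contains at least two cosets. By the very definition of $\sim_p$, this yields points $z_1,z_2\in\partial X$ and an element $u\in A_p(\partial X)$ with $u(z_1)\neq u(z_2)$. By the definition of $A_p(\partial X)$, we have $u=f_\infty$ for some $[f]\in \ell_p H^1_{\mathrm{cont}}(X)$. Since $f_\infty$ is nonconstant, the nondegeneracy input forces $[f]\neq 0$, so $\ell_p H^1_{\mathrm{cont}}(X)\neq 0$, hence $p\ge p_{\neq 0}(X)$. Taking the infimum over $p>p^\ast$ gives $p_{\neq 0}(X)\le p^\ast$.

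The reverse inequality $p^\ast \le p_{\neq 0}(X)$ is symmetric. For $p>p_{\neq 0}(X)$ pick a nonzero class $[f]\in \ell_p H^1_{\mathrm{cont}}(X)$. By the nondegeneracy statement, $f_\infty$ is not constant, so there exist $z_1,z_2\in\partial X$ with $f_\infty(z_1)\neq f_\infty(z_2)$. Since $f_\infty\in A_p(\partial X)$, this means $z_1\not\sim_p z_2$, so $\partial X/\!\sim_p$ is not a singleton and $p\ge p^\ast$. Taking the infimum yields $p^\ast\le p_{\neq 0}(X)$.

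There is no real obstacle here beyond unwinding the definitions of $A_p(\partial X)$, $\sim_p$, and $p_{\neq 0}(X)$; the entire substance is contained in the cited fact (attributed to Strichartz, Pansu, and Bourdon--Pajot) that under the nondegeneracy hypothesis a continuous $\ell_p$-cohomology class is trivial precisely when its boundary trace is constant. One small point worth a sentence in the writeup is that the infimum characterization is equivalent to the statement ``for every $p>p_{\neq 0}(X)$, $\partial X/\!\sim_p$ is not a singleton, and for every $p<p_{\neq 0}(X)$, it is a singleton,'' which is automatic from the two inequalities above.
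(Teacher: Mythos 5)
Your proof is correct and follows the paper's approach: the paper states the proposition without proof, as an immediate consequence of the preceding fact that for nondegenerate $X$ one has $[f]=0$ in $\ell_p H^1_{\mathrm{cont}}(X)$ if and only if $f_\infty$ is constant, and your writeup simply spells out that unwinding. A slightly cleaner packaging is to observe that for each fixed $p$ the two conditions ``$\ell_p H^1_{\mathrm{cont}}(X)\neq 0$'' and ``$(\partial X/\!\sim_p)$ is not a singleton'' are equivalent, so the two sets over which the infima are taken coincide; this sidesteps the small phrasing issue where you ``fix $p>p^\ast$ so that $\partial X/\!\sim_p$ contains at least two cosets,'' which as written implicitly appeals to upward-closedness of that set of $p$.
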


We also notice that for nondegenerate spaces $X$, the $\ell_p$-cosets are
always connected in $\partial X$ (\cite{BourK} Prop. 10.1).

\subsection*{Relative $\ell_p$-cohomology}
\label{relative}
 Let $Y$ be a subcomplex of $X$. The \emph{$k$-th space of relative
$\ell_p$-cochains of $(X,Y)$} is 
$$C_p ^{(k)} (X,Y) :=  \{ \omega \in C_p ^{(k)} (X) \; | \; \omega \restr_{Y ^{(k)}} = 0 \}.$$ 
The \emph{$k$-th relative $\ell_p $-cohomology group} of $(X,Y)$ is 
$$\ell_p H^k(X,Y) = \frac{\ker \big(d^{(k)}: C_p ^{(k)} (X,Y) \to C_p ^{(k+1)} (X,Y)\big)}
{\textrm{Im}\big(d^{(k-1)} : C_p ^{(k-1)} (X,Y) \to C_p ^{(k)} (X,Y)\big)}.$$
A straightforward property is the following excision principle:
\begin{proposition}\label{excision} Suppose $U \subset Y$ is a subset such that
$Y \setminus U$ is a subcomplex of $Y$. Then for every $k \in \mathbb N$ the restriction map induces
a canonical isomorphism 
$$\ell_p H^k(X,Y) \simeq \ell_p H^k(X \setminus U,Y \setminus U).$$
\end{proposition}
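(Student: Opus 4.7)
My approach is to exhibit the excision isomorphism already at the cochain level, by means of a restriction map, and only then pass to cohomology. Concretely, I would define
\[
r^{(k)} \colon C_p^{(k)}(X,Y) \longrightarrow C_p^{(k)}(X \setminus U,\, Y \setminus U)
\]
by sending a cochain $\omega \in C_p^{(k)}(X)$ to its restriction to the simplices in $(X \setminus U)^{(k)}$. This lands in the stated target because vanishing of $\omega$ on $Y^{(k)}$ implies vanishing on the subset $(Y \setminus U)^{(k)}$, and $p$-summability is plainly preserved.

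The crucial observation for bijectivity is that $U \subset Y$, so every $\omega \in C_p^{(k)}(X,Y)$ already vanishes on $U^{(k)}$. Consequently, $\omega$ is entirely determined by $r^{(k)}(\omega)$ (injectivity), and any $\omega' \in C_p^{(k)}(X \setminus U,\, Y \setminus U)$ extends uniquely to a cochain in $C_p^{(k)}(X,Y)$ by declaring it to be zero on $U^{(k)}$: such an extension vanishes on $Y^{(k)} = (Y \setminus U)^{(k)} \cup U^{(k)}$ and has the same $\ell_p$-norm as $\omega'$ (surjectivity). Thus $r^{(k)}$ is a linear bijection with inverse given by extension by zero.

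Next I would verify that $r^{(*)}$ commutes with the coboundary. For $\tau \in (X \setminus U)^{(k+1)}$, the formula $d\omega(\tau) = \sum \pm \omega(\sigma_i)$ runs over the $k$-faces $\sigma_i$ of $\tau$. Any face $\sigma_i$ lying in $U \subset Y$ contributes zero, because $\omega$ vanishes on $Y^{(k)}$. Hence $d\omega(\tau)$ depends only on the values of $\omega$ on the faces that lie in $(X \setminus U)^{(k)}$, i.e., it agrees with the coboundary of $r^{(k)}(\omega)$ computed intrinsically in $X \setminus U$. A symmetric remark handles the extension-by-zero map in the opposite direction. Consequently $r^{(*)}$ is an isomorphism of cochain complexes, and the isomorphism of relative $\ell_p$-cohomology groups follows on passing to $\ker / \mathrm{Im}$.

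I do not anticipate a serious obstacle; the only conceptual subtlety is that $X \setminus U$ need not itself be a subcomplex of $X$, since a face of a simplex in $X \setminus U$ may happen to lie in $U$. The argument sidesteps this precisely because the relevant cochains all vanish on $U$, making the coboundary formula insensitive to whether or not the faces in $U$ are retained in the combinatorial model of $X \setminus U$.
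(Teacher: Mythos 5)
Your cochain-level argument is correct, and since the paper simply labels this ``a straightforward property'' without supplying a proof, the extension-by-zero / restriction bijection you describe is exactly the intended (and essentially only) argument: $\omega \in C_p^{(k)}(X,Y)$ vanishes on all of $Y^{(k)} \supseteq U^{(k)}$, so restriction to $(X\setminus U)^{(k)}$ is an isometric linear bijection onto $C_p^{(k)}(X\setminus U, Y\setminus U)$, commuting with $d$ because any face lying in $U \subset Y$ contributes $0$ to the coboundary sum. Your closing caveat about $X\setminus U$ not being a genuine subcomplex is a real subtlety in the generality of the statement and is handled correctly; note, though, that in every application in the paper one has $U = X\setminus Y$ for a subcomplex $Y$, so $X\setminus U = Y$ is itself a subcomplex and the issue does not actually arise there.
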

Since $X$ is simply connected, by integrating every relative $1$-cocycle, 
we obtain the following canonical isomorphism:
\begin{eqnarray*}
\ell_p H^1 (X,Y) & \simeq & \{f: X^{(0)} \to \hr \; | \; df\in \ell_p(X^{(1)}) \; 
\textrm{and}\; f\restr _{E^{(0)}}  \; \textrm{is constant} \\ &  & \;\;\textrm{in every connected component}   \;
E \; \textrm{of} \; Y \}/\! \sim,
\end{eqnarray*}
where $f \sim g$ if and only if $f-g$ belongs to $\ell _p (X^{(0)}) + \hr$.
We will always represent $\ell_p H^1 (X,Y)$ via this isomorphism. 
Note that $\ell_p H^1 (X,Y)$ injects canonically in $\ell_p H^1 (X)$. 

We denote by 
$\ell_p H^1_{\textrm{cont}} (X,Y)$ the subspace of 
$\ell_p H^1 (X,Y)$ consisting of the classes $[f]$ 
such that $f$ extends continuously to $X^{(0)} \cup \partial X$.
We introduce two numerical invariants: 
\begin{itemize}
\item $p_{\neq 0}(X, Y)$ is the infimum of the $p \in [1, +\infty)$ such that 
the relative cohomology $\ell _p H ^1 _{\textrm{cont}}(X, Y)$ is nontrivial. 
\item $p_{\textrm{sep}}(X, Y)$ is the infimum of the $p \in [1, +\infty)$ such that
for every pair of distinct points $z_1,z_2 \in \partial X$, 
either $\{z_1,z_2\} \subseteq \partial E$ for some
connected component $E$ of $Y$, or
the relative cohomology $\ell _p H ^1 _{\textrm{cont}}(X, Y)$ separates
$\{z_1,z_2\}$.
\end{itemize}
In case there is no such $p$, 
we just declare the corresponding invariant to be equal to $+\infty$. We emphasize that the
property: $\partial E \cap \partial E' = \emptyset$ for every pair of distinct connected components
$E, E'$ of $Y$, is a 
necessary condition for the finiteness of
$p_{\textrm{sep}}(X, Y)$ .

Obviously one has 
$p_{\neq 0}(X) \le p_{\neq 0}(X, Y)$. 
When $X$ is nondegenerate, every element $[f] \in \ell _p H ^1 _{\textrm{cont}}(X, Y)$
is determined by its boundary extension $f_\infty$ ; and we get
$p_{\neq 0}(X, Y) \le p_{\textrm{sep}}(X, Y)$. 

\subsection*{A construction of cohomology classes}

One of the goals of the paper is to construct $\ell_p$-cohomology classes of $X$. 
We present here a construction that uses the relative cohomology of special subcomplexes of $X$.

\begin{definition}\label{defcut} 
A subcomplex $Y \subset X$ \emph{decomposes} $X$, if it is connected, 
simply connected and quasi-convex, 
and if its frontier $W := Y \setminus \inte(Y)$ enjoys the following properties:
\begin{enumerate}
\item For every pair $H_1, H_2$ of distinct connected components of $W$ one has
$\partial H_1 \cap \partial H_2 = \emptyset$.
\item  Every sequence $\{H_i\}_{i \in \mathbb N}$ of distinct connected components of $W$
subconverges in $X \cup \partial X$ to a singleton of $\partial X$.
\end{enumerate}
A collection $\{Y_j \}_{j \in J}$ of subcomplexes of $X$
\emph{fully decomposes} $X$, if it satisfies the following properties:
\begin{itemize} 
\item [(3)] Every subcomplex $Y \in \{Y_j \}_{j \in J}$ decomposes $X$. 
\item [(4)] For every pair of distinct points $z_1, z_2 \in \partial X$, there is a 
$Y \in \{Y_j \}_{j \in J}$,
such that for every connected component $E$ of $\overline{X \setminus Y}$ 
one has $\{z_1, z_2\} \nsubseteq \partial E$.
\end{itemize}
\end{definition}

The origin of Definition \ref{defcut} lies in group amalgams, as illustrated by the
following example. 

\begin{example} \label{excut} 
Let $A, B, C$ be three hyperbolic groups, suppose that $A$ and $B$ are non-elementary
and that $C$ is a proper quasi-convex malnormal subgroup of $A$ and $B$.
Then the amalgamated product $\Gamma := A \star _C B$ is a hyperbolic
group \cite{KapI}.  Let $K_A$, $K_B$, and $K_C$ be finite $2$-complexes with
respective fundamental groups $A$, $B$, and $C$, such that there are simplicial
embeddings $K_C\hookrightarrow K_A$, $K_C\hookrightarrow K_B$ inducing the
given embeddings of fundamental groups. We let $K$ be the result of 
gluing $K_A$ to $K_B$ along the copies of $K_C$, so $\pi_1(K)\simeq A \star_C B=\Gamma$.   
The universal cover $\tilde K$
is a union of copies of the universal covers
$\tilde K_A$ and $\tilde K_B$, whose incidence
graph is the Bass-Serre tree of the decomposition $\Gamma =A \star_C B$.
If we choose a copy $Y\subset \tilde K$ of $\tilde K_A$, then $Y$ decomposes $\tilde K$.
The frontier of $Y$ in $\tilde K$ breaks up into connected components which are stabilized by conjugates
of $C$.
\end{example}
  
The following proposition and corollary will serve repeatedly in the sequel. 

\begin{proposition}\label{propcut}
Suppose that a subcomplex $Y$ decomposes $X$ and let $W := Y \setminus \inte(Y)$
be its frontier in $X$. Then:
\begin{enumerate}
\item The restriction map
$\ell_p H^1_{\textrm{cont}} (X, \overline{X \setminus Y}) \to \ell_p H^1_{\textrm{cont}} (Y,W)$,
defined by $[f] \mapsto [f \restr _{Y^{(0)}}]$, 
is an isomorphism.
\item 
$p _{\neq 0} (X,\overline{X \setminus Y} ) 
=p _{\neq 0}(Y,W)$ \; and \; $p _{\textrm{sep}} (X,\overline{X \setminus Y} ) 
=p _{\textrm{sep}}(Y,W)$.
\end{enumerate}
\end{proposition}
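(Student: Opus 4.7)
The plan is to derive part (1) by combining the excision principle (Proposition \ref{excision}) with a careful analysis of boundary extensions, and then obtain part (2) from the resulting isomorphism on continuous cohomology.

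For the algebraic content of (1), I apply Proposition \ref{excision} with $U := X \setminus Y$ regarded as a subset of $\overline{X\setminus Y}$. Then $\overline{X\setminus Y}\setminus U = W$ (a subcomplex of $\overline{X\setminus Y}$) and $X\setminus U = Y$, so excision delivers an isomorphism $\ell_p H^1(X,\overline{X\setminus Y})\simeq\ell_p H^1(Y,W)$, implemented at the level of the \emph{functions on $0$-skeleton} model by the restriction $[f]\mapsto[f\restr_{Y^{(0)}}]$. It remains to show this isomorphism identifies the continuous subspaces. The forward direction is easy: the quasi-convexity of $Y$ built into Definition \ref{defcut} implies that the visual metrics of $Y$ and $X$ are comparable on $Y$, so $Y^{(0)}\cup\partial Y$ embeds as a topological subspace of $X^{(0)}\cup\partial X$, and the restriction of any continuous extension of $f$ provides a continuous extension of $f\restr_{Y^{(0)}}$.

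The converse is the main step. Given a continuous extension $u\colon Y^{(0)}\cup\partial Y\to\R$ of $f\restr_{Y^{(0)}}$, I extend $f$ to $\bar f\colon X^{(0)}\cup\partial X\to\R$ by $\bar f = u$ on $\partial Y$ and $\bar f(z) = c_E$ for $z\in\partial X\setminus\partial Y$, where $E$ is the component of $\overline{X\setminus Y}$ whose limit set contains $z$ and $c_E$ is the common value of $f$ on $E^{(0)}$. Well-definedness hinges on the fact that such a component $E$ is unique, which one argues via hyperbolicity: if $E_1, E_2$ were distinct components both having $z$ as a limit point, a geodesic segment between points of $E_1$ and $E_2$ converging to $z$ would have to traverse $W\subset Y$, forcing $z\in\partial Y$ and yielding a contradiction. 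For continuity at $z\in\partial X$, I consider a sequence $v_i\in X^{(0)}$, $v_i\to z$, and split cases. If the $v_i$ eventually lie in a single component $E$ of $\overline{X\setminus Y}$ with $z\in\partial E$, then $f(v_i)=c_E$ converges to $c_E=\bar f(z)$ immediately. If the $v_i$ range over infinitely many distinct components $E_i$, then each $E_i$ meets $W$ in some component $H_i$, and a standard hyperbolic projection argument relating $v_i$ to $H_i$, together with condition (2) of Definition \ref{defcut}, forces the $H_i$'s to subconverge to the singleton $\{z\}$; selecting $w_i\in H_i^{(0)}$ with $w_i\to z$ gives $z\in\partial Y$ and $c_{E_i}=f(w_i)=u(w_i)\to u(z)=\bar f(z)$.

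For part (2), the equality $p_{\neq 0}(X,\overline{X\setminus Y})=p_{\neq 0}(Y,W)$ is immediate from the isomorphism in (1). For $p_{\textrm{sep}}$, I use the explicit description of $\bar f$ above: since $u$ is constant (equal to $c_E$) on each component $H\subseteq E$ of $W$ and therefore, by continuity, on its limit set $\partial H\subseteq\partial Y$, the extended function $\bar f$ is constant on each $\partial E\subseteq\partial X$. Combined with condition (1) of Definition \ref{defcut}, this shows that the separation of pairs in $\partial X$ avoiding every $\partial E$ corresponds precisely to the separation of pairs in $\partial Y$ avoiding every $\partial H$. The principal technical obstacle is the continuity verification in the converse direction above, which requires invoking the quasi-convexity of $Y$, the bounded geometry of $X$, and the two structural conditions of Definition \ref{defcut} in concert.
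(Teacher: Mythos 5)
Your treatment of part (1) is essentially the same as the paper's: both use excision with $U = X\setminus Y$ for the algebraic identification and then verify that the continuous subspaces match. The main difference is cosmetic — you construct the boundary extension $\bar f$ on all of $\partial X$ explicitly and then check continuity, while the paper takes an arbitrary sequence $\{x_n\}\to x_\infty$ and shows $\{f(x_n)\}$ converges; the case analysis (limit outside $\partial Y$; sequence in $Y$; sequence in a single component $E$; sequence across infinitely many distinct $E_n$) is the same, and the crux of the hard case (C) is the same invocation of Lemma \ref{lemcut}(1) together with condition (2) of Definition \ref{defcut}. You omit the easy subcase where the $v_i$ eventually lie in $Y$ and you do not justify $u(z)=c_E$ when $z\in\partial E\cap\partial Y$ (this follows because $f$ is constant on $H^{(0)}=(E\cap Y)^{(0)}$ and $u$ is continuous on $\partial H$), but these are small omissions.

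Part (2) has a genuine gap in the $p_{\textrm{sep}}$ equality. Your argument proposes to transfer a pair $\{z_1,z_2\}\subset\partial X$, avoiding all $\partial E$, to a pair $\{z'_1,z'_2\}\subset\partial Y$ avoiding all $\partial H$, and to separate in $\partial Y$. But if $z_1\in\partial E_1$ with $\partial E_1\cap\partial Y=\emptyset$ (a possibility the definition of ``decomposes'' explicitly allows, since the component $H_1 = E_1\cap Y$ of $W$ may be bounded and thus have empty limit set), there is no candidate $z'_1\in\partial E_1\cap\partial Y$ and your correspondence breaks down. The paper handles this case separately via Lemma \ref{lemcut}(3): $\partial E_1$ is then open and closed in $\partial X$, so the characteristic function of $\partial E_1$ lies in $A_q$ for every $q\ge 1$ and separates $z_1$ from $z_2$. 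Your summary sentence, ``the separation of pairs in $\partial X$ avoiding every $\partial E$ corresponds precisely to the separation of pairs in $\partial Y$ avoiding every $\partial H$,'' is also asserted rather than verified; in the non-degenerate case one must actually check that the chosen $z'_1,z'_2$ are distinct and avoid every $\partial H$, which requires combining Definition \ref{defcut}(1) with Lemma \ref{lemcut}(2) as the paper does. These points need to be filled in for the argument to be complete.
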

 
\begin{corollary}\label{corcut1}
\begin{itemize}
\item [(1)] Suppose that a subcomplex $Y$ decomposes $X$, and let $W$
be its frontier. Then one has: 
$$p_{\neq 0}(X) \le p_{\neq 0}(Y, W).$$
\item [(2)] Suppose that a subcomplex collection $\{Y_j \}_{j \in J}$ fully decomposes
$X$, and let $W _j $ be the frontier of $Y_j$. Then one has: 
$$p_{\textrm{sep}} (X) \le \sup _{j \in J} p_{\textrm{sep}}(Y_j, W_j).$$ 
\end{itemize}
\end{corollary}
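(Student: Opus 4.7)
\textbf{Proof plan for Corollary \ref{corcut1}.} The corollary should follow in a formal way from Proposition \ref{propcut} together with the canonical injection $\ell_pH^1(X,Y) \hookrightarrow \ell_pH^1(X)$ recorded in the subsection on relative cohomology. The only real content is to track boundary extensions through this injection so that (non)triviality and separation in the relative setting transfer to the absolute one.

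For part (1), I would proceed as follows. Fix any $p > p_{\neq 0}(Y,W)$; by Proposition \ref{propcut}(2) we also have $p > p_{\neq 0}(X, \overline{X\setminus Y})$, so there exists a nontrivial class $[f] \in \ell_pH^1_{\cont}(X, \overline{X\setminus Y})$. Using the representation of relative cohomology given in the excerpt, pick a representative $f: X^{(0)} \to \R$ that is constant on every connected component $E^{(0)}$ of $\overline{X\setminus Y}$ and whose coboundary lies in $\ell_p(X^{(1)})$. Exactly the same $f$ represents a class in $\ell_pH^1(X)$ via the canonical injection, and since $f$ already extends continuously to $X^{(0)}\cup \D X$, the class lies in $\ell_pH^1_{\cont}(X)$. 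Injectivity of $\ell_pH^1(X, \overline{X\setminus Y}) \hookrightarrow \ell_pH^1(X)$ ensures it is nonzero there too. Hence $p \ge p_{\neq 0}(X)$, and taking the infimum over such $p$ yields (1).

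For part (2), fix $p > \sup_{j\in J} p_{\textrm{sep}}(Y_j,W_j)$ and let $z_1 \neq z_2$ in $\partial X$. By condition (4) of Definition \ref{defcut}, there is some $j \in J$ such that $\{z_1,z_2\} \nsubseteq \partial E$ for every connected component $E$ of $\overline{X\setminus Y_j}$. By Proposition \ref{propcut}(2), $p > p_{\textrm{sep}}(X, \overline{X\setminus Y_j})$, so by the very definition of $p_{\textrm{sep}}(X, \overline{X\setminus Y_j})$, one of two alternatives must hold for the pair $\{z_1,z_2\}$: either they share a component $\partial E$ of $\overline{X\setminus Y_j}$, or the class $[f] \in \ell_pH^1_{\cont}(X, \overline{X\setminus Y_j})$ separates them. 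The first is ruled out by the choice of $j$, so we get $[f]$ with $f_\infty(z_1) \neq f_\infty(z_2)$. Pushing $[f]$ forward through the injection into $\ell_pH^1_{\cont}(X)$ yields a class in $A_p(\partial X)$ with the same boundary extension $f_\infty$, which separates $z_1$ from $z_2$. Thus $A_p(\partial X)$ separates points for every such $p$, proving $p_{\textrm{sep}}(X) \le \sup_{j\in J} p_{\textrm{sep}}(Y_j,W_j)$.

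The main subtlety, rather than a genuine obstacle, is the passage from a relative class to an absolute one: one must verify that the same representative $f$ used on the relative side furnishes a representative of the image class with the same continuous boundary extension, so that nontriviality and separation are preserved. This is a direct consequence of how the two cohomology spaces are modelled on the same underlying vertex functions with the same equivalence relation modulo $\ell_p(X^{(0)}) + \R$; once this is recorded, both assertions of the corollary are immediate from Proposition \ref{propcut}.
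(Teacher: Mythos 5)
Your proof is correct and follows essentially the same route as the paper, which simply chains the inequality $p_{\neq 0}(X)\le p_{\neq 0}(X,\overline{X\setminus Y})$ (coming from the canonical injection $\ell_pH^1(X,\overline{X\setminus Y})\hookrightarrow\ell_pH^1(X)$, noted earlier in the section) with Proposition \ref{propcut}(2), and similarly for part (2) using condition (4) of Definition \ref{defcut}. You have just unpacked these citations into a self-contained argument.

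One small logical caution worth flagging: when you ``fix $p > p_{\neq 0}(Y,W)$'' and then immediately produce a nontrivial class in $\ell_pH^1_{\cont}(X,\overline{X\setminus Y})$, you are implicitly using more than the definition of the infimum --- strict inequality to an infimum only gives nontriviality at some $q< p$, not at $p$ itself, unless you invoke the monotone inclusion $\ell_qH^1\hookrightarrow\ell_pH^1$ for $q\le p$ that the paper records in the hyperbolic setting. The cleaner way to present parts (1) and (2) is as a set containment: for every $p$ at which the relative cohomology is nontrivial (resp.\ separates the relevant pairs), the injection into absolute cohomology preserves boundary values, so the absolute cohomology is nontrivial (resp.\ separates) at that same $p$; comparing the resulting sets and taking infima gives the inequalities directly, with no appeal to monotonicity. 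This does not change the substance of your argument, but it removes the hidden dependency.
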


The proof of the proposition will use the

\begin{lemma}\label{lemcut}
Let $Y \subset X$ be a connected  subcomplex, let $W$ be its frontier in $X$,
and let $\mathcal E$ be the set of the connected components of $\overline{X \setminus Y}$.
Then:
\begin{enumerate}
\item The connected components of $W$ are precisely the subsets of the form
$E \cap Y$, with $E \in \mathcal E$. 
\item Their limit sets in $\partial X$ satisfy 
$\partial (E \cap Y) = \partial E \cap \partial Y$.
\item If $E \in \mathcal E$ is such that $\partial E \cap \partial Y = \emptyset$,
then $\partial E$ is open (and closed) in $\partial X$. 
\end{enumerate}
\end{lemma}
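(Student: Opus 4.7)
My plan is to address (1) via a reduced Mayer--Vietoris computation that exploits the simply-connectedness of $X$, and to handle (2) and (3) with a common ``first-crossing'' argument along a geodesic segment, combined with the hyperbolicity of $X$. For (1), I would apply reduced Mayer--Vietoris to the decomposition $X=Y\cup \overline{X\setminus Y}$ into closed subcomplexes, whose intersection is $W$. Since $X$ is connected and simply connected, $\tilde H_0(X)=\tilde H_1(X)=0$, and since $Y$ is connected, $\tilde H_0(Y)=0$, so the sequence collapses to the statement that the inclusion-induced map $\tilde H_0(W)\to \tilde H_0(\overline{X\setminus Y})$ is an isomorphism. The key observation to convert this homological statement into a set-theoretic one is that for the natural map $\phi:\pi_0(W)\to \mathcal E$ sending each component $H$ to the unique $E\in\mathcal E$ containing it, the induced map on reduced $H_0$'s is an isomorphism if and only if $\phi$ is a bijection of sets. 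To conclude that $\phi^{-1}(E)=\{E\cap Y\}$, I would note that $E\cap Y$ is clopen in $W$ (since $E$ is clopen in the locally connected space $\overline{X\setminus Y}$), hence a nonempty union of components of $W$ all mapping to $E$ under $\phi$; bijectivity of $\phi$ then forces $E\cap Y$ to consist of a single component.

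For (2), the inclusions $E\cap Y\subseteq E$ and $E\cap Y\subseteq Y$ yield $\partial(E\cap Y)\subseteq \partial E\cap \partial Y$ immediately. For the reverse inclusion I would fix $\xi\in \partial E\cap \partial Y$, pick sequences $x_n\in E$ and $y_n\in Y$ converging to $\xi$, and after discarding the $n$ with $x_n\in Y$ (which already give points of $E\cap Y$ converging to $\xi$), consider the geodesic segment $[x_n,y_n]$. Since $x_n$ lies in the open set $E^\circ:=E\setminus Y$ and $y_n\in Y$, the segment has a first crossing $z_n$ into $Y$, and $z_n\in W$ by continuity. The portion of the segment before $z_n$ lies in $X\setminus Y=\bigsqcup_{E'\in\mathcal E}(E'\setminus Y)$ and by connectedness cannot switch components, so it stays in $E^\circ$; hence $z_n\in \overline{E^\circ}\cap Y\subseteq E\cap Y$. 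Hyperbolicity, via $(x_n\mid y_n)_{x_0}\to \infty$ and formula (\ref{visualbis}), then forces every point of $[x_n,y_n]$, and in particular $z_n$, to converge to $\xi$ in $X\cup \partial X$.

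For (3), $\partial E$ is always closed in $\partial X$; the content is openness. Assuming for contradiction $\xi_k\to \xi\in \partial E$ with $\xi_k\notin \partial E$, I would use that $\partial Y$ is closed in $\partial X$ and disjoint from $\partial E$ to conclude $\xi_k\notin \partial Y$ for $k$ large, and thus to select $x_k\to \xi$ in $X$ with $x_k\in X\setminus (Y\cup E)$; necessarily $x_k$ lies in some component $E'_k\neq E$. Choosing $x'_k\in E$ with $x'_k\to \xi$, the geodesic $[x_k,x'_k]$ joins points of distinct components of $\overline{X\setminus Y}$ and so must cross $Y$ at some $y_k$; the hyperbolic argument from (2) then forces $y_k\to \xi$, placing $\xi\in \partial Y$ and contradicting $\partial E\cap \partial Y=\emptyset$. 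The main obstacle, shared by (2) and (3), is to verify that the first-crossing point simultaneously lies in the desired set and converges to $\xi$; this is handled uniformly by the first-crossing-plus-hyperbolicity template, while (1) rests on the comparatively painless passage from isomorphism on $\tilde H_0$ to bijectivity on $\pi_0$.
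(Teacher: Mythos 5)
Your proof is correct, and parts (2) and (3) run along essentially the same lines as the paper's: the ``first-crossing'' point on a geodesic between an approximating sequence for a boundary point, forced into $E\cap Y$ (or into $Y$) by the component decomposition of $X\setminus Y$, and then forced to converge to the boundary point by hyperbolicity. You are slightly more explicit than the paper in justifying why the crossing point actually lands in $E\cap Y$ and why the relevant geodesic must meet $Y$; in (3) the paper sidesteps the need for points avoiding $Y$ by directly showing $\{\partial E, \partial(X\setminus E)\}$ partitions $\partial X$, whereas you achieve the same via an explicit contradiction with points $x_k\in X\setminus(Y\cup E)$ — both fine. The genuine difference is in (1): the paper runs the long exact sequence of the pair $(X,\overline{X\setminus Y})$, transfers $H_1$ of the pair to $H_1(Y,W)$ by excision, and uses injectivity of the connecting map to produce a $1$-chain in $W$ joining two chosen vertices of $E\cap Y$; you instead invoke reduced Mayer--Vietoris for $X=Y\cup\overline{X\setminus Y}$ to get that the inclusion $W\hookrightarrow\overline{X\setminus Y}$ is an $\tilde H_0$-isomorphism, translate this into bijectivity of $\phi:\pi_0(W)\to\mathcal E$, and then nail down $\phi^{-1}(E)=\{E\cap Y\}$ using clopenness of $E\cap Y$ in $W$. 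These are two packagings of the same underlying homological fact; your version has the small advantage of making explicit the step from ``each $E\cap Y$ lies in a single component'' to ``each $E\cap Y$ equals a component'' (via clopenness and bijectivity of $\phi$), which the paper leaves to the reader.
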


\begin{proof}[Proof of Lemma \ref{lemcut}]
(1). Notice that $W$ can be expressed as $W = (\overline{X \setminus Y}) \cap Y$. 
Hence it is enough to show 
that every subset $E \cap Y$ (with $E \in \mathcal E$) is contained in a connected component of $W$.
Consider the following exact
sequence associated to the ordinary simplicial homology of the couple $(X, \overline{X \setminus Y})$:
$$H_1(X) \to H_1 (X,\overline{X \setminus Y}) \stackrel{\overline{\partial}}{\ra} H_0 (\overline{X \setminus Y}).$$
Since $X$ is simply connected one has $H_1(X) =0$. By excising the open subset $X \setminus Y$ from $X$ and 
$\overline{X \setminus Y}$, we get a canonical isomorphism  
$H_1 (Y, W) \to H_1 (X,\overline{X \setminus Y})$. Therefore the boundary map
$\overline{\partial} : H_1 (Y, W) \to H_0 (\overline{X \setminus Y})$ is injective. 
Let $w_1, w_2$ be two distinct points in $E \cap Y^{(0)}$. Since $Y$ is connected there is path
$\gamma \subset Y^{(1)}$ joining $w_1$ to $w_2$. It defines a $(Y, W)$-relative $1$-cycle,
such that $\overline{\partial}([\gamma]) = [w_2 - w_1] = 0$ in $H_0 (\overline{X \setminus Y})$.
Since $\overline{\partial}$ is injective, $[\gamma]$ is trivial in $H_1 (Y, W)$.
Therefore there is a $2$-chain $\sigma$ in $Y$, such that $\partial \sigma - \gamma$ is a $1$-chain in $W$
whose boundary is $w_1 - w_2$. Hence $w_1$ and $w_2$ lie in the same connected component of $W$.

(2). It is enough to prove that 
$$(\partial E \cap \partial Y) \subset \partial (E \cap Y).$$ 
Let $z \in \partial E \cap \partial Y$,
and pick points $x_n \in E$, $y_n \in Y$ that both converge to $z$ as $n \to \infty$.
Consider the geodesic segment $[x_n, y_n]$; it must meet $E \cap Y$, let
$w_n$ be an intersection point. Then $w_n$ converges to $z$; thus $z \in \partial (E \cap Y)$.
 
(3). Let $E \in \mathcal E$ with 
$\partial E \cap \partial Y = \emptyset$. We claim that $\{\partial E , \partial (X \setminus E)\}$
is a partition of $\partial X$.
Clearly, one has $\partial X = \partial E \cup \partial (X \setminus E)$. 
Suppose by contradiction that 
$\partial E \cap \partial (X \setminus E)$ is nonempty. Let 
$z \in \partial E \cap \partial (X \setminus E)$, and pick $x_n \in E$ and $x_n ' \in X \setminus E$
that both converge to $z$. Consider the geodesic segment $[x_n , x_n ']$; it must meet $Y$, let
$y_n$ be an intersection point.
We get $z = \lim y_n \in \partial Y$, which is a contradiction. The claim follows.
\end{proof}

We can now give the 

\begin{proof}[Proof of Proposition \ref{propcut}] (1). We apply the excision property (Proposition \ref{excision}) to the complexes
$X$, $\overline{X \setminus Y}$ and to the subset $U = X \setminus Y$. 
Since 
$(\overline{X \setminus Y}) \setminus U = (\overline{X \setminus Y}) \cap Y = W$,
we obtain that $\ell_p H^1 (X, \overline{X \setminus Y})$ is isomorphic to $\ell_p H^1 (Y,W)$.
The complexes $X$ and $Y$ are simply connected, thus, via the canonical representation, 
the isomorphism can 
simply be written as 
$[f] \mapsto [f \restr _{Y^{(0)}}]$. 

We now suppose that $f \restr _{Y^{(0)}}$ extends continuously to
$Y^{(0)} \cup \partial Y$, and we prove that $f$ extends continuously to $X^{(0)} \cup \partial X$.   
Let $\mathcal E$ be the set of connected components of $\overline{X \setminus Y}$.
Consider a sequence $\{x_n\}_{n \in \hn} \subset X^{(0)}$ that converges in 
$X^{(0)} \cup \partial X$ to a point $x _\infty \in \partial X$.
We wish to prove that
$\{f(x_n)\}_{n \in \hn}$ is a convergent sequence. We distinguish several cases.

If $x _\infty$ belongs to $\partial X \setminus \partial Y$, then for $n$ and $m$ large enough the
geodesic segments $[x_n, x_m]$ do not intersect $Y$. Therefore 
they are all contained in the same component $E \in \mathcal E$.
Thus $f(x_n)$ is constant for $n$ large enough.

Suppose now that $x_\infty \in \partial Y$. Denote by $(f \restr _{Y^{(0)}}) _\infty$ the boundary extension of 
$f \restr _{Y^{(0)}}$ to $\partial Y$. We will prove that $f (x_n) \to 
(f \restr _{Y^{(0)}}) _\infty (x_\infty)$. By taking a subsequence if necessary, 
it is enough to consider the following special
cases: 
\begin{itemize}
\item [(A)]For every $n \in \mathbb N$, $x_n$ belongs to  $Y$.
\item [(B)]There is an $E \in \mathcal E$ such that for every $n \in \mathbb N$, $x_n$ belongs to  $E$.
\item [(C)]There is a sequence 
$\{E_n\}_{n \in \hn}$ of distinct elements of $\mathcal E$ such that $x_n \in E_n$.
\end{itemize}  
Case (A) is obvious. In case (B), let $y_n \in Y ^{(0)}$ which converges to $x_\infty$.
When one travels from $x_n$ to $y_n$ along a geodesic path 
$\gamma _n \subset X^{(1)}$, one meets
for the first time $Y ^{(0)}$ at a point $y'_n \in (E \cap Y^{(0)})$.
Since $y' _n \in \gamma _n$, it satisfies  $y'_n \to x_\infty$
when $n \to \infty$. Therefore we obtain : 
$$f(x_n) = f(y'_n) = f \restr _{Y^{(0)}}(y'_n) \to (f \restr _{Y^{(0)}})_\infty (x_\infty).$$
In case (C), let $y$ be an origin in $Y$. Every geodesic segment of $X$ joining $y$ to
$E_n$ meets $E_n \cap Y$.  
It follows from property (2) in Definition \ref{defcut}
and from Lemma \ref{lemcut}(1), that 
the sequence $\{E_n\}_{n \in \hn}$ converges in $X \cup \partial X$ to the singleton $\{x_\infty \}$.
For $n \in \hn$, pick $y_n \in E_n \cap Y^{(0)}$.  
The sequence $\{y_n \}_{n \in \hn}$
tends to $x_\infty$; this leads to 
$$f(x_n) = f \restr _{Y^{(0)}} (y_n) \to (f \restr _{Y^{(0)}})_\infty (x_\infty).$$

(2). The equality $p _{\neq 0} (X,\overline{X \setminus Y} ) 
=p _{\neq 0}(Y,W)$ follows trivially from part (1). We now establish the equality
$p _{\textrm{sep}} (X,\overline{X \setminus Y} ) = p _{\textrm{sep}}(Y,W)$.

The equality $p _{\textrm{sep}}(Y,W) \le p _{\textrm{sep}} (X,\overline{X \setminus Y} )$
is a straightforward consequence of Lemma \ref{lemcut}(2) and part (1) above. 

To establish the converse inequality, suppose that $\ell_p H^1_{\textrm{cont}} (Y,W)$
separates every pair of distinct points $z'_1, z'_2 \in \partial Y$ such that
$\{z'_1, z'_2\} \nsubseteq \partial H$ for every component $H$ of $W$.
Given a pair of distinct points $z_1, z_2 \in \partial X$, with $\{z_1, z_2\} \nsubseteq \partial E$ 
for every component $E \in \mathcal E$, we are looking for an element 
$[f] \in \ell_p H^1_{\textrm{cont}} (X, \overline{X \setminus Y})$
such that $f_\infty (z_1) \neq f_\infty (z_2)$. 

First, suppose there is an $E \in \mathcal E$ such that $z_1 \in \partial E$ and 
$\partial E \cap \partial Y = \emptyset$. Then, by Lemma \ref{lemcut}(3), the subset $\partial E$
is open and closed in $\partial X$. As an easy consequence the characteristic function $u$ of $\partial E$
writes $u = f_\infty$, with
$[f] \in \ell_q H^1_{\textrm{cont}} (X, \overline{X \setminus Y})$, 
for every $q \ge1$.

Suppose on the contrary, that neither $z_1$ nor $z_2$ belong to a limit set $\partial E$ 
with $\partial E \cap \partial Y = \emptyset$. For $i = 1,2$, define a point $z'_i \in \partial Y$
as follows : 
\begin{itemize}
\item If $z_i \in \partial Y$ set $z'_i := z_i$. 
\item If $z_i \notin \partial Y$, then there is an $E_i \in \mathcal E$ with $z_i \in \partial E_i$;
pick $z'_i \in \partial E_i \cap \partial Y$.
\end{itemize}
 From Lemma \ref{lemcut}(2) and from the assumption (1) in Definition \ref{defcut}, 
we have 
$\partial E_1 \cap \partial E_2 \cap Y = \emptyset$ 
for every distinct components $E_1, E_2 \in \mathcal E$. It follows that  
$z'_1 \neq z'_2$, and that $\{z'_1, z'_2\} \nsubseteq \partial H$ 
for every component $H$ of $W$. Our separability assumption 
on $\ell_p H^1_{\textrm{cont}} (Y,W)$, in combination with part (1) isomorphism, 
yields a desired element  
$[f] \in \ell_p H^1_{\textrm{cont}} (X, \overline{X \setminus Y})$.
\end{proof}

\begin{proof}[Proof of Corollary \ref{corcut1}]

(1).  From a standard inequality and Proposition \ref{propcut}(2), one has 
$$p_{\neq 0}(X) \le p_{\neq 0}(X,\overline{X \setminus Y}) = p_{\neq 0}(Y, W).$$ 

(2). From Definition \ref{defcut} and Proposition \ref{propcut}(2), one has
$$p_{\textrm{sep}} (X) \le \sup _{j \in J} p_{\textrm{sep}} (X,\overline{X \setminus Y_j}) 
= \sup _{j \in J} p_{\textrm{sep}}(Y_j, W_j)\,.$$
\end{proof}

\subsection*{Cohomology and the geometry of the boundary}\label{cohoboundary}

 The following result relates the $\ell_p$-cohomology of $X$ 
with the structure of the boundary $\partial X$, 
more precisely with the Ahlfors regular conformal dimension 
and the Combinatorial Loewner Property (see Section \ref{preliminaries} for the 
definitions).  It will serve as a main tool in the paper.

\begin{theorem}\label{Confdim}
Assume that $X$ is non-degenerate and that
$\partial X$ is connected and approximately self-similar, let $p \ge 1$.
Then : 
\begin{itemize}
\item [(1)] $p > \Confdim (\partial X)$ if and only if $(\partial X /\! \sim _p ) = \partial X$; 
in particular $$p_{\textrm{sep}} (X) = \Confdim (\partial X).$$
\item [(2)] If $\partial X$ satisfies the CLP, then for $1 \le p \le \Confdim (\partial X)$ 
the quotient  $\partial X/\! \sim _p$ is a singleton; in particular  $$p_{\neq 0}(X)
= p_{\textrm{sep}} (X) = \Confdim (\partial X).$$
\end{itemize}
\end{theorem}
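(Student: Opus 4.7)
My plan rests on a well-known dictionary between elements of $A_p(\partial X)$ and admissible weights for the combinatorial $p$-modulus on the approximating graphs $\{G_k\}$ of $\partial X$. Given $[f] \in \ell_p H^1_{\textrm{cont}}(X)$ with boundary $u = f_\infty$, I would assign to each vertex $v \in G_k^0$ the oscillation $\rho_k(v)$ of $f$ on the $X^{(0)}$-shadow of $v$. Using the visual metric estimates (\ref{visual})--(\ref{visualbis}), non-degeneracy, and bounded geometry, one obtains $M_p(\rho_k) \le C\,\|df\|_{\ell_p(X^{(1)})}^p$ uniformly in $k$, and after normalization by $|u(A)-u(B)|$ the weight $\rho_k$ is $\mathcal F(A,B)$-admissible whenever $u$ separates the continua $A$ and $B$ by a definite amount. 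This dictionary drives both parts.

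\textbf{Part (1).} The inequality $p_{\textrm{sep}}(X) \le \Confdim(\partial X)$ is immediate: for $p > \Confdim(\partial X)$ I pick an Ahlfors $Q$-regular, quasi-M\"obius homeomorphic realization $Z$ of $\partial X$ with $Q < p$, invoke Elek's theorem cited in the text to place $\mathrm{Lip}(Z) \subset A_p(Z)$, and use quasi-M\"obius invariance of $A_p$ to conclude that $A_p(\partial X)$ separates points of $\partial X$. For the converse $p_{\textrm{sep}}(X) \ge \Confdim(\partial X)$, suppose $p < \Confdim(\partial X)$; Theorem \ref{modconf} furnishes $d_0>0$ with $\Mod_p(\mathcal F_0, G_k) \not\to 0$. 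If $A_p(\partial X)$ separated every pair of distinct points, then a compactness-plus-approximate-self-similarity argument would let me assemble, at every scale $k$, a finite family of rescaled weights produced by the dictionary into an $\mathcal F_0$-admissible weight on $G_k$ with $p$-mass tending to $0$, contradicting Theorem \ref{modconf}.

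\textbf{Part (2).} Under CLP, part (1) already gives $p_{\textrm{sep}}(X) = \Confdim(\partial X)$, so only $p_{\neq 0}(X) \ge \Confdim(\partial X)$ remains. Suppose $p < \Confdim(\partial X)$ and some $u = f_\infty \in A_p(\partial X)$ is non-constant; since $\partial X$ is connected, $u$ takes definitely different values on some pair of disjoint non-degenerate continua $A, B \subset \partial X$. The dictionary then yields $\mathcal F(A,B)$-admissible weights $\rho_k$ with $M_p(\rho_k)$ bounded uniformly in $k$. On the other hand, CLP at the critical exponent $p_0 = \Confdim(\partial X)$ together with approximate self-similarity forces $\Mod_p(A, B, G_k) \to \infty$ as $k \to \infty$ for every $p < p_0$ (a subcriticality phenomenon for combinatorial modulus below the Loewner exponent), yielding the contradiction.

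\textbf{Main obstacle.} The technical heart is the $A_p$--modulus dictionary and its interaction with the self-similar structure. Building the shadow weights $\rho_k$ with the correct admissibility constants and with $M_p(\rho_k) \lesssim \|df\|_{\ell_p}^p$ requires careful use of non-degeneracy to thicken boundary curves into genuine $X$-geodesics, and of bounded geometry to keep shadow overlaps uniformly finite. The subcriticality statement invoked in part (2) --- $\Mod_p(A,B,G_k) \to \infty$ for $p$ below the CLP exponent --- is where the combinatorial Loewner lower bound is genuinely used and where the argument for (2) departs decisively from that of (1).
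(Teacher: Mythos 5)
Your approach via the $A_p$--combinatorial-modulus dictionary is the correct mechanism, and it is essentially what underlies the result the paper cites from \cite{BourK} (Cor.\ 10.5); the paper itself gives no argument beyond that citation, so you are in effect reconstructing the cited corollary. However, there is a genuine gap in your argument for the converse half of part (1). You state the dictionary estimate $M_p(\rho_k) \le C\,\|df\|_{\ell_p(X^{(1)})}^p$ \emph{uniformly in $k$}, and then claim this can be assembled into an $\mathcal F_0$-admissible weight with $p$-mass tending to $0$. These two statements do not combine: a bound that is uniform in $k$ yields at best $\Mod_p(\mathcal F_0,G_k) \le C'$ for all $k$, which is perfectly consistent with $\Mod_p(\mathcal F_0,G_k) \not\to 0$ and hence with $p < \Confdim(\partial X)$, so no contradiction with Theorem \ref{modconf} is reached. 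What you actually need is the sharper \emph{tail} estimate: because the shadow of a vertex $v \in G_k^0$ consists of points of $X$ at combinatorial distance $\gtrsim k$ from the basepoint $x_0$, the oscillation $\rho_k(v)$ of $f$ on that shadow is governed only by the edges of $X^{(1)}$ at distance $\gtrsim k$; hence $M_p(\rho_k) \lesssim \sum_{\dist(e,x_0)\gtrsim k} |df(e)|^p$, which is a tail of the convergent series $\|df\|_{\ell_p}^p$ and therefore tends to $0$ as $k\to\infty$. This upgrade from ``bounded mass'' to ``vanishing mass'' is precisely the technical heart of the cited corollary, and your write-up does not contain it.

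Part (2) does not suffer from this problem, since there a uniform-in-$k$ mass bound already contradicts $\Mod_p(A,B,G_k)\to\infty$; your subcriticality claim is correct and follows from the CLP lower bound together with the power-mean inequality $\Mod_p(F,G_k) \ge |G_k^0|^{1-p/p_0}\,\Mod_{p_0}(F,G_k)^{p/p_0}$, but this should be made explicit rather than invoked as folklore. A minor point in the same part: you need to justify that the non-degenerate continua $A,B$ on which $u$ takes definitely different values actually exist; here you should invoke the arcwise connectedness built into the CLP hypothesis (Definition \ref{def-clp}) together with continuity and uniform perfectness near a pair of points separated by $u$.
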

\begin{proof} It follows immediately from Theorem \ref{modconf} in combination with 
Cor.10.5 in \cite{BourK} and Proposition \ref{critique-p}.
\end{proof}

\section{A qualitative bound for $p_{\textrm{sep}}(Y, W)$}\label{qualitative}

In this section we make a qualitative connection between 
the relative invariant $p_{\textrm{sep}}(Y,W)$
defined in Section \ref{relative}
and certain geometric properties of the pair $(Y,W)$
(see Corollary \ref{corqualitative}). This relies on the following result which is also of
independent interest. In the statement $\Delta (\cdot, \cdot)$ denotes the relative
distance, see (\ref{reldist}) for the definition.

\begin{theorem}
\label{thm-holderfunction}
For every $\al\in (0,1)$ there is a $D \ge 1$ with the following property.  
Suppose
$Z$ is a  bounded metric space, 
and $\C$ is a countable collection of closed positive diameter subsets of $Z$   
where $\Delta(C_1,C_2) \ge D$
for all $C_1,C_2\in \C$, $C_1\neq C_2$.   
Then, for every pair of distinct points $z_1, z_2 \in Z$, either
 $\{z_1, z_2\} \subseteq C$ for some $C \in \mathcal C$, or
there is a H\"older function $u\in C^\al(Z)$, 
such that :
\begin{enumerate}
\item  $u\restr_C$ is 
constant for every $C\in \C$.
\item If $C_1,C_2\in\C$ and  $u(C_1)= u(C_2)$, then $C_1=C_2$.
\item $u(z_1) \neq u(z_2)$. 
\end{enumerate}
\end{theorem}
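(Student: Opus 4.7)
The natural approach is to introduce a pseudo-metric on $Z$ that collapses each $C\in\mathcal{C}$ to zero size and to reduce the theorem to showing that this pseudo-metric is nondegenerate at $\{z_1,z_2\}$. Set
$$\ell(a,b):=\begin{cases}0 & \text{if }a,b\in C\text{ for some }C\in\mathcal{C},\\ d(a,b)^\alpha & \text{otherwise,}\end{cases}$$
and define $\rho(x,y):=\inf\sum_{i=0}^{n-1}\ell(x_i,x_{i+1})$ over finite chains $x=x_0,\ldots,x_n=y$. Then $\rho$ is a pseudo-metric bounded by $d^\alpha$, so $u(z):=\rho(z_1,z)$ is automatically $\alpha$-H\"older with constant $1$ and is constant on each $C\in\mathcal{C}$. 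Thus condition (1) comes for free, and condition (3) reduces to the nondegeneracy claim $\rho(z_1,z_2)>0$.

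The heart of the proof is to establish this nondegeneracy when $D\ge D(\alpha)$. Given any chain from $z_1$ to $z_2$, after merging and pruning I may assume it alternates bridging substeps of total length $T$ with zero-cost teleports through \emph{distinct} sets $C_{j_1},\ldots,C_{j_k}\in\mathcal{C}$ of diameters $r_1,\ldots,r_k$. Subadditivity of $t\mapsto t^\alpha$ and the triangle inequality give respectively $\mathrm{cost}\ge T^\alpha$ and $T\ge L-\sum_\ell r_\ell$, where $L:=d(z_1,z_2)$, while the relative distance hypothesis gives length $\ge D\min(r_\ell,r_{\ell+1})$ for the bridging segment between successive teleports. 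The key lemma I would prove is that at most one $C^*\in\mathcal{C}$ can satisfy $d(z_i,C^*)<\delta$ for both $i=1,2$, where $\delta:=L/(2(D+1))$; indeed, two such sets would themselves have $\Delta<D$. Conditioning on the first and last teleports $C_{\mathrm{first}},C_{\mathrm{last}}$ of the chain and analyzing cases yields a uniform positive lower bound: if $C^*$ is one of them, one uses the strict positivity of $d(z_i,C^*)$ for whichever $z_i\notin C^*$ (which exists since $\{z_1,z_2\}\not\subseteq C^*$); otherwise either one of the initial/final bridging segments has cost $\ge\delta^\alpha$, or $C_{\mathrm{first}}$ sits near $z_1$ and $C_{\mathrm{last}}$ near $z_2$, in which case the intermediate bridging must cover distance $\ge L-2\delta$ and by subadditivity contributes cost $\ge(L-2\delta)^\alpha>0$. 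In every case $\rho(z_1,z_2)>0$.

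To upgrade $u=\rho(z_1,\cdot)$ into a function satisfying (2), enumerate $\mathcal{C}=\{C_n\}_{n\ge 1}$, pick $x_n\in C_n$, and consider
$$\tilde u(z):=\rho(z_1,z)+\sum_{n\ge 1}c_n\,\rho(x_n,z).$$
Each summand is $\alpha$-H\"older with constant $1$ and constant on every $C\in\mathcal{C}$, so provided $\sum|c_n|<1/2$ the series is $\alpha$-H\"older and constant on each $C$, and $|\tilde u(z_1)-\tilde u(z_2)|\ge (1-\sum|c_n|)\,\rho(z_1,z_2)>0$ takes care of (3). For (2), I would invoke a Baire category argument in the space $\{(c_n)\in\ell^1:\sum|c_n|<1/2\}$: for each pair $m\ne n$ the set on which $\tilde u(C_m)\ne\tilde u(C_n)$ is open, and varying $c_m$ alone shifts $\tilde u(C_m)-\tilde u(C_n)$ by $-c_m\cdot\rho(C_m,C_n)\ne 0$ (using $x_m\in C_m$ and the nondegeneracy just proved), so the set is also dense. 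The intersection over the countably many pairs is comeager and nonempty, yielding a $\tilde u$ satisfying all three conditions.

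The main obstacle is the nondegeneracy estimate. A chain could in principle combine very small bridging hops into tiny sets clustered near $z_1$ or $z_2$ with free teleports through much larger sets; the relative distance hypothesis $\Delta\ge D$ is precisely the mechanism that forbids such compound exploitation, forcing any pair of distinct teleports to pay a bridging cost at least $D$ times the smaller diameter and preventing more than one $C$ from clustering close to both endpoints simultaneously. The constant $D=D(\alpha)$ is calibrated so that both branches of the dichotomy above return a positive bound; the restriction $\alpha<1$ enters crucially through the subadditivity $(a+b)^\alpha\le a^\alpha+b^\alpha$, consistent with the counterexample following the theorem showing that $\alpha=1$ is genuinely impossible.
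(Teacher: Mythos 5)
Your construction via the chain pseudo-metric
\[
\rho(x,y)=\inf\sum_i\ell(x_i,x_{i+1}),\qquad
\ell(a,b)=\begin{cases}0&\text{if }a,b\in C\text{ for some }C\in\C,\\ d(a,b)^\al&\text{otherwise,}\end{cases}
\]
is a genuinely different route from the paper's (the paper builds $u$ explicitly as a convergent sum $\sum_k v_k$ of Lipschitz corrections, one per dyadic scale, with Lipschitz constants $L_k=e^{\la k}$ controlled by Lemma \ref{lem-supportseparation}).  Your reduction of (1) and (3) to the single inequality $\rho(z_1,z_2)>0$ is correct, and the Baire-category upgrade to (2) is fine modulo that same nondegeneracy.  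But the nondegeneracy argument you sketch has a genuine gap, and it occurs precisely at the step where $D$ large must enter.

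The problem is in case (b): ``$C_{\mathrm{first}}$ sits near $z_1$ and $C_{\mathrm{last}}$ near $z_2$, in which case the intermediate bridging must cover distance $\ge L-2\delta$.''  This is false: the intermediate part of the chain contains free teleports through other elements of $\C$, and each such teleport covers up to its diameter in $d$-distance at zero cost.  Thus the total \emph{bridging} length is bounded below only by $L-2\delta-\sum r_\ell$ over the intermediate teleports, and that quantity can be $0$ or negative.  Your argument never exploits the separation hypothesis $\Delta\ge D$ anywhere except through $\delta=L/(2(D+1))$, which is the wrong place: the key lemma about a unique $C^*$ near both endpoints is fine, but it controls only what happens within $\delta$ of $z_1$ and $z_2$ and says nothing about the interior of the chain, which is where all the cost can disappear.

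To see that the sketch cannot be repaired at this level of generality, consider a Cantor construction on $[0,1]$: at each step remove the middle proportion $\mu=1/(2D+1)$ from each interval, and let $\C$ be the set of removed gaps.  One checks $\min\Delta(C,C')=(1-\mu)/(2\mu)=D$.  A self-similarity computation shows $\rho(0,1)=0$ whenever $2\big(\tfrac{1-\mu}{2}\big)^\al<1$, i.e.\ whenever $\al>\log 2/\log(2+1/D)$; in particular, for every fixed $\al<1$ there are values $D>1$ for which the pseudo-metric degenerates.  Your case analysis shows no dependence of the bound on the scales of the intermediate $\C$'s and would return a (spurious) positive bound there.  Any correct argument must track the separation hypothesis across all dyadic scales simultaneously, as the paper does with the feasibility condition $\hat L_k\le L_k$ for the sequence $L_k=e^{\la k}$ (requiring $e^{-\la n}<1/2$ with $n\approx\log_2 D$); in your framework that means a scale-by-scale recursion on $\rho$ rather than a single split on the first/last teleport.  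As it stands, the heart of the proof is missing.

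Two smaller points.  First, the case $z_1\in C^*$ (or $z_2\in C^*$) is not handled: then the first hop has zero cost and the bound $d(z_1,C^*)^\alpha>0$ is not available; you would need a separate argument.  Second, in your Baire step you invoke ``the nondegeneracy just proved'' to deduce $\rho(C_m,C_n)>0$; this is again the same open claim, so (2) is also contingent on the gap being filled.
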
 

\smallskip

\begin{remark*}

1) The countability of $\C$ is only used to obtain (2); one gets plenty of functions 
without countability.


2) The argument given in the proof of Prop.1.3 of \cite{B2}
shows that there is a function $\alpha : (0, \infty) \to (0,1)$
with the following property. Let $Z$ be bounded metric space, 
and let $\C$ be a countable collection of closed positive diameter subsets of $Z$ 
with $\Delta(C_1,C_2) \ge D >0$
for all $C_1,C_2\in \C$, $C_1\neq C_2$. Then for $\alpha = \alpha(D)$ there is a H\"older 
function $u\in C^\al(Z)$ which satisfies conditions (1),(2),(3) above.
Theorem \ref{thm-holderfunction} above asserts that we can choose the function $\alpha$ so that
$\alpha(D) \to 1$ when $D \to \infty$.

\end{remark*}

\smallskip

To prove Theorem \ref{thm-holderfunction},  we consider a bounded metric space $Z$, 
and a 
countable collection $\mathcal C$ of closed positive diameter subsets of $Z$,   
such that $\Delta(C_1,C_2) \ge D$
for all $C_1,C_2\in \C$, $C_1\neq C_2$; here $D$ is a constant that is subject
to several lower bounds during the course of the proof.  
We can assume that $\diam(Z) \le 1$ 
by rescaling the metric of $Z$ if necessary. 
We will assume  
that $D > 8$, and pick  $\Lambda \in [4,\frac{D}{2})$ (we will need $\Lambda \ge 4$ 
in the proof of Sublemma \ref{lem-lipschitzbound}).   
Let $r_k=2^{-k}$, and let $\C_k=\{C\in \C ~ | ~ \diam(C)\in (r_{k+1},r_k]\}$.
Since $\diam(Z) \le 1$ we have $\mathcal C = \cup _{k \ge 0} \;\mathcal C _k$.
Given  a Lipschitz function $v_{-1} : Z \to \mathbb R$, we will construct 
the H\"older function $u$ as a convergent series $\sum_{j=-1}^\infty v_j$ where for every 
$k \ge 0$ : 
\begin{itemize}
\item $v_k$ is Lipschitz.
\item $v_k$ is supported in $\cup_{C\in \C_k}\;N_{\Lambda r_k}(C)$.
\item For every $0\le j\leq k$ and  $C\in \C_j$, the partial sum $u_k = \sum_{j=-1} ^k v_j$ is 
constant on $C$.
\end{itemize}
Thus one may think of $v_k$ as a ``correction'' which adjusts $u_{k-1}$ so that it becomes constant
on elements of $\C_k$.

\begin{lemma}
\label{lem-supportseparation}
Suppose  $0 \le i,j<k$, $i\neq j$, and there are $C_1\in \C_i$, $C_2\in \C_j$ such that
$$
\dist(N_{\Lambda r_i}(C_1),
N_{\Lambda r_j}(C_2))\leq r_k\,.
$$
Then 
 $|i-j|\geq \log_2(\frac{D}{6\Lambda})\,.$
\end{lemma}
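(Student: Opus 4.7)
The plan is a direct computation chaining together the separation hypothesis on $\C$ with the definition of the scales $\C_k$. I would begin by assuming without loss of generality that $i<j$ (so $r_i>r_j>r_k$ since $j<k$), and note that the minimum diameter appearing in $\Delta(C_1,C_2)$ is then $\diam(C_2)$. From $C_2\in\C_j$ we have $\diam(C_2)>r_{j+1}$, while from $C_1\in\C_i$ we have $\diam(C_1)\leq r_i$. Combining with the standing assumption $\Delta(C_1,C_2)\geq D$ gives a lower bound
\[
\dist(C_1,C_2)\;\geq\;D\cdot\diam(C_2)\;>\;D\,r_{j+1}\;=\;\tfrac{D}{2}\,r_j.
\]

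Next I would produce a matching upper bound from the hypothesis. Any point in $N_{\Lambda r_i}(C_1)$ lies within $\Lambda r_i$ of $C_1$ and similarly on the other side, so the triangle inequality yields
\[
\dist(C_1,C_2)\;\leq\;\dist\!\bigl(N_{\Lambda r_i}(C_1),N_{\Lambda r_j}(C_2)\bigr)+\Lambda r_i+\Lambda r_j\;\leq\;r_k+\Lambda r_i+\Lambda r_j.
\]
Since $i<j<k$, all of $r_k,r_j$ are $\leq r_i$, and using $\Lambda\geq 1$ I can crudely bound the right-hand side by $(1+2\Lambda)r_i\leq 3\Lambda r_i$.

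Finally I would equate the two estimates: $\tfrac{D}{2}r_j<3\Lambda r_i$, which rearranges to $2^{j-i}>\tfrac{D}{6\Lambda}$, i.e. $|i-j|=j-i\geq\log_2\!\bigl(\tfrac{D}{6\Lambda}\bigr)$, as required. There is really no obstacle here — the lemma is a bookkeeping statement whose only subtlety is choosing which of the $1+2\Lambda$ versus $3\Lambda$ simplifications to use; I picked $3\Lambda$ to produce the constant $6\Lambda$ that appears in the statement. The role this lemma will play later is to guarantee that, at the scale $r_k$, corrections $v_i,v_j$ from previous generations whose supports come near each other must live on very different scales (exponentially separated in $D$), which is exactly what one needs to keep the cumulative H\"older norm of $\sum v_k$ under control.
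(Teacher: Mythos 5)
Your proof is correct and follows essentially the same route as the paper's: lower-bound $\dist(C_1,C_2)$ by $\tfrac{D}{2}r_j$ from the relative-distance hypothesis (using $\min\diam > r_{j+1}$), upper-bound it by $3\Lambda r_i$ from the triangle inequality applied to the neighborhoods, and compare. The extra bookkeeping you include (identifying $\diam(C_2)$ as the minimum, passing through $(1+2\Lambda)r_i$ before invoking $\Lambda\ge 1$) just makes explicit steps the paper leaves implicit.
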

\begin{proof}
We may assume $i<j$.
We have
$$
D\leq\Delta(C_1,C_2)\leq \frac{\dist(C_1,C_2)}{\left(\frac{r_j}{2}\right)}
\implies \dist(C_1,C_2)\geq \frac{Dr_j}{2}
$$
and
$$
\dist(C_1,C_2)\leq \Lambda r_i+\Lambda r_j+r_k\leq 3\Lambda r_i
$$
so
$$
\frac{r_i}{r_j}\geq \frac{D}{6\Lambda}\implies j-i\geq \log_2\left(\frac{D}{6\Lambda}\right)\,.
$$
\end{proof}

Let $n$ be the integer part of 
$\log_2(\frac{D}{6\Lambda})$; 
we will assume that $n\geq 1$. 

Let
$\{L_j\}_{j\in\Z}\subset [0,\infty)$ be an increasing sequence such that $L_j=0$ for
all $j\leq -2$, and let $\hat L_k=\sum_{j=1}^\infty L_{k-jn}$ (this is a finite sum since
$L_j=0$ for $j\leq -2$).

\begin{definition}
The sequence $\{L_j\}$ is \emph{feasible} if $L_k\geq  \hat L_k$
for all $k\geq 0$.
\end{definition}

As an example let $L_j=e^{\la j}$ for $j\geq -1$, and $L_j=0$ for $j\leq -2$.
Then $\{L_j\}$ is feasible if $e^{-\la n}<\frac12$.  In particular, we may take $\la$ small
when $n$ is large.

\begin{lemma}
Suppose $\{L_j\}$ is feasible, and $v_{-1}:Z\ra \R$ is $L_{-1}$-Lipschitz.
Then there is a sequence $\{v_k\}$, where for every $k \ge 0$\,{\rm :}
\begin{enumerate}
\item  $v_k$ is $L_k$-Lipschitz.
\item $\spt(v_k)\subset N_{\Lambda r_k}(\cup_{C\in\C_k}C)$.
\item $\|v_k\|_{C^0}\leq 2L_kr_k$.
\item For every $0 \le j\leq k$, and every $C\in \C_j$, the partial sum
 $u_k=\sum_{j = -1} ^k v_j$ is constant on $C$ .
\end{enumerate}
\end{lemma}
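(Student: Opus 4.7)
I plan to construct $\{v_k\}$ by induction on $k$. At step $k$, for every $C \in \C_k$ I fix a basepoint $c_C \in C$ and let $\phi_C : Z \to [0,1]$ be the cutoff $\phi_C(z) = \max\bigl(0,\, 1 - \dist(z,C)/(\Lambda r_k)\bigr)$, which equals $1$ on $C$, is supported in $N_{\Lambda r_k}(C)$, and is $(\Lambda r_k)^{-1}$-Lipschitz. I then define
\[
v_k(z) \;=\; \sum_{C \in \C_k} \phi_C(z)\bigl(u_{k-1}(c_C) - u_{k-1}(z)\bigr),
\]
where $u_{k-1} = \sum_{j=-1}^{k-1} v_j$ is obtained from the inductive hypothesis. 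The separation $\Delta(C,C') \ge D > 2\Lambda$ for distinct $C, C' \in \C_k$ forces $\dist(C,C') > 2\Lambda r_k$, so the neighborhoods $N_{\Lambda r_k}(C)$ are pairwise disjoint; the sum therefore has at most one nonzero term at each point and has the support prescribed in (2). Since $\phi_C \equiv 1$ on $C$, $u_k|_C \equiv u_{k-1}(c_C)$ is constant, giving the new case of (4).

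Next I would check that $u_k$ also remains constant on every $C' \in \C_j$ with $0 \le j < k$. Since $C' \in \C_j$ implies $\diam C' > r_k \ge \diam C$ for any $C \in \C_k$, one has $\min(\diam C, \diam C') = \diam C > r_{k+1}$, and the separation hypothesis gives
\[
\dist(C,C') \;\ge\; D\diam C \;>\; D r_{k+1} \;=\; D r_k/2 \;>\; \Lambda r_k.
\]
Hence $C' \cap N_{\Lambda r_k}(C) = \emptyset$ for every $C \in \C_k$, so $v_k|_{C'} \equiv 0$ and $u_k|_{C'} = u_{k-1}|_{C'}$ remains constant by induction.

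The core of the argument is (1) and (3). Applying the product rule to $v_k = \phi_C\cdot (u_{k-1}(c_C) - u_{k-1})$ on each $N_{\Lambda r_k}(C)$ bounds both $\|v_k\|_{C^0}$ and $\Lip(v_k)$ by absolute-constant multiples of $\Lip_{\mathrm{loc}}(u_{k-1})$ on $N_{\Lambda r_k}(C)$ (times $r_k$ for the $C^0$ bound). So the key reduction is $\Lip_{\mathrm{loc}}(u_{k-1}) \lesssim L_k$ on $N_{\Lambda r_k}(C)$, and this is where Lemma \ref{lem-supportseparation} enters: if $0 \le j < j' < k$ both have $v_j, v_{j'}$ nonzero at a common point $z$, the defining neighborhoods $N_{\Lambda r_j}(C_j) \ni z$ and $N_{\Lambda r_{j'}}(C_{j'}) \ni z$ are at distance $0 \le r_K$ for every $K>\max(j,j')$, so the lemma forces $j'-j\ge n$. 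Thus the set of active scales at $z$ is $n$-spaced in $\{-1,0,\ldots,k-1\}$, and a direct comparison as in the previous paragraph further bounds the top active scale $j^*$ above by $k$ minus a quantity growing with $\log_2(D/\Lambda)$. Summing $L_j$ over such a pattern, using feasibility $L_{j^*}\ge \hat L_{j^*}$ for the tail and $L_k\ge \hat L_k$ for the overall scale, yields $\Lip_{\mathrm{loc}}(u_{k-1}) \le C L_k$ with $C$ depending only on $\Lambda$; substituting into the product-rule estimate delivers (1) and (3).

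The main obstacle is exactly the bookkeeping of these absolute constants: the cutoff costs a factor of $(2\Lambda+1)/\Lambda$, the diameter of $N_{\Lambda r_k}(C)$ costs $\Lambda+1$, and the estimate $\Lip_{\mathrm{loc}}(u_{k-1})\le L_{-1} + L_{j^*} + \hat L_{j^*}$ picks up a top-scale term beyond the $\hat L_k$ appearing in feasibility. All of these are controlled by the fact that $D$ (hence $n$ and the gap $k-j^*$) is large in the setting of Theorem~\ref{thm-holderfunction}, so the resulting geometric-series constant is small and the induction closes with the prescribed constants $L_k$ and $2L_kr_k$.
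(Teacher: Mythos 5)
Your construction is genuinely different from the paper's: you build $v_k$ by an explicit partition-of-unity formula
\[
v_k \;=\; \sum_{C\in\C_k}\phi_C\bigl(u_{k-1}(c_C)-u_{k-1}\bigr),
\]
whereas the paper defines $\bar v_k$ only on the subset $W_k=(Z\setminus\cup_{C\in\C_k}N_{\Lambda r_k}(C))\sqcup(\cup_{C\in\C_k}C)$, proves directly that $\Lip(\bar v_k)\le \hat L_k$ (Sublemma~\ref{lem-lipschitzbound}), and then invokes McShane's extension lemma to obtain $v_k$ with the \emph{same} Lipschitz constant $\hat L_k$ and the same sup norm. The McShane step is what makes the constants so clean: feasibility $\hat L_k\le L_k$ then hands you exactly (1) and (3) with no loss. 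Your product-rule estimate, by contrast, inherits the multiplicative factor $\frac{2\Lambda+1}{\Lambda}$ (the sum $\Lip(\phi_C)\cdot\|u_{k-1}(c_C)-u_{k-1}\|_{C^0(N_{\Lambda r_k}(C))}+\Lip(u_{k-1})$), so you obtain $\Lip(v_k)\le\frac{2\Lambda+1}{\Lambda}\,\Lip_{\mathrm{loc}}(u_{k-1})$ rather than $\Lip_{\mathrm{loc}}(u_{k-1})$.

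This is where the gap is. The lemma is asserted under \emph{only} the feasibility hypothesis $L_k\ge\hat L_k$, i.e.\ $e^{-\la n}<\tfrac12$. Under marginal feasibility (say $e^{-\la n}=0.49$), the spacing estimate gives $\Lip_{\mathrm{loc}}(u_{k-1})\lesssim\hat L_k$, and then your $\Lip(v_k)\le\frac{2\Lambda+1}{\Lambda}\hat L_k\ge\frac{9}{4}\hat L_k$ (for $\Lambda=4$) exceeds $L_k\approx\hat L_k$, so the induction hypothesis (1) fails to close. Your remark that ``$D$ is large in the setting of Theorem~\ref{thm-holderfunction}'' salvages the \emph{theorem} (one simply increases $D$ to force $n$ much larger than feasibility demands), but it does not prove the \emph{lemma as stated}, whose hypothesis is feasibility alone. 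To repair the proposal one would either have to replace the cutoff construction by a loss-free extension such as McShane, or weaken (1) to an $L_k$-Lipschitz bound up to a constant depending on $\Lambda$ and propagate that constant into the Hölder estimate in the proof of Theorem~\ref{thm-holderfunction}. Two smaller points: the threshold for disjointness of the sets $N_{\Lambda r_k}(C)$, $C\in\C_k$, is $D>4\Lambda$ rather than $D>2\Lambda$ (since $\min(\diam C,\diam C')>r_{k+1}=r_k/2$, not $r_k$); the conclusion still holds because the running hypothesis $n\ge 1$ forces $D\ge 12\Lambda$, but you should correct the stated inequality. Also, by fixing $u_C=u_{k-1}(c_C)$ you discard the freedom of choosing $u_C$ in an interval of length $2\hat L_kr_k$ that the paper retains; this freedom is not needed for the lemma, but it is what the paper later uses to arrange that $u$ takes distinct values on distinct elements of $\C$ and to separate $z_1$ and $z_2$, so your construction would need to be augmented downstream.
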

\begin{proof}

Assume inductively that for some $k\geq 0$, there exist functions
$v_{-1},\ldots,v_{k-1}$ satisfying the conditions of the lemma.
For every $C\in \C_k$, we would like to specify the constant value of the function
$u_k$.  To that end, choose a point $p_C\in C$ and some
$u_C\in [u_{k-1}(p_C)-\hat L_kr_k,u_{k-1}(p_C)+\hat L_kr_k]$.
Let 
$$
W_k=\big(Z\setminus (\cup_{C\in \C_k}N_{\Lambda r_k}(C))\big)
\sqcup\big(\cup_{C\in \C_k}C\big)\,,
$$ 
and
define $\bar v_k:W_k\ra \R$ by 
$$
\bar v_k(x)=
\begin{cases}
0 &\quad x\in Z\setminus \cup_{C\in \C_k}N_{\Lambda r_k}(C)\\
u_C-u_{k-1}(x) & \quad x\in  C\in \C_k\;.
\end{cases}
$$
Since $\Lambda < \frac{D}{2}$, the subset $W_k$ contains every 
$C \in \mathcal C_i$ with $i <k$.
Thus $\bar v_k(x)=0$ for $x \in C \in \mathcal C_i$ and for $i <k$.

\begin{sublemma}
\label{lem-lipschitzbound}
One has : 
$$
\Lip(\bar v_k)\leq \sum_{j=1}^\infty L_{k-jn}=\hat L_k\,.
$$

\end{sublemma}
\begin{proof}[Proof of the sublemma.]
Pick $x,y\in W_k$.

{\em Case 1. Suppose $x,y\in C$ for some $C\in \C_k$.}
Then
\begin{align*}
\frac{|\bar v_k(x)-\bar v_k(y)|}{d(x,y)}&=\frac{|u_{k-1}(x)-u_{k-1}(y)|}{d(x,y)}\\
&\leq\sum\{\Lip(v_j)\mid j<k,\;\spt(v_j)\cap C\neq\emptyset\}\\
&\leq \sum\{L_j\mid j<k,\;\spt(v_j)\cap C\neq \emptyset\}\\
&\leq \sum\{L_j\mid j<k,\;\exists C_1\in \C_j\;\text{s.t.}\;N_{\Lambda r_j}(C_1)\cap C\neq \emptyset\}\\
&\leq \sum_{j\ge 1} L_{k-jn}=\hat L_k\,,
\end{align*}
since the sequence $L_j$ is increasing, and consecutive elements of 
the set
$\{j ~ | ~ j<k,\;\exists C_1\in \C_j\;\text{s.t.}\;N_{\Lambda r_j}(C_1)\cap C\neq \emptyset\}$
differ by at least $n$, by Lemma \ref{lem-supportseparation}.

{\em Case 2. There is a $C\in \C_k$ such that $x\in C$, and $y\notin C$.}
Then $d(x,y)\geq \Lambda r_k$.   Reasoning as in Case 1, we have  $|u_{k-1}(p_C)-u_{k-1}(x)|
\leq \hat L_kr_k$.  Hence 
\begin{equation*}\label{Case2}
|\bar v_k(x)|=|u_C-u_{k-1}(x)|\leq |u_{k-1}(p_C)-u_{k-1}(x)|+\hat L_kr_k\leq 2\hat L_kr_k\,.
\end{equation*}
Therefore
$$
|\bar v_k(x)-\bar v_k(y)|\leq |\bar v_k(x)|+|\bar v_k(y)|
\leq 4\hat L_kr_k,
$$
so, since $\Lambda \ge 4$, 
$$
\frac{|\bar v_k(x)-\bar v_k(y)|}{d(x,y)}\leq \frac{4}{\Lambda}\hat L_k
\leq \hat L_k\,.
$$
Thus the sublemma holds.
\end{proof}

By McShane's extension lemma (see \cite{He}), there is an $\hat L_k$-Lipschitz
extension  $v_k:Z\ra \R$ of $\bar v_k$, where $\|v_k\|_{C^0}\leq \|\bar v_k\|_{C^0}\leq
2\hat L_kr_k$.  Since $\hat L_k\leq L_k$ by the feasibility assumption,
$v_k$ is $L_k$-Lipschitz and $\|v_k\|_{C^0}\leq 2 L_kr_k$.   
Moreover, by construction, $u_k$ is constant equal to $u_C$ on every
$C \in \mathcal C _k$, and is equal to $u_{k-1}$ on every $C \in \mathcal C_i$ with $i<k$.
Therefore condition (4) is satisfied and the lemma holds by induction.
\end{proof}

\begin{proof}[Proof of Theorem \ref{thm-holderfunction}.]
To get the H\"older bound, we use the feasible sequence $L_j = e^{\la j}$, and keep in mind that
we may take $\la$ close to $0$ provided $D$ is large.
Suppose  $x,y\in Z$ and $d(x,y)\in [r_{k+1},r_k]$. 
Then
\begin{align*}
|u_{k-1}(x)-u_{k-1}(y)|&
\leq r_k\sum\{L_j\mid j<k,\spt(v_j)\cap\{x,y\}\neq\emptyset\}\\
&\leq r_k\hat L_k\leq L_kr_k\,.
\end{align*}
Also,
\begin{align*}
\arrowvert \sum_{j\geq k}v_j(x)-\sum_{j\geq k}v_j(y)\arrowvert
&\leq 2\sum_{j\geq k}\|v_j\|_{C^0}\leq 4\sum_{j\geq k}L_jr_j\\
&=4\sum_{j\geq k}e^{\la j}2^{-j}
\leq 9 L_kr_k
\end{align*}
when $\la $ is small.  So
$$
\frac{|u(x)-u(y)|}{d(x,y)^\al}\leq \frac{10L_kr_k}{\left(\frac{r_k}{2}\right)^\al}
=10\cdot 2^\al\cdot \left(e^\la\cdot 2^{\al-1}\right)^k
$$
which is bounded independent of $k$ when $\la $ is small.

It is clear from the construction that if $\C$ is countable, then we may arrange that $u$ takes
different values on different $C$'s. Similarly, for a given pair of distinct points
$z_1, z_2 \in Z$, we can perform the above construction so that $u(z_1) \neq u(z_2)$. 
\end{proof} 

 We now give an application to hyperbolic spaces. Again we consider $Y$  
a hyperbolic non-degenerate simply connected
metric simplicial complex, with links of uniformly bounded complexity, and all simplices isometric to
regular Euclidean simplices with unit length edges.

\begin{corollary}\label{corqualitative} Let $Y$ be as above.
For every $\alpha \in (0,1)$, $C \ge 0$, there is a $D \ge 1$
with the following properties.
Suppose $W\subset Y$ is a subcomplex of $Y$
such that
\begin{enumerate}
\item Every connected component of $W$ is  $C$-quasiconvex in $Y$.
\item For every $y \in W$ there is a
complete geodesic $\ga\subset W$ lying in the same connected component of 
$W$, such that $\dist(y, \ga) \le C$.
\item The distance between distinct components of $W$ is at least $D$.
\end{enumerate} 
Then $p_{\textrm{sep}}(Y,W) \le \frac{1}{\alpha}\Confdim (\partial Y)$. 
\end{corollary}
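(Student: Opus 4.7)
The plan is to reduce the corollary to Theorem \ref{thm-holderfunction} applied to the boundary $\partial Y$, via three coordinated steps: (i)~translate separation by classes in $\ell_p H^1_{\textrm{cont}}(Y,W)$ into the existence of functions on $\partial Y$ that are constant on each $\partial H$; (ii)~convert the geometric separation hypothesis on components of $W$ into a large relative distance bound between the corresponding limit sets in $\partial Y$; and (iii)~upgrade the $\alpha$-H\"older functions produced by Theorem \ref{thm-holderfunction} into honest $\ell_p$-cohomology classes, using that $p \alpha > \Confdim(\partial Y)$.

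To set things up, fix $\alpha \in (0,1)$ and $p > \frac{1}{\alpha} \Confdim(\partial Y)$. Choose $Q$ with $\Confdim(\partial Y) < Q < p\alpha$, and equip $\partial Y$ with an Ahlfors $Q$-regular visual metric $d$ of parameter $a>1$ (such a metric exists by the definition of $\Confdim$). Let $\mathcal{C} = \{\partial H : H \text{ is a connected component of } W\}$; the collection is countable since $Y$ has countably many simplices, and each $\partial H$ is a closed set of positive diameter because hypothesis (2) provides a bi-infinite geodesic inside $H$ within bounded distance of every point of $H$. The key geometric input, which is a standard consequence of $C$-quasiconvexity and formulas (\ref{visual})--(\ref{visualbis}), is the following: if $H_1, H_2$ are distinct components of $W$ with $\dist(H_1,H_2) \ge D$, then $\Delta(\partial H_1, \partial H_2) \ge D'(D)$, where $D'(D) \to \infty$ as $D \to \infty$ with a rate depending only on $C$ and the hyperbolicity constant of $Y$.

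Let $D'$ be the constant given by Theorem \ref{thm-holderfunction} for exponent $\alpha$. Choose $D \ge 1$ large enough that hypothesis (3) of the corollary implies $\Delta(\partial H_1, \partial H_2) \ge D'$ for all distinct components $H_1, H_2$ of $W$. Given any pair of distinct points $z_1, z_2 \in \partial Y$ with $\{z_1,z_2\} \nsubseteq \partial H$ for every component $H$ of $W$, Theorem \ref{thm-holderfunction} produces a function $u \in C^\alpha(\partial Y)$ which is constant on every $\partial H$, takes distinct values on distinct limit sets, and satisfies $u(z_1) \ne u(z_2)$.

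It remains to realize $u$ as the boundary extension $f_\infty$ of some $[f] \in \ell_p H^1_{\textrm{cont}}(Y,W)$. Following Elek's construction adapted to the relative setting, define $f: Y^{(0)} \to \mathbb{R}$ by $f(x) = u(\pi(x))$, where $\pi(x)$ is the endpoint of a geodesic ray from a fixed basepoint through $x$, arranged so that $\pi(x) \in \partial H$ whenever $x$ lies in the component $H$ of $W$; this last modification is possible thanks to condition (2), and since $u$ is constant on each $\partial H$, the function $f$ is automatically constant on every connected component of $W$. A standard cochain estimate yields $|df(e)| \lesssim a^{-\alpha \ell(e)}$ where $\ell(e)$ is the distance from the basepoint to the edge $e$, so by bounded geometry and $Q$-regularity of $\partial Y$,
\[
\|df\|_{\ell_p}^p \;\lesssim\; \sum_{k \ge 0} \# \{e : \ell(e) \asymp k\} \cdot a^{-\alpha p k} \;\lesssim\; \sum_{k \ge 0} a^{(Q - \alpha p) k} < \infty,
\]
since $\alpha p > Q$. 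Thus $[f] \in \ell_p H^1_{\textrm{cont}}(Y,W)$ and $f_\infty(z_1) \ne f_\infty(z_2)$, completing the argument. The main technical obstacle is the coordinated choice in step (iii): one must design the extension $\pi$ so that it is simultaneously constant on each component of $W$ (to lie in the relative complex) and varies slowly enough that the Hölder regularity of $u$ transfers into $\ell_p$-summability of $df$; condition (2) is essential here, as it guarantees that each component $H$ is ``thick enough'' at infinity for the projection to $\partial H$ to be geometrically compatible with the shadow map on $Y \setminus W$.
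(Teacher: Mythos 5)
Your overall strategy is the same as the paper's: translate separation in $\ell_pH^1_{\cont}(Y,W)$ into the existence of boundary functions constant on each $\partial H$, convert the geometric separation of components into a relative distance bound between their limit sets, invoke Theorem~\ref{thm-holderfunction}, and then realize the resulting H\"older function as an $\ell_p$-cohomology class. The numerology $p\alpha>Q>\Confdim(\partial Y)$ is also correct in spirit. However, there is a genuine gap at the crucial realization step, stemming from a conflation of two different metrics on $\partial Y$.

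You assume that $\partial Y$ carries an ``Ahlfors $Q$-regular visual metric $d$ of parameter $a>1$'' with $Q<p\alpha$ close to $\Confdim(\partial Y)$, saying this exists ``by the definition of $\Confdim$.'' It does not: the definition only provides an Ahlfors $Q$-regular metric $\delta$ \emph{quasi-M\"obius equivalent} to a visual metric, not a visual metric itself. This matters at two points in your argument. Your shadow estimate $|df(e)|\lesssim a^{-\alpha\ell(e)}$ requires $d$ to be a genuine visual metric with exponential parameter $a$, whereas your counting bound $\#\{e:\ell(e)\asymp k\}\lesssim a^{Qk}$ requires that same metric to be Ahlfors $Q$-regular with $Q$ near $\Confdim(\partial Y)$; in general no single metric satisfies both. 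The paper avoids this by keeping the two metrics distinct: a visual metric $d$ is used for the shadow/continuity argument (where only \emph{some} positive H\"older exponent w.r.t.\ $d$ is needed, obtainable from the H\"older equivalence of $d$ and $\delta$), while the $\ell_p$-summability is obtained by applying Elek's extension process to $\delta^{(\alpha+1)/2}$, a $\tfrac{2Q}{\alpha+1}$-regular metric quasi-M\"obius equivalent to a visual one. Your direct summation estimate is precisely what Elek's theorem is packaging, and without invoking it (or a Bonk--Schramm type uniformization to realize $\delta$ as a visual metric on a replacement complex, which would then create additional issues for transporting the subcomplex $W$) the computation does not go through. You should also note that the continuity of $f$ at $\partial Y$, needed for membership in $\ell_pH^1_{\cont}(Y,W)$, has to be argued separately -- the paper does this using the H\"older equivalence of $d$ and $\delta$ together with the nondegeneracy hypothesis -- and your proposal elides this entirely.
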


The proof relies on Theorem \ref{thm-holderfunction} and on the following elementary lemma.
 
\begin{lemma}\label{lemmaqualitative}
Let $Y$ be a hyperbolic space and let $d$ be a visual metric on $\partial Y$.
For every $C \ge 0$ there are constants  $A > 0$, $B\ge 0$
such that for every $C$-quasiconvex subsets $H_1, H_2 \subset Y$ 
with non empty limit sets, one has
$$\Delta (\partial H_1, \partial H_2) \ge A \cdot a ^{\frac{1}{2} \dist(H_1,H_2)} - B,$$
where $a$ is the exponential parameter of $d$ (see (\ref{visual})). 
Moreover $A,B$ depend only on $C$, the hyperbolicity constant of $Y$, 
and the constants of the visual metric $d$.
\end{lemma}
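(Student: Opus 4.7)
The plan is to translate the claim into estimates on Gromov products based at $x_0$, using the standard equivalences $d(\xi,\eta)\asymp a^{-(\xi\cdot\eta)_{x_0}}$ and $(\xi\cdot\eta)_{x_0}=\dist(x_0,(\xi,\eta))+O(\delta)$. Writing $d_i:=\dist(x_0,H_i)$ and $D:=\dist(H_1,H_2)$, the lemma will follow from two asymmetric estimates:
$$(\xi\cdot\eta)_{x_0}\ge d_i-K\quad\text{when }\xi,\eta\in\partial H_i,\qquad (\xi\cdot\eta)_{x_0}\le\tfrac12(d_1+d_2-D)+K\quad\text{when }\xi\in\partial H_1,\,\eta\in\partial H_2,$$
where $K$ depends only on $C$, the hyperbolicity constant of $Y$, and the constants of the visual metric.

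The first estimate is immediate from $C$-quasiconvexity: for sequences $u_n\to\xi$ and $v_n\to\eta$ in $H_i$, the geodesics $[u_n,v_n]$ remain in $\mathcal N_C(H_i)$, so $\dist(x_0,[u_n,v_n])\ge d_i-C$, which passes to the boundary. This yields $\diam\partial H_i\le A_1\,a^{-d_i}$, and hence $\min_i\diam\partial H_i\le A_1\,a^{-\max(d_1,d_2)}\le A_1\,a^{-D/2}$ using the elementary bound $\max(d_1,d_2)\ge\tfrac12(d_1+d_2)\ge\tfrac12 D$.

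For the second estimate, let $p_i\in H_i$ be nearest points for $x_0$. The argument will exploit two folklore properties of nearest-point projection onto quasiconvex sets in a hyperbolic space. First, the geodesic $[x_0,u]$ passes within $O(C+\delta)$ of $p_1$ for every $u\in H_1$, so $|x_0-u|\ge d_1+|p_1-u|-O$, and symmetrically $|x_0-v|\ge d_2+|p_2-v|-O$ for $v\in H_2$. Second, when $D$ exceeds a threshold $D_0=D_0(C,\delta)$, the broken path $[u,p_1]\cup[p_1,p_2]\cup[p_2,v]$ is a uniform quasigeodesic, so by quasigeodesic stability $|u-v|\ge|u-p_1|+|p_1-p_2|+|p_2-v|-O\ge|u-p_1|+D+|p_2-v|-O$. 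Substituting these three inequalities into $(u\cdot v)_{x_0}=\tfrac12(|x_0-u|+|x_0-v|-|u-v|)$ produces $(u\cdot v)_{x_0}\le\tfrac12(d_1+d_2-D)+O$, and choosing $u_n\to\xi,v_n\to\eta$ transfers this bound to the Gromov product at the boundary.

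Combining the two estimates gives
$$\Delta(\partial H_1,\partial H_2)=\frac{\dist(\partial H_1,\partial H_2)}{\min_i\diam\partial H_i}\ge\frac{A_2\,a^{-(d_1+d_2-D)/2}}{A_1\,a^{-\max(d_1,d_2)}}=\frac{A_2}{A_1}\,a^{\tfrac12|d_1-d_2|+D/2}\ge A\cdot a^{D/2},$$
with $A=A_2/A_1$, valid as soon as $D\ge D_0$; in the remaining range $D<D_0$ the desired inequality $\Delta\ge A\cdot a^{D/2}-B$ is trivially true once $B$ is chosen larger than $A\cdot a^{D_0/2}$. The main obstacle is the broken-quasigeodesic claim for $[u,p_1]\cup[p_1,p_2]\cup[p_2,v]$: this rests on the coarse orthogonality of hyperbolic nearest-point projection together with a rectangle-type argument for hyperbolic quadrilaterals, standard in the literature (cf.\! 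Bridson--Haefliger), but requiring some care to be tracked uniformly over $u\in H_1$ and $v\in H_2$.
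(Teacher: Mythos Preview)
Your overall strategy---translating the inequality into Gromov-product estimates---is sound, and your first estimate $\diam\partial H_i\lesssim a^{-d_i}$ is correct. However, your second estimate
$$(\xi\cdot\eta)_{x_0}\le\tfrac12(d_1+d_2-D)+K,\qquad \xi\in\partial H_1,\ \eta\in\partial H_2,$$
is false as stated, because the broken path $[u,p_1]\cup[p_1,p_2]\cup[p_2,v]$ is \emph{not} in general a uniform quasigeodesic: your $p_i$ is the nearest point of $H_i$ to the basepoint $x_0$, not to the other set $H_j$. In a tree, take $H_1$ a bi-infinite geodesic with $x_0$ projecting to $p_1\in H_1$, and attach the bridge to $H_2$ at a point $h_1\in H_1$ with $|p_1-h_1|=L$ arbitrarily large. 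Then for $u=h_1$ the path $[u,p_1]\cup[p_1,p_2]$ backtracks entirely along $[h_1,p_1]$, and the claimed inequality $|u-v|\ge|u-p_1|+D+|p_2-v|-O$ fails by $L$. In this example $d_2=d_1+L+D$, so $\tfrac12(d_1+d_2-D)=d_1+L/2$, whereas choosing $\xi$ on the far side of $h_1$ gives $(\xi\cdot\eta)_{x_0}=d_1+L$. Thus the lower bound $\dist(\partial H_1,\partial H_2)\gtrsim a^{-(d_1+d_2-D)/2}$ is genuinely wrong, not just unproven.

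The paper avoids this by choosing instead $h_i\in H_i$ that (approximately) realize $\dist(H_1,H_2)$, so that $[y_1,h_1]\cup[h_1,h_2]\cup[h_2,y_2]$ \emph{is} a controlled quasigeodesic for all $y_i\in H_i$, and then projecting the basepoint onto the quasiconvex union $H_1\cup[h_1,h_2]\cup H_2$. It splits into two cases according to whether the projection $p$ lands on the bridge $[h_1,h_2]$ or in one of the $H_i$. In the second case (which covers the tree example above) one does not bound $\dist(\partial H_1,\partial H_2)$ in terms of $d_1,d_2$ at all; instead one compares $\dist(\partial H_1,\partial H_2)\gtrsim a^{-|x_0-h_1|}$ directly with $\diam\partial H_2\lesssim a^{-|x_0-h_1|-D}$, obtaining the stronger bound $\Delta\gtrsim a^{D}$. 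Your argument can be repaired along these lines, but it seems one cannot dispense with the case split: there is no single uniform upper bound on $(\xi\cdot\eta)_{x_0}$ of the form you wrote that holds regardless of where $x_0$ sits relative to the bridge. (A minor aside: for the substitution into $(u\cdot v)_{x_0}$ you actually need the \emph{upper} bound $|x_0-u|\le d_1+|p_1-u|$, which is just the triangle inequality, rather than the lower bound you stated.)
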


\begin{proof}[Proof of the lemma]  
We denote the distance in $Y$ by $\vert y_1 - y_2 \vert$.
Pick $h_1 \in H_1, h_2 \in H_2$ with $\vert h_1 - h_2 \vert \le \dist(H_1,H_2)+1$. 
If $\dist(H_1,H_2)$
is large enough -- compared with $C$ and the hyperbolicity constant of $Y$ -- 
then for every $y_1 \in H_1$, $y_2 \in H_2$
the subset $[y_1, h_1] \cup [h_1, h_2] \cup [h_2, y_2]$ is a quasi-geodesic segment with controlled constants.
In particular the segment $[h_1, h_2]$ lies in a metric neighborhood  $N_r([y_1, y_2])$ 
where $r$ is controlled. 
Therefore $H_1 \cup [h_1, h_2] \cup H_2$ is a $C'$-quasiconvex subset, where $C'$ depends 
only on $C$ and the hyperbolicity constant of $Y$. 

Pick an origin  $y_0 \in Y$ and let $p$ be a nearest point projection of $y_0$ on 
$H_1 \cup [h_1, h_2] \cup H_2 $. We distinguish two cases.

\emph{Case 1 : $p \in [h_1, h_2]$}. Set $L = \max_{i=1,2} \dist (p, H_i)$; one has
with the relations (\ref{visual}) :
$$\Delta (\partial H_1, \partial H_2) \gtrsim \frac{a ^{-\vert y_0 - p \vert}}
{a ^{-\vert y_0 - p \vert -L}} = a ^L \ge a ^{\frac{1}{2} \dist(H_1,H_2)}.$$ 

\emph{Case 2 : $p \in H_1 \cup H_2$}. Suppose for example that $p$ belongs to $H_1$. 
Since every geodesic joining $H_1$ to $H_2$ passes close by $h_1$ one has 
$$\dist (\partial H_1, \partial H_2) \gtrsim a ^{-\vert y_0 - h_1 \vert}.$$
In addition every  geodesic joining $y_0$ to $H_2$ passes close by $p$ and thus close
by $h_1$. Therefore :
$$\diam (\partial H_2) \lesssim  a ^{-\vert y_0 - h_1 \vert - \dist(H_1,H_2)},$$
and so $\Delta (\partial H_1, \partial H_2) \gtrsim a ^{\dist(H_1,H_2)}$. 
\end{proof}

\begin{proof}[Proof of Corollary \ref{corqualitative}]  Let $\alpha \in (0,1)$ and let 
$d$ be a visual metric on $\partial Y$. 
  From the definition of the Ahlfors regular conformal dimension, and because
$\frac{\alpha + 1}{2 \alpha} >1$, 
there is an Ahlfors $Q$-regular metric  $\delta$ on $\partial Y$, with
$Q \le \frac{\alpha + 1}{2 \alpha}\Confdim (\partial Y)$, that is \emph{quasi-M\"obius equivalent} to $d$ i.e.\!
the identity map $(\partial Y, d) \to (\partial Y, \delta)$ is a quasi-M\"obius homeomorphism. 
This last property implies that the relative distances associated to $d$ and $\delta$
are quantitatively related (see \cite{BK1} Lemma 3.2).
Thus, thanks to Theorem \ref{thm-holderfunction} and Lemma \ref{lemmaqualitative}, 
there is a constant $D \ge 1$
such that if $W \subset Y$ satisfies the conditions
(1), (2), (3) of the statement, and if $\mathcal H$ denotes the family of its connected components, 
then,  for every pair of dictinct points $z_1, z_2 \in \partial Y$ with
$\{z_1, z_2\} \nsubseteq \partial H$ for every $H \in \mathcal H$, 
there is a function $u \in C^{\frac{\alpha+1}{2}} (\partial Y, \delta)$
with the following properties
\begin{itemize}
\item  $u \restr _{\partial H}$ is 
constant for every $H \in \mathcal H$.
\item $u(z_1) \neq u(z_2)$.
\end{itemize}   
We wish to extend $u$ to a continuous function $f : Y^{(0)} \cup \partial Y \to \mathbb R$
that is constant on every $H \in \mathcal H$.
To do so pick an origin $y_0 \in Y$ and observe that the nondegeneracy property of $Y$ yields 
the existence of a constant
$R \ge 0$ such that for every $y \in Y$ there is a $z \in \partial Y$ with 
$\dist (y, [y_0, z)) \le R$.

Let $y \in Y^{(0)}$. If $y$ belongs to a $H \in \mathcal H$, set $f (y) = u(\partial H)$.
If not, pick a $z \in \partial Y$ such that $\dist (y, [y_0, z)) \le R$
and define $f (y) = u(z)$. 

We claim that $f$ is a continuous function of 
$Y^{(0)} \cup \partial Y$. Let $a>1$ be the exponential parameter of the visual metric $d$.
Our hypothesis (2) implies that there is a constant 
$R' \ge 0$ so that for every $y \in Y^{(0)}$ there is a  $z \in \partial Y$
with $\dist (y, [y_0, z)) \le R'$  and 
$f(y) = u(z)$. Let $y_1, y_2 \in Y^{(0)}$ with $y_1 \neq y_2$. By relation (\ref{visualbis}),
their mutual distance in $Y^{(0)} \cup \partial Y$
is comparable to $a^{-L}$, where $L = \dist(y_0, [y_1,y_2])$. The metrics $d$ and $\delta$ being
quasi-M\"obius equivalent they are H\"older equivalent (see \cite{He} Cor.\! 11.5).  
Thus $u \in C ^\beta (\partial Y, d)$ for
some $\beta >0$ ; and we obtain : 
\begin{equation}\label{equaqualitative} 
\vert f (y_1) - f (y_2) \vert = \vert u(z_1) - u(z_2) \vert
\lesssim a ^{-\beta \dist(y_0, (z_1,z_2))} \lesssim a ^{-\beta L}.
\end{equation}
To establish the last inequality one notices that $\dist(y_0, (z_1,z_2)) - L$
is bounded by below just in terms of $R'$ and the hyperbolicity constant of $Y$.
The claim follows.

We now claim that $df \in \ell_p(Y^{(1)})$ for $p>\frac{2Q}{\alpha +1}$. To see it,
observe that $u$ is a Lipschitz function of $(\partial Y, \delta ^{\frac{\alpha +1}{2}})$
and $\delta ^{\frac{\alpha +1}{2}}$ is an Ahlfors $\frac{2Q}{\alpha +1}$-regular metric
on $\partial Y$ which is quasi-M\"obius equivalent to a visual metric. Therefore
the claim follows from Elek's extension process \cite{E} (see \cite[Prop.\! 3.2]{B4}  for
more details).

Hence the function $f$ defines an element of $\ell_p H^1 _{\textrm {cont}} (Y,W)$ for
$p >\frac{2Q}{\alpha +1}$, which separates $z_1$ and $z_2$. Thus 
$$p _{\textrm{sep}} (Y,W) \le \frac{2Q}{\alpha +1} \le \frac{1}{\alpha} \Confdim (\partial Y).$$
 \end{proof}

\section{Applications to amalgamated products}\label{amalgam}

This section uses Section \ref{cohoboundary} and Corollary \ref{corqualitative} to
construct 
examples of group amalgams whose $\ell_p$-cohomology
has specified behavior (see Corollary \ref{cor2amalgam} especially).

 Let $A$ be a hyperbolic group and let $C \leqq A$ be a finitely presentable subgroup.
Let $K_A$ and $K_C$ be finite $2$-complexes with
respective fundamental groups $A$ and $C$, such that there is a  simplicial
embedding $K_A\hookrightarrow K_C$ inducing the
given embedding of fundamental groups. Denote by $\tilde K_A$ the universal cover
of $K_A$ and pick a copy $\tilde K_C \subset \tilde K_A$ of the universal cover 
of $K_C$. 
The spaces and invariants $\ell_p H^1_{\textrm{cont}}(Y,W)$,
$p_{\neq 0}(Y)$, $p_{\textrm{sep}}(Y)$, $p_{\neq 0}(Y, W)$, $p_{\textrm{sep}}(Y, W)$, associated to the
pair 
$$(Y, W) = (\tilde K_A, \cup _{a \in A} a \tilde K_C),$$
will be denoted simply 
by 
$\ell_p H^1_{\textrm{cont}}(A, C)$,
$p_{\neq 0}(A)$, $p_{\textrm{sep}}(A)$, $p_{\neq 0}(A, C)$ and $p_{\textrm{sep}}(A, C)$.

We notice that $p_{\textrm{sep}}(A,C)$ is finite when $C$ is 
a quasi-convex malnormal subgroup 
(see Remark 2 after Theorem \ref{thm-holderfunction} and \cite{B2} for more details). 

Recall from \cite{KapI} that, given $A, B$ two hyperbolic groups and 
$C$ a quasi-convex malnormal subgroup of $A$ and $B$, the amalgamated product $A *_C B$
is hyperbolic.

\begin{proposition}\label{propamalgam}
Let $A, B, C$ three hyperbolic groups, suppose that $A$ and $B$ are non elementary
and that $C$ is a proper quasi-convex malnormal subgroup of $A$ and $B$.
Let $\Gamma$ be the amalgamated product $A *_C B$.
We have\,:
\begin{itemize}
\item [(1)] If $p_{\neq 0}(A,C) < p_{\textrm{sep}} (B)$ then for all $p \in (p_{\neq 0}(A,C), p_{\textrm{sep}} (B)]$
the $\ell _p$-equivalence relation on $\partial \Gamma$ possesses a coset different from a point
and the whole boundary $\partial \Gamma$.
\item [(2)] If $p_{\textrm{sep}}(A,C) < p_{\neq 0}(B)$ then for all $p \in (p_{\textrm{sep}}(A,C), p_{\neq 0}(B))$
the $\ell _p$-equivalence relation on $\partial \Gamma$ is of the following form : 
$$ z_1 \sim_p z_2 \Longleftrightarrow z_1 = z_2 ~~\textrm{or}~~\exists g \in \Gamma ~~\textrm{such that}~~
\{z_1,z_2\} \subset g (\partial B).$$
\end{itemize}
\end{proposition}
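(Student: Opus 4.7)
My approach rests on the amalgamated-product setup of Example \ref{excut} combined with Proposition \ref{propcut}. Let $\tilde K$ be the universal cover of $K = K_A \cup_{K_C} K_B$, organized along the Bass--Serre tree $T$ of $\Gamma = A \star_C B$ into copies $g\tilde K_A$, $g\tilde K_B$ ($g \in \Gamma$) glued along copies of $\tilde K_C$. For each $g \in \Gamma$, the copy $Y_g := g\tilde K_A$ decomposes $\tilde K$ in the sense of Definition \ref{defcut}; its frontier $W_g$ is a disjoint union of copies of $\tilde K_C$, the pair $(Y_g, W_g)$ is equivariantly identified with $(\tilde K_A, \bigcup_{a \in A} a\tilde K_C)$, and Proposition \ref{propcut} yields
$$
\ell_p H^1_{\cont}(\tilde K,\overline{\tilde K \setminus Y_g}) \simeq \ell_p H^1_{\cont}(A,C), \qquad p_{\sep}(\tilde K,\overline{\tilde K \setminus Y_g}) = p_{\sep}(A,C),
$$
together with the analogous equality for $p_{\neq 0}$.

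For part (1), the hypothesis $p > p_{\neq 0}(A,C)$ combined with Corollary \ref{corcut1}(1) gives $\ell_p H^1_{\cont}(\Gamma) \neq 0$, so $\sim_p$ has more than one coset and in particular no coset equals $\partial \Gamma$. Fixing any copy $Z = \tilde K_B \subset \tilde K$, the restriction of cochains from $\tilde K^{(0)}$ to $Z^{(0)}$ induces a map $\ell_p H^1_{\cont}(\Gamma) \to \ell_p H^1_{\cont}(B)$ which sends each boundary extension to an element of $A_p(\partial B)$. Since $p \le p_{\sep}(B)$, the space $A_p(\partial B)$ fails to separate some pair $z_1 \neq z_2 \in \partial B$ (the borderline $p = p_{\sep}(B)$ is handled by extracting limits of non-separated pairs along a sequence $p_n \nearrow p_{\sep}(B)$, using doubling of $\partial B$). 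These points satisfy $z_1 \sim_p^\Gamma z_2$, so their common coset has more than one element but is strictly smaller than $\partial \Gamma$.

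For part (2), the easy direction is immediate: if $z_1, z_2 \in g\partial B$, then $p < p_{\neq 0}(B)$ forces $\ell_p H^1_{\cont}(B) = 0$, so every class of $\ell_p H^1_{\cont}(\Gamma)$ restricts trivially to $g\tilde K_B$, and nondegeneracy of $\tilde K_B$ makes the boundary extension constant on $g\partial B$; hence $z_1 \sim_p z_2$. For the converse, given distinct $z_1, z_2 \in \partial \Gamma$ not both contained in any single $g\partial B$, I would establish the geometric claim that some copy $Y = g_0 \tilde K_A$ satisfies $\{z_1, z_2\} \nsubseteq \partial E$ for every connected component $E$ of $\overline{\tilde K \setminus Y}$. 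Granting the claim, the equality $p_{\sep}(\tilde K, \overline{\tilde K \setminus Y}) = p_{\sep}(A,C) < p$ together with the canonical injection $\ell_p H^1_{\cont}(\tilde K, \overline{\tilde K \setminus Y}) \hookrightarrow \ell_p H^1_{\cont}(\tilde K)$ produces a class separating $z_1$ from $z_2$, yielding $z_1 \not\sim_p z_2$.

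The main obstacle is the geometric claim just stated. I would prove it by assigning to each $z \in \partial \Gamma$ the vertex set $V(z) := \{v \in T^0 \mid z \in \partial K_v\}$, where $K_v$ denotes the corresponding vertex space of $\tilde K$. Quasi-convexity and the malnormality of $C$ in $A$ and in $B$ constrain $V(z)$ to be either empty (for $z$ the limit of a ray in $T$), a singleton (for $z$ a generic boundary point of one vertex space), or a pair of adjacent vertices (for $z$ in the limit set $g\partial C$ of an edge space). The hypothesis that $z_1, z_2$ are not both contained in some $g\partial B$ prevents $V(z_1) \cup V(z_2)$ from lying in the closed star of any single $B$-vertex of $T$; a short tree-combinatorial analysis then yields an $A$-vertex $g_0 A$ for which $Y = g_0 \tilde K_A$ has the required separating property, either because $z_1$ and $z_2$ lie in distinct components of $\overline{\tilde K \setminus Y}$, or because one of them lies in $\partial Y$ avoiding every edge-space limit set appearing as a frontier component.
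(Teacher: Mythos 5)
Your overall strategy matches the paper's: decompose $\tilde K$ along copies of $\tilde K_A$, translate to relative cohomology of $(\tilde K_A, \bigcup_a a\tilde K_C)$ via Proposition~\ref{propcut}, and play the two thresholds $p_{\sep}(A,C)$ and $p_{\neq 0}(B)$ off against each other. Part (2) is handled the same way the paper does, and in fact you supply a step the published proof only gestures at: the paper explicitly treats only the pair $z_1\in g\partial B$, $z_2\in g'\partial B$, leaving the reader to patch in the case of points not lying in any $g\partial B$; your geometric claim about choosing $g_0\tilde K_A$ so that $\{z_1,z_2\}\nsubseteq\partial E$, verified through the Bass--Serre tree combinatorics of $V(z)$, fills exactly that gap.

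There is, however, a genuine flaw in your Part (1) at the endpoint $p=p_{\sep}(B)$. For $p_n\le p$ one has $\ell_{p_n}\subset\ell_p$ on a countable index set, hence $A_{p_n}(\partial B)\subset A_p(\partial B)$ and $\sim_p\,\subseteq\,\sim_{p_n}$; the relation gets \emph{finer} as $p$ increases. So if $z_1^n\sim_{p_n}z_2^n$ and $(z_1^n,z_2^n)\to(z_1,z_2)$, nothing forces $z_1\sim_{p_{\sep}(B)}z_2$; the $\sim_{p_n}$-cosets could simply be collapsing to singletons, and even if $z_1\neq z_2$ in the limit you have no control on the $\sim_{p_{\sep}(B)}$-class. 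The correct way to handle the closed endpoint is to invoke Theorem~\ref{Confdim}(1): since $\partial\Gamma$ is approximately self-similar, $p\le p_{\sep}(\Gamma)=\Confdim(\partial\Gamma)$ forces some $\sim_p$-coset of $\partial\Gamma$ to be non-degenerate. This is exactly what the paper does, using the inequality $p_{\sep}(B)\le p_{\sep}(\Gamma)$, which follows from the same restriction map you write down. Routing through $\partial B$ is also possible, but then one must apply Theorem~\ref{Confdim}(1) to $\partial B$ (connectedness of $\partial B$ then becomes a hypothesis), rather than rely on a compactness/limits argument.
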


\begin{proof}[Proof of Proposition \ref{propamalgam}. ]
Part (2) is established in \cite{B2} Th.\! 0.1.
We provide here a more enlightening proof.

(1). Let $p \in (p_{\neq 0}(A,C),p_{\textrm{sep}} (B)]$ be as in the statement. From Example \ref{excut} and
Corollary \ref{corcut1}(1) we have $p>p_{\neq 0}(\Gamma)$. 
Hence, by Proposition \ref{critique-p}, the $\ell_p$-cosets are different from $\partial \Gamma$.
On the other hand we have the obvious inequality  $p \le p_{\textrm{sep}} (B) \le p_{\textrm{sep}} (\Gamma)$. 
Thus Theorem \ref{Confdim}(1)
shows that $\partial \Gamma$ admits a $\ell_p$-coset which is different from a singleton.

(2). Let $p \in (p_{\textrm{sep}}(A,C), p_{\neq 0}(B))$ be as in the statement. 
Let $u \in A_p (\partial \Gamma)$. 
Its restriction to every $g( \partial B)$ (with $g \in \Gamma)$ is constant since $p<p_{\neq 0}(B)$.
Thus every $g(\partial B)$ is contained in a $\ell_p$-coset.

Conversely to establish that 
$$ z_1 \sim_p z_2 \Longrightarrow z_1 = z_2 ~~\textrm{or}~~\exists g \in \Gamma ~~\textrm{such that}~~
\{z_1,z_2\} \subset g (\partial B),$$
we shall see that $A_p  (\partial \Gamma)$ separates the limit sets $g(\partial B)$.  
Consider finite simplicial complexes
$K_A, K_B, K_C, K$, with respective fundamental group $A, B, C, \Gamma$, as described in Example \ref{excut}.
Let $\tilde K$ be the universal cover of $K$ and choose copies $\tilde K_A , \tilde K_B \subset \tilde K$
of the universal covers of $K_A, K_B$.
We know from  Example \ref{excut} that $\tilde K_A$ decomposes $\tilde K$.

Pick two distinct subcomplexes $g \tilde K_B, g'\tilde K_B \subset \tilde K$ and a geodesic $\gamma \subset \tilde K$ 
joining them.
It passes through a subcomplex $g''\tilde K_A$.  Applying $(g'') ^{-1}$ if necessary, we may assume that $g'' =1$.
Therefore $g \tilde K_B$ and $g'\tilde K_B$ lie in different components of 
$\overline{\tilde K \setminus \tilde K_A}$. 
By Proposition \ref{propcut}(2), we have   $p_{\textrm{sep}}(A,C) = 
p_{\textrm{sep}} (\tilde K, \overline{\tilde K \setminus \tilde K_A})$. Since $p > p_{\textrm{sep}}(A,C)$,
we obtain that $A_p (\partial \Gamma)$ separates 
$g (\partial B)$ from $g' (\partial B)$.
\end{proof}

 From Theorem \ref{Confdim}(2), and since $\sim_p$ is invariant under the boundary extensions
of the quasi-isometries of $\Gamma$, we get :

\begin{corollary}\label{coramalgam}
Let $A, B, C, \Gamma$ be as in Proposition \ref{propamalgam}. 
\begin{itemize}
\item [(1)] If $p_{\neq 0}(A,C) < p_{\textrm{sep}} (B)$ then $\partial \Gamma$ does not admit the CLP.
\item [(2)] If $p_{\textrm{sep}}(A,C) < p_{\neq 0}(B)$ then any quasi-isometry of $\Gamma$ permutes the cosets $gB$,
for $g\in \Gamma$. More precisely, the image of a coset $gB$ by a quasi-isometry of $\Gamma$
lies within bounded distance (quantitatively) from a unique coset $g'B$.
\end{itemize}
\end{corollary}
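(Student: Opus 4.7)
Both parts will follow quickly by combining Proposition~\ref{propamalgam} with Theorem~\ref{Confdim}(2) and with the quasi-isometry invariance of the $\ell_p$-equivalence relation.

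For (1), I would argue by contradiction. If $\partial \Gamma$ satisfies the CLP, then $\partial \Gamma$ is arcwise connected (this is built into the definition of the CLP, which involves joining curve families), and it is approximately self-similar since $\Gamma$ acts isometrically, properly discontinuously and cocompactly on a bounded geometry hyperbolic space (see Section~\ref{preliminaries}); the space is also nondegenerate since $\Gamma$ is non-elementary. Thus the hypotheses of Theorem~\ref{Confdim} are satisfied. Part (2) of that theorem, combined with Proposition~\ref{critique-p}, implies that for every $p\in [1,\infty)$ the cosets of $\sim_p$ are either all of $\partial \Gamma$ (when $p\le \Confdim(\partial \Gamma)$) or reduced to points (when $p>\Confdim(\partial \Gamma)$, via part (1)). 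However Proposition~\ref{propamalgam}(1) furnishes a $p\in(p_{\neq 0}(A,C),p_{\textrm{sep}}(B)]$ for which $\sim_p$ has a coset that is strictly between a point and $\partial \Gamma$, giving the desired contradiction.

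For (2), fix $p\in(p_{\textrm{sep}}(A,C),p_{\neq 0}(B))$. Proposition~\ref{propamalgam}(2) identifies the non-singleton cosets of $\sim_p$ with exactly the limit sets $g(\partial B)$, $g\in \Gamma$ (they really are non-singletons since $B$ is non-elementary). Because $\sim_p$ is preserved by the boundary extension $\phi_\infty$ of any self-quasi-isometry $\phi$ of $\Gamma$, the map $\phi_\infty$ must permute the family $\{g(\partial B)\}_{g\in \Gamma}$ on $\partial \Gamma$.

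To upgrade this to a bounded-distance statement in $\Gamma$, I would invoke standard Gromov-hyperbolic geometry together with malnormality. Each coset $gB$ is a quasi-convex subset of $\Gamma$ with limit set $g(\partial B)$, and the malnormality of $C$ in both $A$ and $B$ implies that $B$ is malnormal in $\Gamma=A\star_C B$; in particular the limit sets $g(\partial B)$ and $g'(\partial B)$ are disjoint whenever $gB\ne g'B$. Hence, given a coset $gB$, there is a \emph{unique} coset $g'B$ with $\phi_\infty(g(\partial B))=g'(\partial B)$. Both $\phi(gB)$ and $g'B$ are then quasi-convex subsets of $\Gamma$ with identical limit sets and with quasi-convexity constants depending only on $B$ and on the quasi-isometry constants of $\phi$, and two such subsets of a Gromov hyperbolic space lie within uniformly bounded Hausdorff distance of each other. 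I expect the mild subtlety here to be verifying that the bound is uniform (i.e. quantitative), but this is a standard consequence of hyperbolicity plus quasi-convexity, so no real obstacle arises.
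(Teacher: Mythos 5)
Your proposal follows the same route as the paper: the paper's entire proof is the one-line observation that the corollary follows from Theorem~\ref{Confdim}(2) together with invariance of $\sim_p$ under boundary extensions of quasi-isometries, and you have simply unpacked that observation. One small caveat in part~(2): your claim that $B$ is malnormal in $\Gamma$ is not obviously true (with torsion the intersection $gBg^{-1}\cap B$ for $g\notin B$ need only be finite, not trivial), but you do not actually need it --- the disjointness of the limit sets $g(\partial B)$ for distinct cosets $gB$ is already built into Proposition~\ref{propamalgam}(2), since these are the non-singleton equivalence classes of $\sim_p$ and distinct classes of an equivalence relation are disjoint. With that adjustment the argument is clean and matches the intended proof.
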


In combination with Corollary \ref{corqualitative} this leads to: 
  
\begin{corollary}\label{cor2amalgam}
Let $A, B, C$ be as in Proposition \ref{propamalgam} and suppose that there is a decreasing sequence 
$\{A_n\}_{n \in \mathbb N}$ of finite index subgroups of $A$ such that $\cap _{n \in \mathbb N} A_n = C$.
Set $\Gamma \!_n := A_n * _C B$.
\begin{itemize}
\item [(1)] If $p_{\textrm{sep}} (A) < p_{\textrm{sep}} (B)$ then, for all $p \in (p_{\textrm{sep}} (A), p_{\textrm{sep}} (B)]$
and every $n$ large enough, the $\ell _p$-equivalence relation on $\partial \Gamma \!_n$ 
possesses a coset different from a point
and the whole $\partial \Gamma$. In particular for $n$ large enough, 
$\partial \Gamma \!_n$ does not admit the CLP.
\item [(2)] If $p_{\textrm{sep}} (A) < p_{\neq 0}(B)$ then, for $p \in (p_{\textrm{sep}}(A), p_{\neq 0}(B))$
and every $n$ large enough, the cosets of the $\ell_p$-equivalence relation on $\partial \Gamma \!_n$
are single points and the boundaries of cosets $gB$, for $g \in \Gamma \!_n$.
In particular, for large $n$, any quasi-isometry of $\Gamma \!_n$ permutes the cosets $gB$, for
$g\in \Gamma \!_n$.
\end{itemize}
\end{corollary}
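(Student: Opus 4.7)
The plan is to derive Corollary \ref{cor2amalgam} from Corollary \ref{coramalgam}, after showing that the relative invariant $p_{\textrm{sep}}(A_n, C)$ approaches $p_{\textrm{sep}}(A)$ from above as $n \to \infty$. Since $p_{\neq 0}(A_n, C) \le p_{\textrm{sep}}(A_n, C)$, both inequalities $p_{\neq 0}(A_n, C) < p_{\textrm{sep}}(B)$ in (1) and $p_{\textrm{sep}}(A_n, C) < p_{\neq 0}(B)$ in (2) will hold for large $n$ once we establish $\limsup_{n} p_{\textrm{sep}}(A_n, C) \le p_{\textrm{sep}}(A)$; the two conclusions then follow immediately from Corollary \ref{coramalgam}(1) and (2) respectively. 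We may harmlessly assume $C$ is non-elementary, since if $C$ is finite then the decreasing chain with $\bigcap_n A_n = C$ forces $A_n = C$ for $n$ large, trivializing $\Gamma \!_n = B$.

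To bound $p_{\textrm{sep}}(A_n, C)$, I apply Corollary \ref{corqualitative} to the pair
$$(Y, W_n) := \Bigl(\tilde K_A, \bigcup_{a \in A_n} a \tilde K_C\Bigr),$$
whose associated relative separation exponent is by definition $p_{\textrm{sep}}(A_n, C)$. Hypotheses (1) and (2) of Corollary \ref{corqualitative} hold with constants independent of $n$: (1) because $C$ is quasi-convex in $A$, and (2) because $C$ acts cocompactly on $\tilde K_C$ and, being non-elementary hyperbolic, $\tilde K_C$ is covered within uniform distance by its bi-infinite geodesics.

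The heart of the proof is the separation condition (3): given $R > 0$, distinct components of $W_n$ must be pairwise at distance greater than $R$ for $n$ large. By $A_n$-equivariance this reduces to the claim that $\dist(\tilde K_C, a \tilde K_C) > R$ for every $a \in A_n \setminus C$. Cocompactness of $C \acts \tilde K_C$ and properness of $A \acts \tilde K_A$ imply that $S_R := \{a \in A : \dist(\tilde K_C, a \tilde K_C) \le R\}$ meets only finitely many $C$-$C$ double cosets, say $C a_0 C, C a_1 C, \ldots, C a_N C$ with $a_0 = 1$. For each $i \ge 1$ the representative $a_i$ lies in $A \setminus C$, and since $C \subset A_n$, an element of $C a_i C$ belongs to $A_n$ if and only if $a_i$ does. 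The hypothesis $\bigcap_n A_n = C$ combined with the decreasing chain property yields $a_i \notin A_n$ for $n$ sufficiently large (depending on $i$); taking the maximum over the finitely many $i \ge 1$ gives $A_n \cap S_R \subset C$ for large $n$, establishing (3).

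With the three hypotheses in place, Corollary \ref{corqualitative} gives $p_{\textrm{sep}}(A_n, C) \le \frac{1}{\alpha} \Confdim(\partial A)$ for every $\alpha \in (0,1)$ and all $n \ge n(\alpha)$. Since $\partial A$ is approximately self-similar as the boundary of a hyperbolic group, the identification $\Confdim(\partial A) = p_{\textrm{sep}}(A)$ of Theorem \ref{Confdim}(1) applies (in the natural connected case), and letting $\alpha \to 1^-$ yields $\limsup_n p_{\textrm{sep}}(A_n, C) \le p_{\textrm{sep}}(A)$, completing the reduction. The main obstacle is the verification of (3): this is precisely the point where the algebraic data ($\bigcap_n A_n = C$, together with malnormality, which ensures that $\Gamma \!_n$ is hyperbolic and that the analytic invariants are defined) are converted into the quantitative geometric separation consumed by the H\"older construction of Section \ref{qualitative}.
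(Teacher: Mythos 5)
Your proof is essentially the paper's: both reduce the corollary to the inequality $\limsup_n p_{\textrm{sep}}(A_n,C)\le p_{\textrm{sep}}(A)$, take $(Y,W_n)=(\tilde K_A,\bigcup_{a\in A_n}a\tilde K_C)$, and apply Corollary \ref{corqualitative} together with Theorem \ref{Confdim}(1). For the key separation hypothesis (3) you argue via finiteness of the set of $C\hbox{-}C$ double cosets meeting a bounded neighbourhood; the paper packages the same idea more compactly through the identity $\inf\{\dist(C,aC)\mid aC\in A_n/C,\,aC\neq C\}=\dist(1,A_n\setminus C)$ (valid because $CA_nC=A_n$) and then uses properness and $\bigcap_n A_n=C$. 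The two are equivalent, and you get extra credit for explicitly checking hypotheses (1) and (2) of Corollary \ref{corqualitative}, which the paper leaves implicit.

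One side-remark is wrong, though you don't use it: you claim that if $C$ is finite the chain $\bigcap_n A_n=C$ forces $A_n=C$ for large $n$. That is false --- each $A_n$ has finite index in the infinite group $A$ and is therefore infinite, yet a decreasing chain of infinite finite-index subgroups can have finite or even trivial intersection (e.g.\ $2^n\mathbb Z$ in $\mathbb Z$). The degenerate case $C$ finite is genuinely problematic for hypothesis (2) of Corollary \ref{corqualitative} (components of $W_n$ are then bounded, so contain no complete geodesics), and the paper itself tacitly assumes $C$ infinite here; your attempted dismissal of that case does not work, but this does not affect the substance of the argument in the case the paper actually addresses.
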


\begin{proof}  According to Proposition \ref{propamalgam} and Corollary \ref{coramalgam} it is enough to prove that 
\begin{equation}\label{equamalgam}
\mathrm{limsup}_n~ p_{\textrm{sep}}(A_n, C) \le p_{\textrm{sep}} (A).
\end{equation}
We consider the simplicial complexes $\tilde K_A, \tilde K_C$ introduced in the beginning of the section,
and we set for $n \in \mathbb N$ :
$$(Y, W_n) := (\tilde K_A, \cup _{a \in A_n} a \tilde K_C).$$
Then just by definition we have\,: $p_{\textrm{sep}}(A_n, C) = p_{\textrm{sep}}(Y,W_n)$. 
On the other hand, 
since $CA_n C= A_n$, one has in the group $A$\;:
$$\inf\{\dist(C, aC) ~\vert~ aC \in A_n /C, ~ aC \neq C\} = \dist(1, A_n \setminus C).$$ 
The group $A$
is quasi-isometric to the space $\tilde K_A$. Therefore the last equality, in combination with
the hypothesis $\cap _{n \in \mathbb N} A_n = C$, implies that the minimal pairwise separation between
distinct components of $W_n$ tends infinity as $n \to \infty$.  
It follows from Corollary \ref{corqualitative} that 
$$\mathrm{limsup}_n ~p_{\textrm{sep}}(Y, W_n) \le \Confdim (\partial A).$$
With Theorem \ref{Confdim}(1) we get the desired inequality (\ref{equamalgam}). 
\end{proof}

\medskip

\begin{example}
\label{ex-marios_question}
 It is now possible to answer M. Bonk's question about examples of approximately 
self-similar Sierpinski carpets without the CLP.
 Pick two hyperbolic groups $A$ and $B$ whose boundaries are homeomorphic to the 
Sierpinski carpet, and which admit an isomorphic peripheral subgroup $C$. Assume
that $\Confdim (\partial A) < \Confdim (\partial B)$ and that there is a sequence  
$\{A_n\}_{n \in \mathbb N}$ of finite index subgroups of $A$ such that $\cap _{n \in \mathbb N} A_n = C$.
Such examples can be found among hyperbolic Coxeter groups, because quasi-convex subgroups of
Coxeter groups are separable. 
See Example \ref{extetrahedron}
and \cite{HW}.
Then for $n$ large enough the boundary of $A_n * _C B$ is homeomorphic to the Sierpinski
carpet. Moreover, according to Corollary \ref{cor2amalgam}, it doesn't admit the CLP for $n$
large enough.
\end{example} 

\medskip

\begin{example}
Let $M$, $M'$ and $N$ be closed hyperbolic (\emph{i.e.} \!constant curvature $-1$)
manifolds with $1 \le \dim (N) < \dim (M) < \dim (M')$. Suppose that $M$ and $M'$ contain
as a submanifold an isometric totally geodesic copy of $N$. 
The group $\pi _1 (N)$ is separable in $\pi _1 (M)$ (see \cite{Ber}). 
For the standard hyperbolic space $\mathbb H ^k$ of dimension $k \ge 2$, one has by \cite{Pa1} :
$\Confdim (\partial \mathbb H ^k) = p_{\neq 0} (\mathbb H ^k) = k -1$. 
Therefore the assumptions
in items (1) and (2) of Corollary \ref{cor2amalgam} are satisfied with
$A = \pi_1(M)$, $B=\pi_1(M')$ and $C=\pi_1(N)$. It follows that the 
manifold $M$ admits a finite cover $M_n$ containing an isometric totally geodesic copy of $N$,
such that the space $K := M_n \sqcup _N M'$ possesses the following properties : 
\begin{itemize}
\item For $p \in (\dim M -1, \dim M' -1)$, by letting $\tilde K$ be the universal cover of $K$, 
the cosets of the $\ell_p$-equivalence relation 
on $\partial \tilde K$
are points and the boundaries of lifts of $M'$.
In particular $\partial \tilde K$ doesn't satisfy
the CLP.
\item Every quasi-isometry of $\tilde K$ permutes the lifts of $M' \subset K$.
\end{itemize}
The second property may also be proven using the topology of the boundary, or using coarse topology.
By combining topological arguments with the zooming method of R. Schwartz \cite{Sch}, 
one can deduce from the second property
that every quasi-isometry of $\tilde K$ lies within bounded distance from an isometry.
\end{example}

\section{Elementary polygonal complexes}\label{elementary}

In this section we compute the invariants $p_{\neq 0}(Y,W)$ and $p_{\textrm{sep}}(Y,W)$
in some very special cases. They will serve in the next sections to obtain  upper bounds
for the conformal dimension and for the invariant $p_{\neq 0}(X)$.

\begin{definition} A \emph{polygonal complex} is a connected simply connected
$2$-cell complex $X$ of the following form :
\begin{itemize}
\item Every $2$-cell is isomorphic to a polygon with  at least $3$ sides.
\item Every pair of $2$-cells shares at most a vertex or an edge.
\end{itemize} 
The number of sides of a $2$-cell is called its \emph{perimeter},
the number of $2$-cells containing an edge is called its \emph{thickness}. 
\end{definition}

\begin{definition} An \emph{elementary polygonal complex} is a polygonal
complex $Y$ whose edges are colored black or white,  that enjoys the following properties :
\begin{itemize}
\item Every $2$-cell has even perimeter 
at least $6$.
\item The edges on the boundary 
of every $2$-cell are alternately black and white. 
\item Every white edge has thickness $1$, and every black
edge has thickness at least $2$.
\end{itemize}
\noindent
We will  equip every elementary polygonal complex $Y$ 
with a length metric of negative curvature, by
identifying every $2$-cell with a constant negative curvature right angled
regular polygon of unit length edges; in particular $Y$ is a Gromov
hyperbolic metric space (quasi-isometric to a tree). 
The union of the white edges of $Y$  is called its \emph{frontier}.  The frontier is a locally
convex subcomplex of $Y$, and hence every connected component
is a $CAT(0)$ space, i.e.\! subtree of $Y$.   We call such components   \emph{frontier trees} of $Y$.

The (unique) elementary polygonal complex, whose $2$-cells have constant perimeter $2m$
and whose black edges have constant thickness $k$ ($m \ge 3, k \ge 2$), will be 
denoted by $Y_{m,k}$.
\end{definition}

We denote by $\mathcal T$ the 
frontier of $Y$, and we consider the associated invariants :
$p _{\neq 0}(Y, \mathcal T), p_{\textrm{sep}}(Y, \mathcal T)$. When $Y = Y_{m,k}$ we 
write $\mathcal T _{m,k}$, $p_{m,k}$, $q_{m,k}$ for brevity.
 
\begin{theorem}\label{theopq}
For $m\ge 3$ and $k \ge 2$ one has :
$$p_{m,k} =  q_{m,k} = 1 + \frac{\log (k-1)}{\log (m-1)}.$$
\end{theorem}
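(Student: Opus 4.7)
Let $p_{*}=1+\frac{\log(k-1)}{\log(m-1)}$. My plan is to prove the two bounds $q_{m,k}\le p_{*}$ and $p_{m,k}\ge p_{*}$ separately, since the remaining inequality $p_{m,k}\le q_{m,k}$ is automatic from non-degeneracy (as noted at the end of Section~\ref{relative}). The starting point is a structural reduction. Because the edges around every $2$-cell alternate colors, each vertex of $Y_{m,k}$ is incident to a white edge, so $Y_{m,k}^{(0)}\subseteq\mathcal{T}_{m,k}$. Consequently any function $f$ on $Y_{m,k}^{(0)}$ that is constant on every frontier tree descends to a function $\bar f$ on the set $\mathcal{F}$ of frontier trees, and since $df$ vanishes on every white edge,
\[
\|df\|_{\ell_p}^{p}=\sum_{e\text{ black}}\bigl|\bar f(T_{+}(e))-\bar f(T_{-}(e))\bigr|^{p},
\]
where $T_{\pm}(e)$ are the frontier trees containing the two endpoints of $e$. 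This identifies $\ell_{p}H^{1}(Y_{m,k},\mathcal{T}_{m,k})$ canonically with the $\ell_{p}$-cohomology of the graph $G_{m,k}$ whose vertex set is $\mathcal{F}$ and whose edges are in bijection with the black edges of $Y_{m,k}$. Regularity of $Y_{m,k}$ makes $G_{m,k}$ a vertex-transitive hyperbolic graph with controlled branching: crossing a black edge into a new polygon, one finds $m-1$ further black exits from that polygon, each of which is shared with $k-1$ other polygons. The exponent $p_{*}$ is precisely the one at which the resulting branching series becomes critical.

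For the upper bound $q_{m,k}\le p_{*}$, fix $p>p_{*}$ and a pair $z_1,z_2\in\partial Y_{m,k}$ not both in the boundary of a single frontier tree; they determine distinct ends $\xi_1,\xi_2$ of $G_{m,k}$. I would construct an explicit $\bar f:\mathcal{F}\to\mathbb{R}$ that decays geometrically with rate $\rho$ along the branching of $G_{m,k}$, chosen so that the energy sum converges, $\rho^{p}(m-1)(k-1)<1$, while simultaneously $\rho(m-1)\ge 1$, so that $\bar f$ still has distinct limits at $\xi_1$ and $\xi_2$. These two constraints are jointly satisfiable exactly when $p>p_{*}$, and continuity of the boundary extension and separation of $\{z_1,z_2\}$ then follow from convergent geometric-series estimates, yielding the desired separating class in $\ell_{p}H^{1}_{\mathrm{cont}}(Y_{m,k},\mathcal{T}_{m,k})$.

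For the lower bound $p_{m,k}\ge p_{*}$, I would show that for $p<p_{*}$ every class in $\ell_{p}H^{1}_{\mathrm{cont}}(Y_{m,k},\mathcal{T}_{m,k})$ is trivial. Via the reduction above, this reduces to verifying that the combinatorial $p$-capacity of every end of $G_{m,k}$ vanishes, so that any function of finite $p$-energy has constant boundary values, and then invoking non-degeneracy. The key ingredient is a recursive estimate of the combinatorial $p$-modulus (equivalently, $p$-capacity) from a basepoint to infinity in $G_{m,k}$: its self-similar $(m-1)(k-1)$-branching balanced against a forward conductance factor $(m-1)^{-(p-1)}$ yields a capacity that vanishes precisely when $(k-1)(m-1)^{-(p-1)}\ge 1$, i.e.\ $p\le p_{*}$. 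The main obstacle is to convert this heuristic into a rigorous Poincar\'e-type inequality on $G_{m,k}$ at the critical exponent; I anticipate this follows from an exact self-similar recursion enabled by the cocompact, vertex- and polygon-transitive isometry group of $Y_{m,k}$, in the spirit of Bourdon's conformal dimension computation for Fuchsian buildings.
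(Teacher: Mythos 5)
Your reduction from the pair $(Y_{m,k},\mathcal{T}_{m,k})$ to a function-theory problem on the graph $G_{m,k}$ of frontier trees is a reasonable observation (every vertex of $Y_{m,k}$ lies on a white edge, so functions constant on frontier trees descend to $\mathcal{F}$), and your upper-bound heuristic is morally aligned with the paper's: choose increments decaying like $(m-1)^{-1}$ per combinatorial level against $(m-1)(k-1)$-fold branching, which gives finite $p$-energy exactly when $p>p_*$. The paper realizes this concretely by collapsing the planar complex $Y_{m,2}$ to a tiling of $\mathbb{H}^2$ by regular ideal $m$-gons and pulling back Lipschitz functions on $\mathbb{S}^1$; your sketch would need to be filled in with comparable care, in particular you would still have to verify continuity of the boundary extension rather than merely asserting it.

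The genuine gap is in the lower bound, and it is at the exact place you flag. The implication you rely on --- \emph{vanishing combinatorial $p$-capacity of every end} $\Rightarrow$ \emph{every finite $p$-energy function has constant boundary values} --- is false for general graphs: on $\mathbb{Z}$, the capacity from a finite set to either end vanishes for every $p>1$ (stretch a linear ramp over $N$ edges and let $N\to\infty$), yet $\operatorname{sign}(n)$ has finite $p$-energy and distinct boundary values. So capacity vanishing does not by itself kill $\ell_pH^1_{\mathrm{cont}}$; one needs some connectivity or Poincar\'e-type input, and you explicitly write that the Poincar\'e inequality at the critical exponent is an obstacle you ``anticipate'' but do not establish. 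Without that step the whole lower bound is missing. The paper sidesteps the problem entirely and takes a very different route: it isometrically embeds $Y_{m,k}$ into the right-angled Fuchsian building $\Delta_{m,k,\ell}$ as a piece cut out by frontier trees, uses Corollary~\ref{corcut1} to get $p_{\neq 0}(\Delta_{m,k,\ell})\le p_{m,k}$, imports the facts that $\partial\Delta_{m,k,\ell}$ satisfies the CLP and that its conformal dimension is known explicitly (Theorem~\ref{Confdim}(2) plus \cite{B1}), and then lets $\ell\to\infty$ to make that conformal dimension converge to $1+\log(k-1)/\log(m-1)$. Your direct-modulus plan would effectively have to re-derive those building results from scratch, which is precisely the hard content that the paper borrows rather than reproves.
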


\begin{proof}
We first establish the upper bound 
\begin{eqnarray}\label{q}
q_{m,k} \le 1 + \frac{\log (k-1)}{\log (m-1)}.
\end{eqnarray}
For this purpose we will construct some elements in  
$\ell_p H^1_{\textrm{cont}} (Y_{m,k}, \mathcal T _{m,k})$. 
Set  $Y_m := Y_{m,2}$ for simplicity. Observe that $Y_m$ is a planar polygonal complex.

For any choice of $2$-cells $c \subset Y_{m,k}$, $d \subset Y_m $ there
is an obvious continuous polygonal map $r_{c,d} : Y_{m,k} \to Y_m$ sending $c$ to $d$ and such that
for every $y_0 \in c$ and every $y \in Y_{m,k}$ one has $\vert r_{c,d}(y_0) - r_{c,d}(y) \vert =
\vert y_0 - y \vert$.

Observe that the frontier of $Y_m$ is an union of disjoint geodesics.
We will define a continuous map $\varphi : Y_m \cup \partial Y_m \to \hh ^2 \cup \S^1$
which maps every frontier geodesic of $Y_m$ to an ideal point in $\S^1$.
To do so, fix a $2$-cell $d \subset  Y_m $. At first we define $\varphi \restr_d $ so that
its image is a regular ideal $m$-gon. In other words $\varphi$ collapses every white edge of $d$
to an ideal point in $\S^1$, and these ideal points are $m$ regularly distributed points 
in $\S^1$.
For $n \in \hn$, let $P _n$ be the union of the $2$-cells of $Y_m$ whose combinatorial
distance to $d$ is less than or equal to $n$.  By induction we define  $\varphi$ on
the subcomplex $P_n$, so that 
\begin{itemize}
\item [(i)] The images of the
$2$-cells of $Y_m$ form a tesselation of $\hh ^2$ by ideal $m$-gons.
\item [(ii)] The images of the frontier geodesics of $Y_m$, passing through the subcomplex
$P_n$, are $m(m-1)^{n-1}$ ideal points equally spaced on $\S^1$.
\end{itemize}

Let $u : \S^1 \to \hr$ be a Lipschitz function. Since $\varphi (Y_m ^{(0)}) \subset \S^1$,
the composition $u \circ \varphi \circ r_{c,d}$ is well defined on $Y_{m,k} ^{(0)}$.
Let $f : Y_{m,k} ^{(0)} \to \hr$ be this function. By construction its restriction to every
frontier tree $T \subset \mathcal T _{m,k}$ is constant. Moreover $f$ extends continuously to 
$Y_{m,k} ^{(0)} \cup \partial Y_{m,k}$ since $r_{c,d}$ does. It remains to estimate
the $p$-norm of $df$. Let $C$ be the Lipschitz constant of $u$. 
There are 
$m(m-1)^n(k-1)^n$ 
black edges at the frontier of the subcomplex
$r_{c,d} ^{-1} (P_n)$.  If $a$ is such an edge, one has from property (ii) above : 
$$\vert df (a) \vert \le \frac{2 \pi C}{m(m-1)^{n-1}}.$$  
Therefore :
\begin{eqnarray*}
\Vert df \Vert _p ^p \; \lesssim \; \sum _{n \in \hn} \frac{(m-1)^n(k-1)^n}{(m-1)^{pn}}
\; = \; \sum _{n \in \hn} \big( (m-1)^{1-p} (k-1) \big) ^n.
\end{eqnarray*}
Thus $[f]$ belongs to $\ell_p H^1_{\textrm{cont}} (Y_{m,k}, \mathcal T _{m,k})$
for $(m-1)^{1-p} (k-1) < 1$ \textit{i.e.} for 
$$p > 1 + \frac{\log (k-1)}{\log (m-1)}.$$
In addition, by varying the $2$-cell $c$ and the fonction $u$,
one obtains functions $f$ that separate any given pair of distinct points $z_1,z_2 \in \partial Y_{m,k}$
such that $\{z_1,z_2\} \nsubseteq \partial T$ for every $T \in \mathcal T _{m,k}$.
Inequality (\ref{q}) now follows.

To establish the theorem it remains to prove :
\begin{eqnarray}\label{p}
p_{m,k} \ge  1 + \frac{\log (k-1)}{\log (m-1)}.
\end{eqnarray}
To do so we consider, for $m, k, \ell \ge 3$, the polygonal complex
$\Delta _{m, k, \ell}$ defined by the following properties :
\begin{itemize}
\item Every $2$-cell has perimeter $2m$.
\item Every edge is colored black or white, and the edge
colors on the boundary of each $2$-cell are alternating.
\item The thickness of every black edge is $k$, while the
thickness of white edges is $\ell$.  
\item The link of every vertex is the full bipartite graph
with $k + \ell$ vertices.
\end{itemize}
As with elementary polygonal complexes, we metrize $\De_{m,k,\ell}$ so that
each cell is a regular right-angled hyperbolic polygon, and hence $\De_{m,k,\ell}$ 
is a right-angled Fuchsian building.
The union of its white edges is a disjoint
union of totally geodesic trees in $\Delta _{m, k, \ell}$.
By cutting $\Delta _{m, k, \ell}$ along these trees, one divides $\Delta _{m, k, \ell}$
into subcomplexes, each isometric to $Y _{m,k}$. 
Let $Y \subset \Delta _{m, k, \ell}$ be one such subcomplex.  It  
decomposes $\Delta _{m, k, \ell}$. Denote by $\mathcal T$ its frontier.
By Corollary  \ref{corcut1} one gets that
$p_{\neq 0}(\Delta _{m, k, \ell} ) \le p_{\neq 0}(Y, \mathcal T ) =p_{m,k}$.
On the other hand $\partial \Delta _{m, k, \ell} $
is known to admit the CLP (see \cite{BP1, BourK}). Hence, with 
Theorem \ref{Confdim}(2) 
we \mbox{obtain :}
\begin{eqnarray*} 
p_{m,k} \ge p_{\neq 0}(\Delta _{m, k, \ell} ) = \Confdim (\partial \Delta _{m, k, \ell} ).
\end{eqnarray*}
>From \cite{B1} formula (0.2), one has $\Confdim (\partial \Delta _{m, k, \ell} ) = 1+\frac{1}{x}$
where $x$ is the unique positive number which satisfies :
$$\frac{(k-1)^x + (\ell -1)^x}{\big(1 + (k-1)^x \big)\big(1+ (\ell -1)^x \big)} = \frac{1}{m}.$$
By an easy computation we get that
$$\lim _{\ell \to +\infty} \Confdim (\partial \Delta _{m, k, \ell} ) = 
1 + \frac{\log (k-1)}{\log (m-1)}.$$
Inequality \ref{p} follows. 
\end{proof}

\begin{corollary} \label{corpq} Let $Y$ be an elementary polygonal complex. 
Assume that the perimeter of every $2$-cell of $Y$ lies in $[2m_1, 2m_2]$
and that the thickness of every black edge lies in $[k_1, k_2]$,
with $m_1 \ge 3$ and $k_1 \ge 2$. Then : \;
$$1 + \frac{\log (k_1-1)}{\log (m_2-1)}\le p_{\neq 0}(Y, \mathcal T) \le p_{\textrm{sep}}(Y, \mathcal T) 
\le 1 + \frac{\log (k_2-1)}{\log (m_1-1)}.$$
\end{corollary}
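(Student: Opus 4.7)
The plan is to prove each of the three inequalities separately. The middle inequality $p_{\neq 0}(Y,\mathcal T) \le p_{\textrm{sep}}(Y,\mathcal T)$ will be immediate: $Y$ is a nondegenerate hyperbolic complex (it satisfies the standing assumptions of Section~\ref{preliminaries}), so by the discussion following Proposition~\ref{critique-p}, every class in $\ell_p H^1_{\textrm{cont}}(Y,\mathcal T)$ is determined by its boundary extension, and hence separation of any pair of distinct boundary points forces nontriviality.

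For the upper bound, I would adapt the construction proving $q_{m,k} \le 1 + \log(k-1)/\log(m-1)$ in Theorem~\ref{theopq}. Fix a base $2$-cell $c_0 \subset Y$ and build a continuous collapsing map $\varphi \colon Y \cup \partial Y \to \mathbb{H}^2 \cup \S^1$ inductively on combinatorial distance from $c_0$, sending each $2$-cell of perimeter $2m_c$ to a regular ideal $m_c$-gon whose vertices lie on $\S^1$ (these vertices being the images of white edges, and hence of entire frontier trees). For any Lipschitz $u \colon \S^1 \to \mathbb{R}$, the function $f := u \circ \varphi$ restricted to $Y^{(0)}$ is constant on every frontier tree and extends continuously to $\partial Y$. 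To estimate $\|df\|_p^p$ I would organize the black edges into generations rooted at $c_0$: a black edge $a$ of thickness $k_a$, leading into a cell of perimeter $2m_c$, has $(k_a-1)(m_c-1)$ child edges each with $\S^1$-weight contracted by $1/(m_c-1)$, so the per-generation ratio is $(k_a-1)(m_c-1)^{1-p} \le (k_2-1)(m_1-1)^{1-p}$. The resulting geometric sum converges exactly when $p > 1 + \log(k_2-1)/\log(m_1-1)$. Varying $u$ and $c_0$ yields enough classes in $\ell_p H^1_{\textrm{cont}}(Y,\mathcal T)$ to separate any pair of points in $\partial Y$ not both contained in some $\partial T$.

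For the lower bound I would mimic the Fuchsian-building strategy from the second half of Theorem~\ref{theopq}. For each $\ell \ge 3$, glue $\ell$ isometric copies of $Y$ along each frontier tree to obtain a right-angled hyperbolic $2$-complex $\Delta_\ell$ in which every white edge of $Y$ has acquired thickness $\ell$; the inclusion $Y \hookrightarrow \Delta_\ell$ makes $Y$ a decomposing subcomplex with frontier $\mathcal T$, so Corollary~\ref{corcut1}(1) yields $p_{\neq 0}(\Delta_\ell) \le p_{\neq 0}(Y,\mathcal T)$. It will then suffice to show, via Theorem~\ref{Confdim}, that $\Confdim(\partial \Delta_\ell) \le p_{\neq 0}(\Delta_\ell)$, and that $\Confdim(\partial \Delta_\ell) \to 1 + \log(k_1-1)/\log(m_2-1)$ as $\ell \to \infty$. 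I would attempt this by locating inside $\Delta_\ell$ a sub-Fuchsian-building isomorphic to the regular $\Delta_{m_2,k_1,\ell}$, obtained as the $\ell$-inflation of a combinatorial subcomplex of $Y$ that threads only through cells of maximum perimeter $2m_2$ and black edges of minimum thickness $k_1$; the desired conformal-dimension asymptotics would then follow from the explicit computation in the proof of Theorem~\ref{theopq}.

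The hard part will be this last step of the lower bound. In the homogeneous case $Y = Y_{m,k}$, $\Delta_\ell$ coincides with the regular Fuchsian building $\Delta_{m,k,\ell}$, and both its CLP and the explicit formula for $\Confdim(\partial \Delta_{m,k,\ell})$ come for free from \cite{BP1, BourK, B1}. For irregular $Y$, $\Delta_\ell$ is not a regular building and no off-the-shelf CLP is at hand; the challenge is to extract from the non-regular $\Delta_\ell$ enough of the extremal $(m_2,k_1)$-geometry, or alternatively to obtain a direct combinatorial modulus estimate on $\partial Y$ via Theorem~\ref{modconf}, sufficient to recover the limit $1 + \log(k_1-1)/\log(m_2-1)$.
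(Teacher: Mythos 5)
Your middle inequality $p_{\neq 0}(Y,\mathcal T)\le p_{\textrm{sep}}(Y,\mathcal T)$ is fine; it is exactly the observation recorded at the end of Section~\ref{relative} once one knows $Y$ is nondegenerate. Your upper bound runs by a different route than the paper: you generalize the collapsing-map construction of $\varphi$ from the proof of Theorem~\ref{theopq} directly to the inhomogeneous complex $Y$ and estimate $\|df\|_p^p$ by a per-generation ratio, whereas the paper first inflates $Y$ to a complex $Y'$ of constant thickness $k_2$ and then embeds a copy of $Y_{m_1,k_2}$ into $Y'$ so that it decomposes $Y'$ (using that every $2m$-gon with $m\ge m_1$ contains an inscribed $2m_1$-gon with interior white edges), thereby reducing everything to the already-proved Theorem~\ref{theopq}. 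Your direct approach is plausible and can be made to work, but your bookkeeping is a bit loose: the $k_a-1$ cells beyond a given black edge $a$ need not all have the same perimeter $2m_c$, so the phrase ``$(k_a-1)(m_c-1)$ child edges each contracted by $1/(m_c-1)$'' should be replaced by a sum over those $k_a-1$ cells, with each cell of half-perimeter $m'$ contributing $m'-1$ children of weight $w_a/(m'-1)$, giving $\sum_{b}\,w_b^p\le (k_a-1)(m_1-1)^{1-p}w_a^p\le(k_2-1)(m_1-1)^{1-p}w_a^p$; you must also say explicitly how $\varphi$ treats the $k_a-1$ cells on the far side of a common black edge (they are all sent to ideal polygons sharing the geodesic $\varphi(a)$, so $\varphi$ is highly non-injective, which is fine but worth stating).

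The real problem is your lower bound, and you have in fact flagged it yourself. Gluing $\ell$ copies of $Y$ along frontier trees does produce a complex $\Delta_\ell$ with $p_{\neq 0}(\Delta_\ell)\le p_{\neq 0}(Y,\mathcal T)$ via Corollary~\ref{corcut1}(1), but you then need a lower bound on $p_{\neq 0}(\Delta_\ell)$ tending to $1+\log(k_1-1)/\log(m_2-1)$, and you have no tool to produce it: $\Delta_\ell$ is not a regular Fuchsian building when $Y$ is inhomogeneous, so you cannot invoke the CLP from \cite{BP1,BourK} to get $p_{\neq 0}(\Delta_\ell)=\Confdim(\partial\Delta_\ell)$, nor the explicit formula of \cite{B1}. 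Your proposed rescue --- locating a copy of the regular building $\Delta_{m_2,k_1,\ell}$ inside $\Delta_\ell$ --- requires $Y$ to contain an embedded $Y_{m_2,k_1}$, i.e.\ a combinatorially spanning subcomplex all of whose cells have the \emph{maximal} perimeter $2m_2$ and all of whose black edges have the \emph{minimal} thickness $k_1$; nothing in the hypotheses guarantees such a subcomplex exists. The correct ``similar'' argument, dual to the one the paper uses for the upper bound, goes the other way: reduce $Y$ to constant thickness $k_1$ and use that every $2m$-gon with $m\le m_2$ embeds into a $2m_2$-gon (black edges on black edges, white edges interior) to realize the reduced complex as a decomposing subcomplex of the \emph{regular} $Y_{m_2,k_1}$, whence Proposition~\ref{propcut} and Theorem~\ref{theopq} give the desired inequality. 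As written, your lower bound is not a proof.
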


\begin{proof} We first establish the last inequality. 
By adding $2$-cells to $Y$ if necessary, we obtain an elementary polygonal complex $Y'$, whose 
black edges are of constant thickness $k_2$, and whose $2$-cell perimeters are larger than or equal
to $2m_1$ . Let $\mathcal T '$ be its frontier.
The cellular embedding $(Y, \mathcal T) \to (Y', \mathcal T')$
induces a restriction map 
$\ell_p H^1 _{\textrm{cont}} (Y', \mathcal T ') \to \ell_p H^1 _{\textrm{cont}}(Y, \mathcal T )$,
which leads to 
$p_{\textrm{sep}} (Y, \mathcal T) \le p_{\textrm{sep}} (Y', \mathcal T')$.

We now compare $p_{\textrm{sep}} (Y', \mathcal T')$ with $q _{m_1 , k_2}$.
Observe that any polygon $P$ of perimeter larger than $2m_1$ contains a $2m_1$-gon whose black 
edges are contained in black edges of $P$, and whose white edges are contained in
the interior of $P$.  With this observation 
one can construct  embeddings
$\varphi : Y_{m_1 , k_2} \to Y'$ in such a way that   $\varphi (Y_{m_1 , k_2})$ 
decomposes $Y'$ and   Proposition \ref{propcut}  yields 
a monomorphism $$\ell_p H^1 _{\textrm{cont}}(Y _{m_1 , k_2} , \mathcal T _{m_1 , k_2} ) 
\hookrightarrow \ell_p H^1 _{\textrm{cont}}(Y', \mathcal T ').$$
For $p> q _{m_1 , k_2}$,  
by varying the embedding $\varphi$,
we see that $\ell_p H^1 _{\textrm{cont}}(Y', \mathcal T ')$
separates any given pair of distinct points $z_1,z_2 \in \partial Y'$
such that $\{z_1,z_2\} \nsubseteq \partial T'$ for every $T' \in \mathcal T'$.
Therefore
$p_{\textrm{sep}} (Y', \mathcal T') \le q _{m_1 , k_2}$, 
and the last inequality of Corollary \ref{corpq} follows from Theorem \ref{theopq}.

The first inequality can be proved in a similar way.
\end{proof}

\section{Applications to polygonal complexes}\label{polygonal}

Building on earlier Sections \ref{cohoboundary} and \ref{elementary}, this section derives some results
on the conformal dimension of polygonal complex boundaries (Proposition
\ref{propappli}).

Let $X$ be a polygonal complex. If the perimeter of every
$2$-cell is larger than or equal to $7$, then we identify every $2$-cell 
with a constant negative curvature 
regular polygon of unit length edges and of angles 
$\frac{2\pi}{3}$. The resulting length metric on $X$ is of negative curvature, in particular
$X$ is hyperbolic.

If  every
$2$-cell has perimeter at least  $5$, and the link of of every vertex contains
no circuit of length $3$, then we identify every $2$-cell with a constant negative curvature 
regular polygon of unit length edges and of angles $\frac{\pi}{2}$. Again the resulting length metric 
on $X$ is of negative curvature. 

The following result gives a upper bound for the conformal dimension of the boundary at infinity
of such polygonal complexes. In some cases a lower bound can be obtained
by M. Gromov's method of ``round trees'', see \cite{G} p.\! 207, \cite{B0}, \cite{Mac2}.

\begin{proposition}\label{propappli}
Let $X$ be a polygonal complex of negative curvature as above; 
suppose that $\partial X$ is connected and approximately
self-similar. 
\begin{enumerate}
\item If the perimeter of every $2$-cell is at least $n \ge 5$,
the thickness of every edge is at most $k \ge 2$, and the link of every vertex contains
no circuit of  length $3$, then 
$$\Confdim (\partial X) \le 1 + \frac{\log (k-1)}{\log (n-3)}.$$
\item If the perimeter of every $2$-cell is at least $n \ge 7$ and
the thickness of every edge is at most $k \ge 2$, then
$$\Confdim (\partial X) \le 1 + \frac{\log (k-1)}{\log (n-5)}.$$
\end{enumerate} 
\end{proposition}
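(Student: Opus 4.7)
\emph{Proof sketch.} The plan is to establish the bound via three reductions. First, since $X$ is hyperbolic and nondegenerate and $\partial X$ is connected and approximately self-similar, Theorem \ref{Confdim}(1) gives $\Confdim(\partial X) = p_{\textrm{sep}}(X)$, so it suffices to bound $p_{\textrm{sep}}(X)$. Second, Corollary \ref{corcut1}(2) reduces this to producing a family $\{Y_j\}_{j \in J}$ of subcomplexes of a suitable subdivision $X'$ of $X$ (with the same boundary as $X$) that fully decomposes $X'$ in the sense of Definition \ref{defcut}, together with a uniform bound on $p_{\textrm{sep}}(Y_j, W_j)$. Third, Corollary \ref{corpq} estimates $p_{\textrm{sep}}(Y_j, W_j)$ in terms of the perimeters and black-edge thicknesses of $Y_j$, provided each $Y_j$ is an elementary polygonal complex.

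The technical heart of the argument is the combinatorial construction of the elementary polygonal subcomplexes $Y_j \subset X'$. For each pair of distinct points $z_1, z_2 \in \partial X$ that must be separated, I would construct $Y = Y(z_1, z_2)$ as follows. Each 2-cell of $Y$ would come from a 2-cell $P$ of $X$ of perimeter $p \geq n$ by retaining a subset of the original sides of $P$ as \emph{black} edges of $Y$ and interpolating fresh \emph{white} chords inside $P$ between consecutive retained sides, producing a polygon whose boundary alternates black and white. In case (1) the right-angled structure together with the no-3-cycle-in-links condition permits retaining at least $n-2$ original sides per 2-cell; in case (2) the larger perimeter bound compensates for the absence of the link condition and allows keeping at least $n-4$ sides. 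The chords must be inserted coherently across adjacent 2-cells of $X$ so that $Y$ is connected, simply connected, quasi-convex in $X'$, with every white edge of thickness one and every black edge of thickness at most $k$ in $X'$. Varying the choices of 2-cells and chord patterns produces a family $\{Y_j\}_{j\in J}$ that fully decomposes $X'$.

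Given this construction, each $Y_j$ is an elementary polygonal complex of minimum half-perimeter $m_1 \geq n-2$ (case 1) or $m_1 \geq n-4$ (case 2), whose black edges have thickness at most $k$. Corollary \ref{corpq} then yields
\[
p_{\textrm{sep}}(Y_j, W_j) \;\leq\; 1 + \frac{\log(k-1)}{\log(m_1-1)},
\]
which gives $p_{\textrm{sep}}(Y_j, W_j) \leq 1 + \log(k-1)/\log(n-3)$ in case (1) and $p_{\textrm{sep}}(Y_j, W_j) \leq 1 + \log(k-1)/\log(n-5)$ in case (2). Taking the supremum over $j$ and invoking Corollary \ref{corcut1}(2) delivers the claimed bound on $p_{\textrm{sep}}(X) = \Confdim(\partial X)$.

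The main obstacle is the combinatorial construction of the family $\{Y_j\}$: one must simultaneously guarantee the correct perimeter lower bound (to produce the $n-3$ or $n-5$ exponent), keep the black-edge thickness bounded by $k$, maintain alternation of colors around every 2-cell of $Y_j$, and achieve simple connectivity, quasi-convexity, and the full-decomposition property of Definition \ref{defcut}. The ``loss of $2$'' (case 1) or ``loss of $4$'' (case 2) from $n$ is intrinsic to this carving procedure: without the 3-cycle restriction on links (case 2), coherent insertion of the chords across neighbouring 2-cells forces one to drop more original sides per 2-cell, which accounts for the difference between the two exponents.
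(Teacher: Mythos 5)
Your reduction chain --- Theorem \ref{Confdim}(1) to pass from $\Confdim(\partial X)$ to $p_{\textrm{sep}}(X)$, Corollary \ref{corcut1}(2) to reduce to a fully decomposing family of elementary polygonal subcomplexes, and Corollary \ref{corpq} to bound each $p_{\textrm{sep}}(Y_j,W_j)$ --- coincides with the paper's, and so do your perimeter counts: retain $p-2$ sides of each $p$-gon in case (1), $p-4$ in case (2), yielding the $n-3$ and $n-5$ denominators. However, the step you explicitly flag as ``the main obstacle'' is in fact the substance of the proof, and your sketch does not resolve it: nothing in the proposal specifies which sides to drop, why the carved subcomplex is quasiconvex, or why the resulting family fully decomposes $X$.

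The paper settles this by indexing the family by edges $e\in X^{(1)}$ rather than by pairs of boundary points. For each $e$ it builds a \emph{rooted directed tree} $T_e$ by a local propagation rule: from the midpoint of $e$ to the center $O$ of each $2$-cell $c\supset e$, then from $O$ to the midpoints of those edges $e'\subset\partial c$ that are neither equal nor adjacent to $e$ (case (1)), respectively that lie at combinatorial distance at least two from $e$ along $\partial c$ (case (2)), and recursing. The elementary polygonal complex $Y_e$ is the $\epsilon$-neighborhood of $T_e$. The crucial lemma --- absent from your proposal --- is that $T_e$, hence $Y_e$, is bi-Lipschitz embedded in $X$. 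This is proved by exhibiting, for each directed path in $T_e$, a \emph{convex} subset of $X$ containing the $2$-cells it meets (augmented, in case (2), by small geodesic simplices $\sigma_x$ inserted near each crossing vertex $x$), via explicit angle bounds in the respective negatively curved metrics. This is also where the two sets of hypotheses really bite, and for a different reason than you suggest: in case (1), the no-$3$-cycle link condition together with perimeter $\ge 5$ is what allows the right-angled $\mathrm{CAT}(-1)$ metric, and with right angles the union of consecutive $2$-cells along a directed path is already convex; in case (2) the interior angles are $2\pi/3$, the bare union is no longer convex, and one must discard the two nearest sides on each side of $e$ and pad with the simplices $\sigma_x$ to restore convexity --- and that padding is precisely what costs the further two sides per cell. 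So the ``loss of $2$ versus loss of $4$'' is the output of an angle-and-convexity analysis, not, as you write, of a difficulty in ``coherently inserting the chords across neighbouring $2$-cells.'' Until you supply an argument of this kind, the proposal remains a correct outline with its central lemma unproved.
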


\begin{proof} We shall construct a family of embedded elementary polygonal complexes
that fully decomposes $X$. The statement will then follow from Corollaries \ref{corcut1}(2),
\ref{corpq}
and Theorem \ref{Confdim}(1).

(1) First we associate to every edge $e \subset X$ a \emph{rooted directed tree} 
defined by the following process :
\begin{itemize}
\item[(A)] Join by a directed segment the middle of $e$ to the center $O$ of every $2$-cell $c$ containing $e$,
\item[(B)] Connect by a directed segment the center $O$ to the middle of every edge $e'$ of $c$ 
that is not adjacent nor equal to $e$,
\item[(C)] Restart the process with $e'$ and every $2$-cell distinct from $c$ that contains
$e'$.
\end{itemize}
Let $T_e$ be the resulting graph in $X$.  We claim that $T_e$ is a bi-Lipschitz
embedded tree in $X$. To do so, we first notice that from its definition 
every edge of $T_e$ admits a direction. 
A path in $T_e$ will be called a \emph{directed path} if its edges are all
directed in the same way. Pick a directed path $\gamma \subset T_e$.
Since $X$ is right angled and non-positively curved, 
item (B) in combination with angle considerations implies
that the union of the $2$-cells met by $\gamma$
is a convex subset of $X$.  Therefore  
all directed paths are uniformly bilipschitz
embedded in $X$. In addition, two different directed paths in $T_e$ 
issuing from the same point have distinct endpoints. Thus $T_e$ is an embedded
tree in $X$. Moreover every backtrack free path in $T_e$ is the union of at most two
directed paths making an angle bounded away from $0$. 
Since $X$ is non-positively curved, one obtains that all backtrack free paths in $T_e$ are
uniformly bilipschitz embedded. The claim follows now easily.

Pick an $0< \epsilon \le \frac{1}{4}$ and consider the $\epsilon$-neighborhood
$Y_e$ of $T_e$ in $X$. This is a bi-Lipschitz embedded elementary polygonal complex. The perimeter
of every of its $2$-cell is larger than or equal to $2(n-2)$, and the thickness of its edges
is smaller than or equal to $k$. The family $\{Y_e \}_{e \in X^{(1)}}$ fully decomposes
$X$.  Indeed, for a given pair of distinct points $z_1, z_2 \in \partial X$, 
let $c \in X^{(2)}$ be a $2$-cell such that the geodesic $(z_1, z_2)$ connects two
non-adjacent sides $e_1, e_2$ of $c$ (possibly $(z_1,z_2)$ intersects $c$ in a 
side $e_3$ of $c$ which joins $e_1$ to $e_2$). 
Let $e$ be a side of $c$ which is different from $e_1$ and
$e_2$.   Then $Y_e$ separates $z_1$ and $z_2$.
Therefore according to Theorem \ref{Confdim}(1), Corollaries \ref{corcut1}(2)
and \ref{corpq} we get 
$$\Confdim (\partial X) = p_{\textrm{sep}} (X) 
\le \sup _{e \in X^{(1)}} p_{\textrm{sep}}(Y_e, \mathcal T_e) \le 1 + \frac{\log (k-1)}{\log (n-3)},$$
where $\mathcal T_e$ denotes the frontier of $Y_e$.

(2). The method is the same apart from a slight modification in the construction process
of the rooted directed trees. Item (B) becomes :
\begin{itemize}
\item[(B')] Connect $O$ to the middle of every edge $e'$
of $c$ whose combinatorial distance to $e$ is at least $2$.
\end{itemize} 
We claim that $T_e$ is again a
bi-Lipschitz embedded tree in $X$. To see this, consider again a directed path $\gamma \subset T_e$ and
the $2$-cells $c_1, ... , c_n \subset X$ successively met by $\gamma$.  Their union is not convex
in $X$ but a slight modification is. Indeed pick $i \in \{1, ... ,n-1\}$, let $x$ be one of the vertices
of the segment $c_i \cap c_{i+1}$, and let $y \in c_i \setminus c_{i+1}$, $z \in  c_{i+1} \setminus c_i$
be the vertices adjacent to $x$. Denote by $\sigma _x $ the convex hull of the 
subset $\{x,y,z\} \subset X$. By considering the link at $x$, one sees that it either degenerates to the union
$[yx] \cup [xz]$, or it is a simplex contained in the unique $2$-cell which contains $x,y,z$.
Since $X$ is a non positively curved polygonal complex with unit length edges
and $\frac{2\pi}{3}$ angles, the angles of $\sigma _x$ at $y$ and $z$ are smaller
than $\frac{\pi}{6}$. Moreover item (B') implies that $y \notin c_{i-1}$ and $z \notin c_{i+2}$.
It follows from angle considerations that the union 
$$(\cup _i  c_i) \cup (\cup _x \sigma _x)$$
defines a locally convex isometric immersion of an abstract CAT($0$) space into $X$,
and it is therefore a global embedding with convex image. 

The rest of the proof is similar to case (1). 
Here the $2$-cells of the associated elementary polygonal complex
$Y_e$ have perimeter larger than or equal to $2(n-4)$ because of (B').
\end{proof}

\bigskip
\begin{example} 
\label{ex-johns_question}
Assume that the link of every vertex is the $1$-skeleton of the 
$k$-dimensional cube and that the perimeter of every $2$-cell is equal to $n \ge 5$.
Then Proposition \ref{propappli}(1) in combination
with the lower bound established in \cite{B0} p.140 gives
$$ 1 + \frac{\log (k-1)}{\log(n-3) + \log 15} \le \Confdim (\partial X) \le
1 + \frac{\log (k-1)}{\log(n-3)}.$$
When the link is the $1$-skeleton of the $3$-dimensional cube $\partial X$
is homeomorphic to the Sierpinski carpet.  For even $n$, examples of such complexes are provided
by Davis complexes of Coxeter groups; their boundaries admit the CLP (see \cite{BourK} 9.4). 
Therefore one obtains examples
of Sierpinski carpet boundaries satisfying the CLP, whose conformal dimension is arbitrarily close to $1$.
This answers a question of  J. Heinonen and J. Mackay. 
\end{example}

\bigskip
\begin{example} \label{exvisual} Assume now that the link of every vertex is the full bipartite graph with 
$k+k$ vertices and that the perimeter of every $2$-cell is equal to $n \ge 5$.
>From \cite{B1} one has 
$$\Confdim (\partial X) = 1 + \frac{\log (k-1)}{\log\big(\frac{n}{2}-1 + \sqrt{(\frac{n}{2}-1)^2 -1}\big)},$$
which is quite close to the upper bound obtained in Proposition \ref{propappli}. 
We emphasize that the Hausdorff dimensions of the visual metrics on $\partial X$ 
do not give in general such precise upper bounds.
For example it follows from \cite{LL} that if every $2$-cell of $X$ is isometric to the same right
angled polygon $P \subset \mathbb H^2$, then the Hausdorff dimension 
of the associated visual metric is larger
than 
$$1 + \frac{\mathrm{length}(\partial P)}{\mathrm{area}(P)} \log(k-1).$$
\end{example} 
 
\section{Applications to Coxeter groups}\label{sectioncox}

This section applies earlier results for the invariants 
$\Confdim (\partial \Gamma)$ and $p_{\neq 0}(\Gamma)$ to 
Coxeter groups $\Gamma$ (Corollary 
\ref{corcox} and Proposition \ref{propcox})

Recall that a group $\Gamma$ is a \emph{Coxeter group} if it admits
a presentation of the form
$$ \Gamma = \langle s , ~ s \in S ~ \vert ~ s ^2 =1, ~(s t)^{m_{st}} =1
~\textrm{for}~ s \neq t \rangle , $$
with $\vert S \vert < + \infty $, and  $m_{st}   \in \{2,3, ..., + \infty \} $.
To such a presentation one associates a finite simplicial graph $L$ whose vertices
are the elements $s \in S$ and whose edges join the pairs $(s,t)$ such that
$s \neq t$ and $m_{st} \neq +\infty$. 
We label every edge of $L$ with the corresponding integer $m_{st}$. 
The labelled graph $L$ is called the \emph{defining graph} of $\Gamma$. 
The valence of a vertex $s \in L^{(0)}$ will be denoted by $\val(s)$.

Using Proposition \ref{propappli} we will deduce :

\begin{corollary} \label{corcox} 
Suppose $\Gamma$ is a Coxeter group with defining graph $L$.
\begin{enumerate}
\item If for every $(s,t) \in L^{(1)}$ one has $\val(s) \le k$, 
$m_{st} \ge m \ge 3$ and 
$L$ contains no circuit of length $3$, then $\Gamma$ is hyperbolic and
$$\Confdim (\partial \Gamma) \le 1 + \frac{\log (k - 1)}{\log (2m - 3)}.$$
\item If for every $(s,t) \in L^{(1)}$ one has $\val(s) \le k$,
$m_{st} \ge m \ge 4$, then $\Gamma$ is hyperbolic and
$$\Confdim (\partial \Gamma) \le 1 + \frac{\log (k - 1)}{\log (2m - 5)}.$$ 
\end{enumerate}
\end{corollary}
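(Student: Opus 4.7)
My plan is to realize $\Gamma$ as acting geometrically on a polygonal complex of the form required by Proposition \ref{propappli}, and to derive the bound from that proposition.

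Take $X$ to be the Davis $2$-complex of $(\Gamma,S)$: its $0$-skeleton is $\Gamma$, its $1$-skeleton is the Cayley graph (with the bigons coming from the relations $s^{2}=1$ folded), and for every $\{s,t\}\in L^{(1)}$ and every coset $g\langle s,t\rangle$ one attaches a $2m_{st}$-gon along the corresponding cycle. I first check that $X$ is genuinely two-dimensional, i.e.\! that $(\Gamma,S)$ has no rank-$3$ spherical subset. In case (1), any such subset $\{s,t,u\}$ would have all three labels $m_{st},m_{tu},m_{su}$ finite, forcing a $3$-circuit in $L$; in case (2), with labels $\ge 4$ we have $\tfrac{1}{m_{st}}+\tfrac{1}{m_{tu}}+\tfrac{1}{m_{su}}\le\tfrac{3}{4}<1$, excluding a spherical triple. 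Thus $X$ is a polygonal $2$-complex and $\Gamma$ acts cocompactly and properly on it.

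Straight from the construction, every $2$-cell of $X$ has perimeter $2m_{st}\ge 2m$; an edge labelled $s$ is contained in exactly one $2$-cell per neighbour $t$ of $s$ in $L$, so its thickness equals $\val(s)\le k$; and at every vertex $g$ the link is canonically isomorphic to $L$, since the unique $2$-cell at $g$ indexed by $g\langle s,t\rangle$ contributes the link-edge between the $s$- and $t$-vertices. I then endow $X$ with the metric of Section \ref{polygonal}: regular right-angled hyperbolic $2m_{st}$-gons in case (1) (allowable since $2m_{st}\ge 6$), respectively regular hyperbolic $2m_{st}$-gons with interior angle $2\pi/3$ in case (2) (allowable since $2m_{st}\ge 8$). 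In case (1), the no-$3$-circuit hypothesis on $L$ combined with the link identification is exactly the Gromov link condition for $X$ to be $\mathrm{CAT}(-1)$, so $\Gamma$ is hyperbolic. In case (2), Moussong's criterion applies: the hypothesis $m_{st}\ge 4$ excludes every rank-$3$ spherical or Euclidean parabolic, and excludes $D_\infty\times D_\infty$ subgroups (which would demand commutations $m_{ij}=2$), so $\Gamma$ is again hyperbolic; if the $2\pi/3$-metric on $X$ itself fails the link condition because $L$ has short circuits, one replaces $X$ by an appropriate subdivision -- or equivalently uses the Moussong--Charney--Davis $\mathrm{CAT}(-1)$ model of $X$ -- without changing either the boundary or the combinatorial data fed into Proposition \ref{propappli}.

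Since $\Gamma$ acts cocompactly on the hyperbolic space $X$, its boundary $\partial\Gamma=\partial X$ is approximately self-similar. After reducing to the one-ended case via a Dunwoody decomposition over finite subgroups, so that $\partial X$ is connected, Proposition \ref{propappli}(1), respectively (2), with $n=2m$ delivers the announced upper bounds on $\Confdim(\partial\Gamma)$. The main obstacle is in case (2): when $L$ has circuits of length $\le 5$, the $2\pi/3$-angle metric on the Davis complex itself fails to be $\mathrm{CAT}(-1)$, and one must justify carefully that an equivalent polygonal complex meeting Section \ref{polygonal}'s negative curvature requirements is available; every other step reduces to a direct inspection of the Davis complex combinatorics.
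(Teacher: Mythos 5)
Your proof follows the same route as the paper: realize $\Gamma$ via its Davis complex, observe that under the hypotheses this is a negatively curved polygonal $2$-complex, and apply Proposition \ref{propappli} with $n=2m$ (the paper reduces to connected $\partial X$ by splitting $\Gamma$ as a free product over the connected components of $L$, rather than via Dunwoody, but these amount to the same thing here).

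The one substantive disagreement is that the ``main obstacle'' you flag in case (2) is not actually there. With interior angles $2\pi/3$, each edge of the vertex link $L$ has length $2\pi/3$, and since $L$ is a simple simplicial graph every circuit has combinatorial length at least $3$, hence metric length at least $3\cdot 2\pi/3 = 2\pi$. Thus Gromov's link condition (links are $\mathrm{CAT}(1)$, i.e.\! systole $\ge 2\pi$) holds automatically, regardless of whether $L$ has short circuits; and $m_{st}\ge 4$ ensures each $2m_{st}$-gon with angle $2\pi/3$ is genuinely hyperbolic. So the $2\pi/3$-metric on the Davis complex is already $\mathrm{CAT}(-1)$, and no subdivision or appeal to the Moussong--Charney--Davis model is needed. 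Indeed, subdividing would change the perimeter and thickness data you feed into Proposition \ref{propappli}, so your fallback would require justification if it were ever invoked --- fortunately it isn't.
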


The above corollary shows that  global bounds for the  valence 
and the integers $m_{st}$ yield upper bounds for   the conformal dimension.
In contrast, the following result asserts that  local bounds are enough to obtain 
upper bounds for  $p_{\neq 0}(\Gamma)$.

\begin{proposition}\label{propcox} Suppose $\Gamma$ is a hyperbolic Coxeter group
with defining graph $L$. 
For $s \in L^{(0)}$ set $m_s := \inf _{(s,t) \in L^{(1)}} m_{st}$.
\begin{enumerate}
\item Suppose that there is an $s \in L^{(0)}$ with $\val (s) \ge 2$, $m_s \ge 3$ which does not
belong to any length $3$ circuit of $L$. Then  
$$p_{\neq 0}(\Gamma) \le 1 + \frac{\log (\val (s) - 1)}{\log (m_s - 1)}.$$
\item Suppose that there is an $s \in L^{(0)}$ with $\val (s) \ge 2$, $m_s \ge 5$. Then 
$$p_{\neq 0}(\Gamma) \le 1 + \frac{\log (\val (s) - 1)}{\log (m_s - 3)}.$$
\end{enumerate}
\end{proposition}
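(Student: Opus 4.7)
The plan is to run, inside the Davis--Moussong polygonal complex $\Sigma$ of $\Gamma$, an adaptation of the construction of Proposition~\ref{propappli}, but this time producing only a single embedded elementary polygonal subcomplex (rather than a fully decomposing family) and using only the \emph{local} data at the distinguished vertex $s \in L^{(0)}$. Recall that the $2$-cells of $\Sigma$ are indexed by the edges $(s',t') \in L^{(1)}$ and are $2m_{s't'}$-gons whose boundary edges are labelled alternately $s'$ and $t'$; in particular every edge of $\Sigma$ labelled $u$ has thickness $\val(u)$. The hyperbolicity of $\Gamma$ lets us endow $\Sigma$ with a piecewise hyperbolic CAT$(-1)$ metric in which each $2$-cell is a regular hyperbolic polygon with appropriate angles.

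Fix an edge $e_0 \subset \Sigma$ labelled $s$. I would build a rooted directed tree $T_{e_0} \subset \Sigma$ by the branching recipe of Proposition~\ref{propappli}, propagating \emph{only through $s$-labelled edges}: alternate between midpoints of $s$-edges and centers of the $2$-cells containing them, and from a $2$-cell $C = \langle s, t\rangle$ reached via an incoming $s$-edge $e'$, continue to every other $s$-edge of $C$ in case~(1), respectively to each $s$-edge of $C$ whose boundary distance from $e'$ in $\partial C$ is at least $4$ in case~(2). Let $Y \subset \Sigma$ be a small cellular neighborhood of $T_{e_0}$. By construction, its black edges are $s$-labelled with thickness $\val(s)$ in $Y$ (all $\val(s)$ neighboring $2$-cells of $\Sigma$ at each visited $s$-edge are swept in), whereas its white edges are $t$-labelled for varying $t$ and lie in a single $2$-cell of $Y$; moreover its $2$-cells have half-perimeter at least $m_s$ in case~(1), at least $m_s - 2$ in case~(2).

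Next I would verify that $Y$ is an embedded elementary polygonal complex which \emph{decomposes} $\Sigma$ in the sense of Definition~\ref{defcut}. Following the template of Proposition~\ref{propappli}, this reduces to a local-convexity analysis of the directed paths in $T_{e_0}$: in case~(1), the hypothesis that $s$ lies on no length-$3$ circuit of $L$ guarantees that two consecutive $2$-cells $\langle s, t\rangle$, $\langle s, t'\rangle$ sharing an $s$-edge meet in that edge alone, so that the angle balance at the shared vertices produces local convexity; in case~(2) the hypothesis $m_s \ge 5$ combined with the distance-$4$ skip rule plays the same role. Quasi-convexity of $Y$ in $\Sigma$ and the required separation and limit-set collapse of the frontier components of $\mathcal T := Y \setminus \inte(Y)$ then follow as at the end of the proof of Proposition~\ref{propappli}. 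Corollary~\ref{corcut1}(1) now gives $p_{\neq 0}(\Gamma) \le p_{\neq 0}(Y, \mathcal T)$, and Corollary~\ref{corpq} applied with $k_2 = \val(s)$ and $m_1 = m_s$ in case~(1), respectively $m_1 = m_s - 2$ in case~(2), yields the two claimed upper bounds.

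The main obstacle is precisely this local-convexity verification: one must ensure that the $s$-only branching never causes two directed paths in $T_{e_0}$ to meet at a vertex of $\Sigma$ in a way that violates the elementary polygonal structure (mis-aligned boundary colors, or a white edge of thickness $>1$ in $Y$), and that the resulting union of swept $s$-edges and $2$-cells is globally convex, hence quasi-convex in $\Sigma$. The two hypotheses on $s$ are exactly the combinatorial/angle conditions that make this verification go through, mirroring the roles of ``no length-$3$ circuit in the link'' and ``perimeter $\ge 7$'' in the two cases of Proposition~\ref{propappli}.
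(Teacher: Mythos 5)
For part (2) your tree construction, with the propagation rule skipping the two nearest black edges (boundary distance $\ge 4$), the metric modification, and the local-convexity argument, is essentially the paper's own proof: the paper works inside a connected component $\Xi$ of the union of $(s,t)$-cells, builds a rooted directed tree by exactly that rule, modifies each $2$-cell to have angle $\frac{3\pi}{4}$, and proves bi-Lipschitz embeddedness by showing that $(\cup_i c_i)\cup(\cup_x\sigma_x)$ is convex. So there you have reproduced the paper's argument faithfully.

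For part (1) you diverge from the paper, and the paper's route is substantially slicker. Where you propose running the tree-and-$\epsilon$-neighborhood machinery inside the union of $(s,t)$-cells, the paper observes that this union is (one copy of) the Davis complex $X_A$ of the special subgroup $A = \langle \{s\}\cup J\rangle$, where $J$ is the set of $L$-neighbours of $s$. The hypothesis that $s$ lies on no length-$3$ circuit has two immediate algebraic consequences: (i) the defining graph of $A$ is a star, so every $t$-labelled edge of $X_A$ has thickness $1$ and $X_A$ is \emph{already} an elementary polygonal complex, no trimming required; and (ii) $\Gamma_K$ is infinite for every $K\supsetneq\{s,t\}$ with $t\in J$, so the $(s,t)$-cells carry no higher-dimensional cells and $X_A\setminus \mathcal T_A$ is open in $X$. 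Together with the standard fact that Davis complexes of special subgroups embed convexly, this gives directly that $X_A$ decomposes $X$; Corollaries \ref{corcut1}(1) and \ref{corpq} then finish, with no directed trees, no $\epsilon$-neighborhoods, and no convexity bookkeeping. Your version can presumably be pushed through, but it requires more care than you indicate: the natural Moussong angles on the $(s,t)$-cells are $\pi-\frac{\pi}{m_{st}}>\frac{\pi}{2}$, so ``angle balance at the shared vertices'' does \emph{not} give local convexity without first shrinking those cells to be right-angled, and one then has to check this keeps $X$ nonpositively curved (which again uses the no-length-$3$-circuit hypothesis in the links). Also, in a polygonal complex two $2$-cells sharing an $s$-edge automatically meet in nothing more than that edge, so the reason you invoke for the hypothesis is not where it is actually needed: its role is to forbid $3$-circuits in the vertex links after the angle change and to keep the $(s,t)$-cells open in $X$. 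None of this is a fatal gap, but the paper's subgroup argument bypasses it entirely, and it would be worth recognizing that the union of swept cells you are constructing a tree inside is precisely a convex Davis subcomplex that already has the desired elementary polygonal structure.
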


\begin{remark*} The definition of the invariant $p_{\neq 0}(\Gamma)$ given in Section \ref{equivalence} 
requires $\Gamma$ to be hyperbolic. However the above proposition extends to non hyperbolic 
Coxeter groups as well.
In this case the conclusion is simply that $\ell _p H^1 (\Gamma) \neq 0$ for $p$ larger than
the right hand side.
\end{remark*}

The proofs of Corollary \ref{corcox} and Proposition \ref{propcox} rely on the fact that
every Coxeter group $\Gamma$ has a  
properly discontinuous, cocompact, isometric action on a $CAT(0)$
cellular complex $X$ called the Davis complex of $\Gamma$. We list below some of its properties
(see \cite{D} Chapters 7 and 12 for more details).

For $I \subset S$, denote by $\Gamma \!_I$ the subgroup of $\Gamma$ generated by $I$.
It is again a Coxeter group; its defining graph is the maximal subgraph of $L$ whose vertex set is $I$.
By attaching a simplex to every subset $I \subset S$ such that $\Gamma \!_I$ is finite, one obtains
a simplicial complex $\Sigma L$ whose $1$-skeleton is the graph $L$. The Davis complex $X$ enjoys the
following properties :
\begin{itemize}
\item  The  $1$-skeleton of $X$  identifies naturally with the Cayley graph of 
$(\Gamma,S)$; in particular every edge of $X$ is
labelled by a generator $s \in S$.
\item Every $k$-cell is isometric to a $k$-dimensional Euclidean polytope.
\item The link of every vertex is isomorphic to the simplicial complex $\Sigma L$.
\item For $(s,t) \in L^{(1)}$ the corresponding $2$-cells of $X$
are regular Euclidean polygons of perimeter $2m_{st}$ and unit length edges 
alternately labelled $s$ and $t$. More generally if $I \subset S$ spans a 
$k$-simplex in $\Sigma L$, then the corresponding
$k$-cells of $X$ are isometric to the Euclidean polytope which is 
the Davis complex of the finite Coxeter group $\Gamma \!_I$.
\item For every $I \subset S$, the Davis complex of $\Gamma \!_I$ has a canonical
isometric embedding in $X$.
\end{itemize}

\begin{proof}[Proof of Corollary \ref{corcox}] 
Given a free product $G = A \star B$ of two hyperbolic groups $A,B$, it is well known that 
$$\Confdim (\partial G) = \max \{\Confdim (\partial A), ~ \Confdim (\partial B)\}. $$
Thus, by decomposing $\Gamma$ as a free product, where each factor is a Coxeter group associated to
a connected component of $L$, we can and will restrict ourself to the case $L$ is connected. 

For $I \subset S$, the hypotheses in (1) or (2) imply that the subgroup $\Gamma \!_I$ is finite if and only if 
$\vert I \vert = 1 ~\mathrm{or}~ 2$. Therefore one has $\Sigma L =L$ and the
Davis' complex $X$ is a polygonal complex. It clearly satisfies the hypothesis (1) or (2) 
in Proposition \ref{propappli} with $n =2m$. 
The corollary follows.
\end{proof}

\begin{proof} [Proof of Proposition \ref{propcox}]

(1). Pick $s \in L^{(0)}$ as in the statement, set $J := \{t \in S ~;~ (s,t) \in L^{(1)}\}$ 
and consider the Coxeter subgroup $A \leqq \Gamma$ 
generated by $\{s\} \cup J$. Its Davis complex, denoted by $X_A$, isometrically
embeds in $X$.

Since there is no length $3$ circuit containing $s$, the defining graph of $A$ 
consists only of segments joining $s$ to its neighbour vertices
$t \in J$. Therefore $X _A$ is an elementary polygonal complex; its black edges
are those labelled by $s$ and the white ones are labelled by an element of $J$.
The perimeter of its $2$-cells is at least
 $2m_s \ge 6$ and the thickness of its black edges is equal to $\val(s)$.
Denote by $\mathcal T _A$ its frontier.

The absence of a length $3$ circuit containing $s$ implies that for every $t \in J$ and 
$K \subset S$ with $\{s,t\} \subsetneqq K$,
the subgroup $\Gamma \!_K$  is infinite. From the previous description of the Davis complex $X$
we obtain that  
$X_A \setminus \mathcal T _A$ is an open subset of $X$.
Therefore $X _A$ decomposes $X$, since $X$ is simply connected.
With Corollaries \ref{corcut1}(1) and \ref{corpq} we get
$$p_{\neq 0}(X) \le p_{\neq 0}(X _A, \mathcal T _A) \le 1 + \frac{\log (\val (s) - 1)}{\log (m_s - 1)}.$$ 

(2). Let $s$ and $J$ be as in case (1). Consider the union of the $2$-cells of $X$ that
correspond to the family of edges $(s,t) \in L^{(1)}$ with $t \in J$. Let $\Xi$ be one of its 
connected components. It is a $2$-cell complex whose universal cover is an elementary
polygonal complex. We color its edges  black and white : the black edges
are those labelled by $s$ and the white ones those labelled by an element of $J$.
The perimeter of its $2$-cells is larger
than or equal to $2m_s \ge 10$ and the thickness of its black edges is equal to $\val(s)$.

Since $m_s \ge 5$ we see that for every $t \in J$ and 
$K \subset S$ with $\{s,t\} \subsetneqq K$,
the subgroup $\Gamma \!_K$  is infinite. Thus $\Xi$ minus the union of its white edges
is an open subset of $X$. We will construct an elementary
polygonal complex $Y \subset \Xi$ that decomposes $X$. The statement will then follow from
Corollaries \ref{corcut1}(1) and \ref{corpq} again.

The construction of $Y$ uses a variant of the method presented in the proof of
Proposition \ref{propappli}. We associate to every black edge $e \subset \Xi$ 
a \emph{rooted directed tree} 
defined by the following process :
\begin{itemize}
\item [(A)] Join by a directed segment the middle of $e$ to the center $O$ of every $2$-cell $c$ containing $e$,
\item [(B)] Connect by a directed segment the center $O$ to the middle of every black edge $e'$ of $c$ 
that is distinct from $e$ and from the two nearest ones,
\item [(C)] Restart the process with $e'$ and every $2$-cell distinct from $c$ that contains
$e'$.
\end{itemize}
Let $T_e$ be the resulting graph in $\Xi$. We claim that $T_e$ is a bi-Lipschitz
embedded tree in $X$. To see this we first modify the constant curvature of every $2$-cell of $\Xi$
so that their angles become equal to $\frac{3 \pi}{4}$. Since the original angles were
at least equal to $\pi - \frac{\pi}{m_s} \ge \frac{3 \pi}{4}$ and since 
$\frac{3 \pi}{4} + \frac{3 \pi}{4}+ \frac{\pi}{2} = 2 \pi$, the complex $X$ remains nonpositively
curved. 

Consider a directed path $\gamma \subset T_e$ as defined in the proof of Proposition \ref{propappli},
and let $c_1, ... , c_n \subset \Xi$ be
the $2$-cells successively met by $\gamma$.  Their union is not convex 
in $X$, but a slight modification is. Indeed, pick $i \in \{1, ... ,n-1\}$, let $x$ be one of the vertices
of the segment $c_i \cap c_{i+1}$, and let $y \in c_i \setminus c_{i+1}$, $z \in  c_{i+1} \setminus c_i$
be the vertices adjacent to $x$. Consider the geodesic simplex $\sigma _x \subset X$ whose
vertices are $x,y,z$. Since $X$ is non positively curved with unit length edges
and $\frac{3\pi}{4}$ angles, the angles of $\sigma _x$ at $y$ and $z$ are smaller
than $\frac{\pi}{4}$. Moreover item (B) implies that for two consecutive
segments $c_{i-1} \cap c_i$ and $c_i \cap c_{i+1}$ the associated simplices are disjoint
(they are separated by a black edge of $c_i$ at least). 
It follows from angle considerations that 
$$(\cup _i  c_i) \cup (\cup _x \sigma _x)$$
is a convex subset of $X$. 
The rest of the argument and of the construction is similar to the proof 
of Proposition \ref{propappli}. 
Here the $2$-cells of the associated elementary polygonal complex
$Y$ have perimeter larger than or equal to $2(m_s - 2)$ because of the second item of the construction
process.
\end{proof}

\medskip
\begin{example} \label{extetrahedron}
When a hyperbolic group boundary $\partial \Gamma$ satisfies the CLP one 
knows from Theorem \ref{Confdim}(2) that $p_{\neq 0}(\Gamma) = \Confdim (\partial \Gamma)$.
So Proposition \ref{propcox} yields an upper bound for the conformal dimension of 
CLP Coxeter boundaries.
For example consider the following hyperbolic Coxeter group 
$$ \Gamma = \langle s_1,\ldots,s_4\mid s_i^2=1,\;(s_is_j)^{m_{ij}}=1
\;\mbox{for}\;i\neq j\rangle\,,$$
where
the order $m_{ij}$ is finite for all $i\neq j$ and 
$\sum _{i \neq j} \frac{1}{m_{ij} } <  1$
for all $j \in \{1, ..., 4 \}$. The associated 
graph is the $1$-skeleton of the tetrahedron. 
The visual boundary is homeomorphic 
to the Sierpinski carpet, so its conformal dimension is larger than $1$ \cite{Mac}.
Moreover it admits the CLP \cite{BourK}. 
Define 
$$m = \max _{1 \le i \le 4} (~\min _{j \neq i} ~m_{ij} ).$$
If $m \ge 5$ then from Proposition \ref{propcox}(2) we get that
$$\Confdim (\partial \Gamma) \le 1 + \frac{\log 2}{\log (m - 3)}.$$ 
In particular if we choose an $i \in \{1, ..., 4 \}$, fix the orders
$\{m_{jk}\}_{j \neq i, k \neq i}$, and let
the $\{m_{ij}\}_{j \neq i}$ go to $+\infty$, the conformal dimension tends to $1$.
We obtain in such a way a family of Coxeter groups with Sierpinski carpet boundaries 
of different conformal dimensions, which all contain an isomorphic peripheral
subgroup, namely the subgroup generated by $\{s_j\}_{j \neq i}$. Existence of groups
with these properties was evoked in Example \ref{ex-marios_question}.
\end{example} 

\medskip
\begin{example} The previous examples are quite special because they
satisfy the CLP independently of the choice of the coefficients $m_{st}$.
In general the CLP for Coxeter group boundaries is very sensitive to the coefficients $m_{st}$.
The reason is the following: suppose that the graph of a hyperbolic
Coxeter group $\Gamma$ decomposes as $L = L_1 \cup L_2$
where $L_i$ ($i=1,2)$ is a \emph{flag} subgraph of $L$ (\emph{i.e.}\! every edge
of $L$ with both endpoints in $L_i$ belongs to $L_i$). Denote by $\Gamma \!_i$ the Coxeter
group with defining graph $L_i$, and assume that :
\begin{itemize}
\item There is a vertex $s \in L_1$ 
such that no edges issuing from $s$ belong to $L_2$.
\item The Coxeter group $\Gamma \!_2$ satisfies 
$\Confdim (\partial \Gamma \!_2) > 1$.
\end{itemize}
Then if the coefficients of the edges issuing from $s$ are large enough compared
to $\Confdim (\partial \Gamma\!_2)$,
we get from Proposition \ref{propcox}(2) and  Theorem \ref{Confdim}(1) :
$$p_{\neq 0}(\Gamma) < \Confdim (\partial \Gamma \!_2) \le \Confdim (\partial \Gamma),$$
and so, according to Theorem \ref{Confdim}(2), the CLP fails for $\partial \Gamma$. 

Concrete examples are provided, for instance, by Coxeter groups whose defining graphs are complete bipartite.
Let $L(k,\ell)$ be the full bipartite graph with $k$ black vertices and $\ell$ 
white vertices ($k\ge 3$, $\ell \ge 3$). 
When all the coefficients
$m_{st}$ are equal to the same integer $m \ge 3$ then it is known that the corresponding  
Coxeter group boundary admits the CLP \cite{BP1, BourK}. 
Now suppose  that the number $k$ of black vertices is at least equal to $4$.
Pick one black vertex and decompose $L(k,\ell)$ accordingly :  
$$L(k,\ell) = L(1, \ell) \cup L(k -1 ,\ell).$$
Next, choose the coefficients $m_{st} \ge 3$ arbitrarily on the edges of the second factor graph and 
let the coefficients of the first one
go to $+\infty$. Since we have $k-1 \ge 3$, the boundary of the second factor group is 
homeomorphic to the Menger sponge; thus its conformal dimension is larger than $1$ \cite{Mac}.
Therefore the above discussion applies and the CLP fails.
\end{example}


\bibliographystyle{alpha}
\bibliography{rellph1}

\noindent Marc Bourdon, Universit\'e Lille 1, D\'epartement de math\'ematiques, Bat. M2, 59655 Villeneuve d'Ascq,
France, bourdon@math.univ-lille1.fr 

\medskip

\noindent Bruce Kleiner, Courant Institute, New York University,
251 Mercer Street,
New York, N.Y. 10012-1185, USA, 
bkleiner@courant.nyu.edu 

\end{document}